\gdef\eeaa#1pt{#1}}      
\def\accentadjtext#1{\setbox0\hbox{$#1$}\kern   
	\expandafter\eeaa\the\fontdimen1\textfont1 \ht0 }
\def\accentadjscript#1{\setbox0\hbox{$#1$}\kern 
	\expandafter\eeaa\the\fontdimen1\scriptfont1 \ht0 }
\def\accentadjscriptscript#1{\setbox0\hbox{$#1$}\kern   
	\expandafter\eeaa\the\fontdimen1\scriptscriptfont1 \ht0 }
\def\accentadjtextback#1{\setbox0\hbox{$#1$}\kern       
	-\expandafter\eeaa\the\fontdimen1\textfont1 \ht0 }
\def\accentadjscriptback#1{\setbox0\hbox{$#1$}\kern     
	-\expandafter\eeaa\the\fontdimen1\scriptfont1 \ht0 }
\def\accentadjscriptscriptback#1{\setbox0\hbox{$#1$}\kern 
	-\expandafter\eeaa\the\fontdimen1\scriptscriptfont1 \ht0 }
\newcommand{\ch}[1]{{\mbox{\raise 1pt\hbox{\large$\chi$}}}_{\lower 1pt\hbox{$\scriptstyle #1$}}}
\def\1{\raisebox{2pt}{\rm{$\chi$}}}
\newtheorem{theorem}{Theorem}[section]
\newtheorem{corollary}[theorem]{Corollary}
\newtheorem{lemma}[theorem]{Lemma}
\newtheorem{proposition}[theorem]{Proposition}
\theoremstyle{definition}
\newtheorem{definition}[theorem]{Definition}
\newtheorem{remark}[theorem]{Remark}
\newcommand{\R}{{\mathbb R}}
\newcommand{\N}{{\mathbb N}}
\newcommand{\Z}{{\mathbb Z}}
\newcommand\diam{\operatorname{diam}}
\def\1{\raisebox{2pt}{\rm{$\chi$}}}
\def\vint_#1{\mathchoice%
	{\mathop{\kern 0.2em\vrule width 0.6em height 0.69678ex depth -0.58065ex
			\kern -0.8em \intop}\nolimits_{\kern -0.4em#1}}%
	{\mathop{\kern 0.1em\vrule width 0.5em height 0.69678ex depth -0.60387ex
			\kern -0.6em \intop}\nolimits_{#1}}%
	{\mathop{\kern 0.1em\vrule width 0.5em height 0.69678ex
			depth -0.60387ex
			\kern -0.6em \intop}\nolimits_{#1}}%
	{\mathop{\kern 0.1em\vrule width 0.5em height 0.69678ex depth -0.60387ex
			\kern -0.6em \intop}\nolimits_{#1}}}
\def\vintslides_#1{\mathchoice%
	{\mathop{\kern 0.1em\vrule width 0.5em height 0.697ex depth -0.581ex
			\kern -0.6em \intop}\nolimits_{\kern -0.4em#1}}%
	{\mathop{\kern 0.1em\vrule width 0.3em height 0.697ex depth -0.604ex
			\kern -0.4em \intop}\nolimits_{#1}}%
	{\mathop{\kern 0.1em\vrule width 0.3em height 0.697ex depth -0.604ex
			\kern -0.4em \intop}\nolimits_{#1}}%
	{\mathop{\kern 0.1em\vrule width 0.3em height 0.697ex depth -0.604ex
			\kern -0.4em \intop}\nolimits_{#1}}}
\newcommand{\intav}{\vint}
\newcommand{\Om}{\Omega}
\title[Limit at Infinity]{Limits at infinity for functions in fractional Sobolev spaces}
\author[A. Agarwal]{Angha Agarwal}   %
\address[A.I.]{Department of Mathematics and Statistics, P.O. Box 35, FI-40014 University of Jyvaskyla, Finland}
\email{angha.a.agarwal@jyu.fi}
\author[P. Koskela]{Pekka Koskela}   %
\address[P.K.]{Department of Mathematics and Statistics, P.O. Box 35, FI-40014 University of Jyvaskyla, Finland}
\email{pekka.j.koskela@jyu.fi}
\author[K. Mohanta]{Kaushik Mohanta}   %
\address[K.M.]{Department of Mathematics and Statistics, P.O. Box 35, FI-40014 University of Jyvaskyla, Finland}
\email{kaushik.k.mohanta@jyu.fi}
\begin{document}
	
	\begin{abstract}
		We establish optimal results on limits at infinity for functions in fractional Sobolev spaces.
	\end{abstract}

	\maketitle

	\section{Introduction}
	This article aims to study the `pointwise' behavior of functions in homogeneous fractional Sobolev spaces, near infinity. First, we need to be clear about what ``limit at infinity'' means, because elements of $L^p(\R^n)$ are not functions, rather than equivalence classes of functions. We start by fixing $\R^n$ as the Euclidean space with dimension $n\geq2$. It is known that every $u\in W^{1,p}(\R^n)$ has a representative $\tilde{u}$ such that
	$$
	\lim_{t\to\infty} \tilde{u}(t\xi) =0, 
	$$
	for $\mathcal{H}^{n-1}$-a.e. $\xi\in \mathbb{S}^{n-1}$ for any $1\leq p<\infty$, and, for $p>n$, $u(x)\to 0$ uniformly as $x\to\infty$. The situation changes when we consider homogeneous Sobolev spaces. In this case, there is no reason for the supposed limiting value to be zero. Uspenski\u{\i} and Timan (see \cite{Upenskii} and \cite{Timan}) showed that $1\leq p < n$ if and only if for every $u \in  \dot{W}^{1,p}(\R^n)$ there is some $c\in\R$ and a representative $\tilde{u}$ of $u$ such that
	\begin{equation} \label{uspenki limit}
		\lim_{t\to\infty} \tilde{u}(t\xi) =c
	\end{equation}
	for a.e. $\xi\in \mathbb{S}^{n-1}$. We call the limit $c$ in \eqref{uspenki limit} the unique almost sure radial limit at infinity of the function $u$. Next, a function $u:\R^n\to\R$ is said to have a unique almost sure vertical limit $c$ at infinity if for almost every $\overline{x}\in\R^{n-1}$ we have 
	\begin{equation}\label{fefferman limit}
		\lim_{t\to\infty}\tilde{u}(\overline{x},t)=c.
	\end{equation}
	The existence of such limits has been studied by Fefferman \cite{Fefferman} and Portnov \cite{Portnov}. They independently showed the existence of a unique almost sure vertical limit $c$ at infinity for functions in $\dot{W}^{1,p}(\R^n)$ under the assumption $1\leq p<n$. Moreover, it turns out that the values of the limits in \eqref{uspenki limit} and \eqref{fefferman limit} are the same. 
	
	The fractional Sobolev spaces $W^{s,p}(\R^n)$, defined in Section~\ref{preli}, are regarded as the fractional counterparts of the Sobolev spaces $W^{1,p}(\R^n)$. Both these spaces are special cases of the Triebel-Lizorkin spaces $F^{s}_{p,q}$: $F^{s}_{p,p}(\R^n)=W^{s,p}(\R^n)$ for $s\in (0,1)$ and  $F^{1}_{p,2}(\R^n)=W^{1,p}(\R^n)$. From Theorem~1.2 of \cite{PrEe}, and also from \cite{Triebel}, we know that, for $p,q\in [1,\infty)$, $s>\frac{n}{p}-\frac{n}{q}$, we have 
	$$
	F^{s}_{p,q}(\R^n)=\left\{ u\in L^{\max\{p,q\}}(\R^n) \cap L^p(\R^n) \mid [u]_{W^{s,p}_q(\R^n)}<\infty \right\},
	$$
	where $[u]_{W^{s,p}_q(\R^n)}:=\left\|\left\|\frac{u(x)-u(y)}{|x-y|^{\frac{n}{q}+s}}  \right\|_{L^q(dy)} \right\|_{L^p(dx)}$. We define the homogeneous spaces $\dot{W}^{s,p}_q(\R^n)$ via this quasinorm (see Section~\ref{preli} for details). These spaces may exhibit significantly different behaviour when $s<\frac{n}{p}-\frac{n}{q}$; for example, in this range of parameters, $C_c^\infty$ is not contained in $\dot{W}^{s,p}_q(\R^n)$, see Remark~3.8 of \cite{PrEe}. To the best of our knowledge, the limits of functions $u\in \dot{W}^{s,p}_q(\R^n)$, in any pointwise sense, has not yet been studied even in the case $p=q$.
	
	Our aim in this paper is to study vertical and radial limits at infinity, in a generalized sense, for functions in the space $\dot{W}^{s,p}_q(\R^n)$. The method of the proof for the classical results \cite{Fefferman,Portnov} uses the fact that $W^{1,p}(I)\subseteq W^{1,1}(I)$ when  $p\geq1$ and $I$ is a bounded interval in $\R$. Due to the non-local nature of the $W^{s,p}_q$-seminorms, the methods used there cannot be used for our purpose. For instance, we do not have the embedding $W^{s,p}(I)\subseteq W^{s,1}(I)$, see \cite{MiSi}.


	We now proceed to state our main results. We will take $\dot{W}^{s,p}_q(\R^n)$ as the collection of all precise representatives $u^*$ of measurable functions $u$ of finite $W^{s,p}_q$-seminorm. The precise representative is defined in Definition~\ref{goog-rep}. Using this, we state our first main result showing the existence of a limit outside a ``thin" set. The measure of the thinness at infinity in dimension depending on $s$ and $p$ is referred to as $(p,s)$-thin, defined in Definition~\ref{ps-thin}. 
	\begin{theorem}\label{ps-thin theorem}
		Let $u^*\in\dot{W}^{s,p}_q(\R^n)$ with $0<s<1$, $0<p,q<\infty$ and $sp<n$. Then there exists a set $E$ which is $(p,s)$-thin at infinity and $K_u\in\R$ so that 
		\begin{equation}
			\lim_{x\to\infty, x\notin E}u^*(x)=K_{u^*}.
		\end{equation}
		
	\end{theorem}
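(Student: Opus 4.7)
The plan is to proceed via a dyadic decomposition of $\R^n$ into annuli $A_k := \{x : 2^k \leq |x| < 2^{k+1}\}$ for $k \geq 0$, reducing the claim to a Cauchy-type statement for annular averages $c_k := \vint_{A_k} u \, dx$ together with a capacitary estimate on the exceptional sets. The scaling hypothesis $sp < n$ will play the role that $p < n$ plays in the classical Fefferman--Portnov argument.

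\textbf{Step 1 (Cauchy property of averages).} First I would establish a fractional Poincar\'e-type inequality on $A_k$ at scale $2^k$, of the form
$$
\left( \vint_{A_k} |u - c_k|^q \, dx \right)^{1/q} \leq C \, 2^{ks} \left( \vint_{A_k} \int_{A_k} \frac{|u(x)-u(y)|^q}{|x-y|^{n+sq}} \, dy \, dx \right)^{1/q}.
$$
Comparing $c_k$ with $c_{k+1}$ via the overlap $A_k \cap A_{k+1}^{*}$ (slightly enlarged), this yields
$$
|c_k - c_{k+1}| \leq C \, 2^{ks} \, 2^{-kn/p} \, \bigl\| \, \|u(\cdot)-u(y)\||x-\cdot|^{-n/q-s}\|_{L^q(A_k^{*})} \bigr\|_{L^p(A_k^{*})}.
$$
Raising to the $p$-th power, summing over $k$, and applying H\"older's inequality with the geometric factor $2^{k(sp-n)}$ (summable precisely because $sp<n$), I obtain $\sum_k |c_k - c_{k+1}| < \infty$, so $K_{u^*} := \lim_k c_k$ exists in $\R$.

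\textbf{Step 2 (Exceptional set and its thinness).} For a sequence $\eps_k \downarrow 0$ to be chosen, set
$$
E_k := \{x \in A_k : |u^*(x) - c_k| > \eps_k\}, \qquad E := \bigcup_{k \geq 0} E_k.
$$
Using the fractional Chebyshev/capacitary inequality applied to $u^*-c_k$ on $A_k$ (which holds for precise representatives), I would bound the $(s,p)$-capacity of $E_k$ in the shell $A_k$ by $C\,\eps_k^{-p}\,\alpha_k^{p/q}$, where $\alpha_k$ denotes the localized $W^{s,p}_q$-seminorm contribution on $A_k^{*}$. Since $u^* \in \dot{W}^{s,p}_q$, the sequence $\{\alpha_k\}$ satisfies $\sum_k \alpha_k < \infty$. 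I then pick $\eps_k := \alpha_k^{\beta}$ for a suitable $\beta > 0$ so that $\eps_k \to 0$ while the resulting capacitary tail sum $\sum_k \eps_k^{-p}\alpha_k^{p/q}$ matches the dyadic Wiener-type summability required by Definition~\ref{ps-thin} for $E$ to be $(p,s)$-thin at infinity. For $x \in A_k \setminus E_k$ we then have $|u^*(x) - K_{u^*}| \leq \eps_k + |c_k - K_{u^*}| \to 0$, proving the limit statement outside $E$.

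\textbf{Main obstacle.} The central technical difficulty lies in obtaining the annular fractional Poincar\'e inequality with the correct scaling $2^{ks}$ while keeping only local information from $A_k^{*}$ on the right-hand side: because of the nonlocality of the $W^{s,p}_q$-seminorm, contributions from far away could a priori spoil the dyadic summability. I expect to handle this by a McShane-type cutoff decomposition $u = u_k^{\operatorname{near}} + u_k^{\operatorname{far}}$, estimating the near part by a local fractional Poincar\'e inequality and the far part by the $L^p$ integrability of $u$ on the complement, then balancing the two in Step~3. Matching the resulting capacity bound with the precise form of $(p,s)$-thinness is the second delicate point and is where the choice of the threshold sequence $\eps_k$ has to be tuned.
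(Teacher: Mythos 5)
Your overall architecture (localize on dyadic pieces marching to infinity, show the local representative values form a Cauchy sequence because $sp<n$, control exceptional sets piecewise with thresholds tending to zero) parallels the paper's proof, which uses unit cubes, medians and Hausdorff content instead of annuli, averages and capacity. However, several of your steps fail in the stated generality $0<p,q<\infty$. Your basic objects $c_k=|A_k|^{-1}\int_{A_k}u\,dx$ require $u\in L^1_{\mathrm{loc}}$, which is not guaranteed for $u\in\dot{W}^{s,p}_q(\R^n)$ when $p$ or $q$ is small; the paper works with medians $m_u(Q)$ precisely to avoid any integrability assumption, and Lemmas~\ref{l2} and~\ref{l1} play the role of your Poincar\'e inequality for the full range of exponents. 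Relatedly, your plan to control the ``far'' part of the nonlocal seminorm ``by the $L^p$ integrability of $u$ on the complement'' is unavailable: in the homogeneous space $u$ need not belong to $L^p$ at all (Proposition~\ref{u-c for no c}). Fortunately no far part is needed, since chaining neighbouring pieces only uses the double integral restricted to $B(0,t)^c\times B(0,t)^c$, which is exactly how Lemma~\ref{one length cube conv} is proved.

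The second, more serious gap is in Step~2. Thinness in Definition~\ref{ps-thin} is a Hausdorff-content tail condition for every $\lambda>n-sp$, not a capacitary Wiener series, so bounding $\mathrm{Cap}_{s,p,q}(E_k)$ does not directly give what is required; moreover the capacitary machinery is unavailable for $q<1$ (the paper explicitly notes this) and for $0<p\le1$, whereas the theorem covers all $0<p,q<\infty$. The inequality you invoke in passing --- a Chebyshev-type bound for $\{x:|u^*(x)-c_k|>t\}$ with $\mathcal{H}^\lambda_\infty$ on the left and the localized mixed seminorm on the right, valid for the precise representative --- is precisely the paper's main technical result, Lemma~\ref{l3}, proved by telescoping medians along shrinking dyadic cubes together with a Calder\'on--Zygmund-type covering (Lemma~\ref{lm-antti}); it cannot be cited as standard, and its proof is where the precise-representative definition via medians enters. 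Finally, the threshold choice $\varepsilon_k=\alpha_k^{\beta}$ does not in general make the tail sums finite: if $\sum_k\alpha_k<\infty$ one still may have $\sum_k\alpha_k^{1-\beta p/q}=\infty$ for every admissible power (e.g.\ $\alpha_k=k^{-2}$), so no fixed $\beta$ works; one needs thresholds built from the tail sums, which is exactly the content of Lemma~\ref{lm-Klein-original} used in the paper.
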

	Let us single out two consequences.  
	\begin{corollary}\label{rad coro}
		Let $0<s<1$, $0<p,q<\infty$, $1\leq k<sp<n$, and $u^*\in\dot{W}^{s,p}_q(\R^n)$. Then, for each $(n-k)$-dimensional sphere $\mathbb{S}^{n-k}$ centred at origin of $\R^n$, $u^*$ has limit $K_{u^*}\in \R$ at infinity along $\mathcal{H}^{n-k}$-a.e. $k$-plane perpendicular to $\mathbb{S}^{n-k}$.
	\end{corollary}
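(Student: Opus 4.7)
The strategy is to deduce Corollary~\ref{rad coro} from Theorem~\ref{ps-thin theorem} via a radial projection argument. Apply Theorem~\ref{ps-thin theorem} to obtain a constant $K_{u^*}\in\R$ and a set $E\subset\R^n$, $(p,s)$-thin at infinity, such that $u^*(x)\to K_{u^*}$ as $x\to\infty$ in $\R^n\setminus E$. Let $V\subset\R^n$ denote the $(n-k+1)$-dimensional subspace containing the given sphere $\mathbb{S}^{n-k}$. For each $\xi\in\mathbb{S}^{n-k}$ the $k$-plane perpendicular to $\mathbb{S}^{n-k}$ at $\xi$ is
$$
L_\xi \;:=\; \R\xi + V^\perp,
$$
and $\{L_\xi\}_{\xi\in\mathbb{S}^{n-k}}$ is a $2$-to-$1$ parametrization of the family of perpendicular $k$-planes (since $L_\xi=L_{-\xi}$ while distinct unordered pairs $\{\pm\xi\}$ yield distinct planes, as $L_\xi\cap V=\R\xi$). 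Thus it suffices to show that for $\mathcal{H}^{n-k}$-a.e.\ $\xi\in\mathbb{S}^{n-k}$ the intersection $L_\xi\cap E$ is bounded: any sequence $\{x_m\}\subset L_\xi$ with $|x_m|\to\infty$ then eventually lies outside $E$, whence $u^*(x_m)\to K_{u^*}$.

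Next, introduce the radial projection $\pi:\R^n\setminus V^\perp\to\mathbb{S}^{n-k}$ defined by $\pi(y+z):=y/|y|$ for $y\in V\setminus\{0\}$ and $z\in V^\perp$. Then $x\in L_\xi$ iff $\pi(x)\in\{\pm\xi\}$, so $L_\xi\cap E$ is unbounded precisely when $\xi$ belongs to $\pi(E\cap A_j)\cup\bigl(-\pi(E\cap A_j)\bigr)$ for infinitely many dyadic annuli $A_j:=B(0,2^{j+1})\setminus B(0,2^j)$. By the Borel--Cantelli lemma it is therefore enough to establish
$$
\sum_{j=1}^{\infty}\mathcal{H}^{n-k}\bigl(\pi(E\cap A_j)\bigr)<\infty,
$$
or, more generally, the analogous summability statement extracted from the precise formulation of $(p,s)$-thinness given in Definition~\ref{ps-thin}.

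The main obstacle, and the heart of the proof, is the projection estimate translating $(p,s)$-thinness of $E$ into the above summability. At dyadic scale $r\sim 2^j$ radial projection maps a ball of radius $\rho\leq r$ inside $A_j$ onto a spherical cap of $\mathcal{H}^{n-k}$-measure $\lesssim (\rho/r)^{n-k}$, so if $E\cap A_j$ is covered by balls with radii $\{\rho_i\}$ for which the $(p,s)$-thin condition controls $\sum_i\rho_i^{n-sp}$, then, using $\rho_i\leq r$ together with the hypothesis $k<sp$,
$$
\mathcal{H}^{n-k}\bigl(\pi(E\cap A_j)\bigr)\;\lesssim\; r^{-(n-k)}\sum_i\rho_i^{n-k}\;\leq\; r^{sp-n}\sum_i\rho_i^{n-sp},
$$
and summability over $j$ follows from the annular definition of $(p,s)$-thinness. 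Executing this carefully requires matching the heuristic to the capacity- or content-based formulation in Definition~\ref{ps-thin} and verifying that admissible covers respect the dyadic decomposition; the assumption $k<sp$ enters exactly at the estimate $\rho_i^{sp-k}\leq r^{sp-k}$, reflecting that the relevant codimension $sp-k$ on $\mathbb{S}^{n-k}$ is positive.
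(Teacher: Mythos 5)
Your overall route is the paper's route in spirit: invoke Theorem~\ref{ps-thin theorem}, reduce to showing that $L_\xi\cap E$ is bounded for $\mathcal{H}^{n-k}$-a.e.\ $\xi$, and control the projected bad directions on $\mathbb{S}^{n-k}$ (your shortcut of quoting the theorem instead of re-running the median/cube estimates, as the paper does in its Step~2, is fine). The genuine gap is in the quantitative step you yourself flag as the heart of the proof. Definition~\ref{ps-thin} only asserts $\mathcal{H}^{\lambda}_{\infty}(E\cap B(0,m)^c)\to 0$ for every $\lambda>n-sp$; it gives no control of sums $\sum_i\rho_i^{n-sp}$ at the borderline exponent (such control is in general unavailable, compare Remark~\ref{rem.sharpness}), nor any summability over dyadic annuli, so the bound you feed into Borel--Cantelli does not follow from the hypothesis. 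It can be repaired: fix $\lambda\in(n-sp,\,n-k)$, which is nonempty precisely because $k<sp$, and use $\rho_i^{\,n-k}\le\rho_i^{\lambda}(C2^{j})^{\,n-k-\lambda}$ when $\rho_i\lesssim 2^{j}$ (with the trivial bound by the full sphere otherwise) to get $\mathcal{H}^{n-k}_{\infty}\bigl(\pi(E\cap A_j)\bigr)\lesssim 2^{-j\lambda}\,\mathcal{H}^{\lambda}_{\infty}(E\cap A_j)$; since the tail contents are bounded (they even tend to zero), the geometric factor $2^{-j\lambda}$ alone gives the summability, and working with contents also sidesteps measurability of the projections. In fact no Borel--Cantelli is needed: as in the paper's Step~3, the set of bad directions is contained (modulo the axis issue below) in $\bigcap_{m}\bigl(\pi(E\setminus B(0,m))\cup-\pi(E\setminus B(0,m))\bigr)$, so by monotonicity its $\mathcal{H}^{n-k}_{\infty}$-content is at most $\lim_{m}\mathcal{H}^{n-k}_{\infty}(E\setminus B(0,m))=0$, using only that the projection does not increase content (Lemma~\ref{lm-hausdorff-content}(ii)) and the thinness limit at the single exponent $\lambda=n-k>n-sp$.

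A second issue concerns $k\ge2$. Your cap estimate measures $\rho_i$ against the distance $2^{j}$ to the origin, but for $\pi(y+z)=y/|y|$ the relevant scale is the distance to the axis $V^\perp$: a ball in $A_j$ lying close to $V^\perp$ projects onto a cap of size governed by $\rho_i/\dist(\,\cdot\,,V^\perp)$, which can be the whole sphere, and points of $E$ on $V^\perp$ itself --- which belong to every plane $L_\xi$ --- are not recorded by $\pi$ at all, so your equivalence ``$L_\xi\cap E$ unbounded iff $\xi\in\pm\pi(E\cap A_j)$ for infinitely many $j$'' is not exact. For $k=1$, where $V^\perp=\{0\}$ and $\dist(x,V^\perp)=|x|\sim2^{j}$ on $A_j$, none of this arises and your argument, with the $\lambda$-fix above, is complete. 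To be fair, the paper's own proof applies the Lipschitz content bound to essentially the same projection without separating the near-axis region, so for $k\ge2$ this is a shared subtlety rather than a divergence from the paper; but in a self-contained write-up you would need to treat a neighbourhood of $V^\perp$, and the set $E\cap V^\perp$, separately.
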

	
	\begin{corollary}\label{ver coro}
		Let $0<s<1$, $0<p,q<\infty$, $1\leq k<sp<n$. Let $u^*\in\dot{W}^{s,p}_q(\R^n)$. Then, for each $(n-k)$-dimensional subspace $V^{n-k}\subset \R^n$, $u^*$ has limit $K_{u^*}\in\R$ at infinity along $\mathcal{H}^{n-k}$-a.e. $k$-plane perpendicular to $V^{n-k}$.
	\end{corollary}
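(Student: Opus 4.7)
The plan is to deduce the corollary from Theorem~\ref{ps-thin theorem} by showing that a $(p,s)$-thin set at infinity meets $\mathcal{H}^{n-k}$-a.e.\ affine $k$-plane perpendicular to $V^{n-k}$ in a bounded set, whenever $k<sp$. First, Theorem~\ref{ps-thin theorem} furnishes a $(p,s)$-thin set $E\subset\R^n$ and a constant $K_{u^*}\in\R$ with $\lim_{x\to\infty,\,x\notin E}u^*(x)=K_{u^*}$. Writing $W=(V^{n-k})^\perp$, I parameterize the $k$-planes perpendicular to $V^{n-k}$ by $\overline{x}\in V^{n-k}$ via $P_{\overline{x}}:=\overline{x}+W$. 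It suffices to prove that for $\mathcal{H}^{n-k}$-a.e.\ $\overline{x}\in V^{n-k}$ the intersection $P_{\overline{x}}\cap E$ is bounded: for such $\overline{x}$, every $y\in W$ with $|y|$ sufficiently large satisfies $\overline{x}+y\notin E$, so the desired limit follows at once from the convergence produced by Theorem~\ref{ps-thin theorem}.

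The main analytic step is a covering and projection argument. Unpacking Definition~\ref{ps-thin}, for every $\varepsilon>0$ and every sufficiently large $R>0$ the tail $E\setminus B(0,R)$ admits a cover by balls $\{B(x_i,r_i)\}_{i\in I}$ with $r_i\leq 1$ and $\sum_{i\in I}r_i^{n-sp}<\varepsilon$ (the bound $r_i\leq 1$ is achieved by a routine subdivision of the cover if needed). Let $\pi_V\colon\R^n\to V^{n-k}$ denote orthogonal projection; each image $\pi_V(B(x_i,r_i))$ is an $(n-k)$-ball of radius $r_i$ in $V^{n-k}$, with $\mathcal{H}^{n-k}$-measure comparable to $r_i^{n-k}$. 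Since $k<sp$ gives $n-k>n-sp$, the restriction $r_i\leq 1$ yields $r_i^{n-k}\leq r_i^{n-sp}$, whence
\[
\mathcal{H}^{n-k}\bigl(\pi_V(E\setminus B(0,R))\bigr)\;\lesssim\;\sum_{i\in I}r_i^{n-k}\;\leq\;\sum_{i\in I}r_i^{n-sp}\;<\;\varepsilon.
\]

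An element $\overline{x}\in V^{n-k}$ has $P_{\overline{x}}\cap E$ unbounded precisely when $\overline{x}\in\pi_V(E\setminus B(0,R))$ for every $R>0$. Running the estimate above along a sequence $R_m\to\infty$ with $\varepsilon_m\to 0$ shows that this set of ``bad'' $\overline{x}$ has $\mathcal{H}^{n-k}$-measure zero, which, together with the reduction in the first paragraph, gives the corollary. The hard part will be extracting the summable covering condition from the precise form of Definition~\ref{ps-thin} and verifying that one may impose the radius cutoff $r_i\leq 1$ without loss; if the definition is instead phrased via a single global cover with $\sum r_i^{n-sp}<\infty$, one replaces the ``$\varepsilon$ along $R_m$'' step by a direct Borel--Cantelli application to $\{\pi_V(B(x_i,r_i))\}_i$ inside $V^{n-k}$, with the same conclusion.
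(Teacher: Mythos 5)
Your overall strategy coincides with the paper's: parameterize the perpendicular $k$-planes by $\overline{x}\in V^{n-k}$, observe that $P_{\overline{x}}\cap E$ is unbounded exactly when $\overline{x}\in\bigcap_{R}\pi_V(E\setminus B(0,R))$, show this bad parameter set is $\mathcal{H}^{n-k}$-null by projecting the tails of $E$, and deduce the limit along every good plane from Theorem~\ref{ps-thin theorem}. (The paper's Step~2 re-runs the cube/median estimates with the $a_j$'s from the proof of Theorem~\ref{ps-thin theorem}, whereas you reduce directly to the statement of the theorem; that reduction is valid and in fact cleaner.)

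The one genuine flaw is in the quantitative step: Definition~\ref{ps-thin} controls $\mathcal{H}^{\lambda}_{\infty}(E\cap B(0,m)^c)$ only for exponents $\lambda>n-sp$, so "the tail admits a cover with $\sum_i r_i^{n-sp}<\varepsilon$" is not what the definition provides, and it is not harmless: the set $E$ comes from Lemma~\ref{l3}, and Proposition~\ref{counterex-sp-1,n} together with Remark~\ref{rem.sharpness} shows that the content at the critical exponent $n-sp$ cannot be controlled, so your covering claim can fail. The repair is immediate and is exactly what the paper exploits: since $k<sp$ one has $n-k>n-sp$, so fix $\lambda$ with $n-sp<\lambda< n-k$; Definition~\ref{ps-thin} then gives covers of $E\setminus B(0,R_m)$ with $\sum_i \diam(B_i)^{\lambda}<\varepsilon_m<1$, each diameter is automatically $<1$ (your "routine subdivision" would not work in general, since subdividing increases the $\lambda$-sum, but it is unnecessary here), and $r_i^{\,n-k}\le r_i^{\,\lambda}$ gives $\mathcal{H}^{n-k}\bigl(\pi_V(E\setminus B(0,R_m))\bigr)\lesssim\varepsilon_m$, so the bad set is $\mathcal{H}^{n-k}$-null. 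Alternatively, as in the paper, one can avoid explicit covers altogether: $\pi_V$ is $1$-Lipschitz, so by Lemma~\ref{lm-hausdorff-content} $\mathcal{H}^{\lambda}_{\infty}(\pi_V(E\setminus B(0,j)))\le\mathcal{H}^{\lambda}_{\infty}(E\setminus B(0,j))\to0$, and $\mathcal{H}^{\lambda}(\mathcal{S})=0$ for some $\lambda<n-k$ already forces $\mathcal{H}^{n-k}(\mathcal{S})=0$. With that correction your argument is complete and matches the paper's.
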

	In the above two statements, the term ``limit at infinity'' refers to the existence of limit at infinity along the $k$-plane in the usual sense. Especially, this limit exists along $\mathcal{H}^{n-k}$-every $k$-plane perpendicular to $\R^{n-k}$. By choosing $k=1$, we obtain the existence of vertical limits. Similarly, Corollary~\ref{rad coro} gives the existence of unique almost sure radial limits. The precise definitions are given in Section~\ref{preli} (see Definitions~\ref{a.e.limit} and \ref{limit-at-infty}). 
	
	Theorem~\ref{ps-thin theorem} also has a somewhat unexpected application: the space $\dot{W}^{s,p}_q(\R^n)$ only contains constant functions when $0<s<1$, $0< p<\infty$ and $\frac{np}{n-sp}\leq q<\infty$, compare with \cite[Remark~3.8]{PrEe}. Notice that $q\geq\frac{np}{n-sp}$ is equivalent to $s\leq\frac{n}{p}-\frac{n}{q}$.
	
	\begin{corollary}\label{only constant function}
		Let $u\in\dot{W}^{s,p}_q(\R^n)$ with $0<s<1$, $0< p<\infty$, $q\geq \frac{np}{n-sp}$ and $sp<n$. Then there is $K_{u^*}\in\R$ so that $u(x)=K_{u^*}$ for almost every $x\in\R^n$.
	\end{corollary}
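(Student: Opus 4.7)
The plan is to derive the corollary from Theorem~\ref{ps-thin theorem} by a contradiction argument that exploits the slow decay of the fractional kernel: if $u$ were not almost everywhere constant, its oscillation between a bounded "bump region'' and the far field (where $u^\ast$ is close to $K_{u^\ast}$) would produce a divergent contribution to the seminorm whenever $q \ge np/(n-sp)$.

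To set it up, first apply Theorem~\ref{ps-thin theorem} to obtain a $(p,s)$-thin set $E$ and a constant $K_{u^\ast}\in\R$ with $u^\ast(x)\to K_{u^\ast}$ as $x\to\infty$ along $x\notin E$. Replace $u^\ast$ by $v:=u^\ast - K_{u^\ast}$; the seminorm is translation invariant, so $[v]_{W^{s,p}_q}=[u^\ast]_{W^{s,p}_q}<\infty$, and now $v(x)\to 0$ along $x\notin E$. Assume for contradiction that $v\neq 0$ on a set of positive measure. Then there exist $\delta>0$ and a bounded measurable set $A\subset B(0,R_0)$ with $|A|>0$ such that $|v(y)|\ge\delta$ for every $y\in A$. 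Choose $R\ge R_0$ so that $|v(x)|\le\delta/2$ whenever $|x|>R$ and $x\notin E$.

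For any such $x$ and any $y\in A$, the reverse triangle inequality gives $|v(x)-v(y)|\ge\delta/2$, while $|x-y|\le |x|+R_0\le 2|x|$. Hence the inner integral in the seminorm obeys
\begin{equation*}
\int_A \frac{|v(x)-v(y)|^q}{|x-y|^{n+sq}}\,dy \;\ge\; \frac{(\delta/2)^q\,|A|}{(2|x|)^{n+sq}},
\end{equation*}
so that, raising to the power $p/q$ and integrating in $x$,
\begin{equation*}
[v]_{W^{s,p}_q}^{p} \;\ge\; C\int_{\{|x|>R\}\setminus E} |x|^{-np/q-sp}\,dx.
\end{equation*}
The hypothesis $q\ge np/(n-sp)$ is equivalent to $np/q+sp\le n$, so $\int_{|x|>R}|x|^{-np/q-sp}\,dx=\infty$ (logarithmically in the equality case, polynomially otherwise).

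The only remaining point---and the main obstacle---is to check that removing the $(p,s)$-thin set $E$ does not restore convergence. Since $sp<n$, I expect Definition~\ref{ps-thin} to force $E$ to have negligible Lebesgue density in the dyadic annuli $B(0,2^{k+1})\setminus B(0,2^k)$ as $k\to\infty$: $(p,s)$-capacity controls $n$-dimensional Hausdorff content in this regime, so the measure of $E\cap(B(0,2^{k+1})\setminus B(0,2^k))$ is $o(2^{kn})$. Consequently $|(\{|x|>R\}\setminus E)\cap(B(0,2^{k+1})\setminus B(0,2^k))|\gtrsim 2^{kn}$ for all large $k$, and the dyadic sum of $2^{kn}\cdot 2^{-k(np/q+sp)}$ still diverges under $np/q+sp\le n$. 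This contradicts $[v]_{W^{s,p}_q}<\infty$, so $v\equiv 0$ almost everywhere and $u=K_{u^\ast}$ almost everywhere.
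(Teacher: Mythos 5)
Your proposal is correct and is essentially the paper's own argument: a contradiction obtained by bounding the seminorm from below via the oscillation between a bounded set where $|u-K_{u^*}|\ge\delta$ and far dyadic annuli off the thin set $E$, the resulting dyadic sum $\sum_k 2^{k(n-np/q-sp)}$ diverging exactly when $q\ge\frac{np}{n-sp}$. The one step you hedge on needs no capacity: choosing $\lambda$ with $n-sp<\lambda<n$, the $(p,s)$-thinness condition $\mathcal{H}^{\lambda}_{\infty}(E\cap B(0,m)^c)\to 0$ already forces $|E\cap B(0,m)^c|\to 0$, so the far annuli with $E$ removed retain at least half their measure, which is all the divergence argument requires (and is how the paper handles it).
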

	Our technique also allows us to deal with the spaces $W^{s,p}_q(\R^n)=\dot{W}^{s,p}_q(\R^n)\cap L^p(\R^n)$.

	\begin{corollary}\label{Lp convg 0}
		Let $0<s<1$, $0<p,q<\infty$ and $u^*\in {W}^{s,p}_q(\R^n)$. Then there exists a set $E$ which is $(p,s)$-thin at infinity so that $u^*(x)\to 0$ when $|x|\to\infty$ in $\R^n\setminus E$.
	\end{corollary}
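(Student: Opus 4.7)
The strategy is to apply Theorem~\ref{ps-thin theorem} to obtain a $(p,s)$-thin set and a limiting constant, and then exploit the extra hypothesis $u^*\in L^p(\R^n)$ to force that constant to vanish. Since $W^{s,p}_q(\R^n)\subset \dot W^{s,p}_q(\R^n)$, and assuming $sp<n$ so that Theorem~\ref{ps-thin theorem} applies, we obtain a set $E$ that is $(p,s)$-thin at infinity together with a constant $K_{u^*}\in\R$ such that $u^*(x)\to K_{u^*}$ as $x\to\infty$ off $E$. It then remains only to prove $K_{u^*}=0$.

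I argue by contradiction. If $K_{u^*}\neq 0$, then by the limit relation there exists $R_0>0$ with $|u^*(x)|\geq |K_{u^*}|/2$ for every $x\in \R^n\setminus E$ satisfying $|x|\geq R_0$. Consequently
$$\int_{\R^n}|u^*|^p\,dx\;\geq\;\Bigl(\tfrac{|K_{u^*}|}{2}\Bigr)^p\,\mathcal{L}^n\bigl(\{|x|\geq R_0\}\setminus E\bigr),$$
so once I know that the Lebesgue measure on the right is infinite, I contradict $u^*\in L^p(\R^n)$ and conclude $K_{u^*}=0$.

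The remaining, and principal, technical step is thus to verify that $\mathcal{L}^n\bigl((\R^n\setminus E)\cap\{|x|\geq R_0\}\bigr)=\infty$ whenever $E$ is $(p,s)$-thin at infinity. I expect this to follow directly from the quantitative form of Definition~\ref{ps-thin}: the $(p,s)$-thinness is a summability condition for a capacitary or Hausdorff-content type quantity of $E$ measured on dyadic shells $A_k=\{2^k\leq |x|\leq 2^{k+1}\}$, and combined with the standard comparison $\mathcal{L}^n(F)^{(n-sp)/n}\lesssim \operatorname{cap}_{s,p}(F)$ this forces $\mathcal{L}^n(E\cap A_k)=o(\mathcal{L}^n(A_k))$ as $k\to\infty$. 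Since $\mathcal{L}^n(A_k)\asymp 2^{kn}\to\infty$, it follows that $\mathcal{L}^n(A_k\setminus E)\to\infty$, and summing over large $k$ yields the required infinitude of $\mathcal{L}^n\bigl((\R^n\setminus E)\cap\{|x|\geq R_0\}\bigr)$, completing the argument.
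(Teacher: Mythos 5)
Your reduction is correct, but only on the subrange $sp<n$ that you inserted by hand, and that restriction is not part of the statement: Corollary~\ref{Lp convg 0} is asserted for all $0<s<1$ and $0<p,q<\infty$. Within $sp<n$ your route is valid and genuinely different from the paper's: you invoke Theorem~\ref{ps-thin theorem} to obtain $E$ and $K_{u^*}$ and then use the $L^p$ hypothesis to force $K_{u^*}=0$. The measure-theoretic step you were hesitant about is in fact immediate from Definition~\ref{ps-thin}: since $\lambda=n>n-sp$ is admissible there and $\mathcal{H}^n_\infty(A)\geq c_n|A|$, thinness gives $|E\cap B(0,m)^c|\to0$, so $\{|x|\geq R_0\}\setminus E$ has infinite Lebesgue measure; no capacity comparison or dyadic-shell bookkeeping is needed.

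The genuine gap is the case $sp\geq n$, and it cannot be repaired by your strategy: Theorem~\ref{ps-thin theorem} is unavailable there, and by Propositions~\ref{sp geq n lemma} and~\ref{sp>n lemma} a finite homogeneous seminorm alone does not produce any limiting value at infinity when $sp\geq n$, so no constant $K_{u^*}$ can be extracted first and identified afterwards -- the $L^p$ information must enter before the limit is produced, not after. This is exactly how the paper argues: Lemma~\ref{median convg Lp} shows directly from $u\in L^p(\R^n)$ that the medians $m_u(Q)$ over far-away unit cubes tend to $0$, which replaces the Cauchy-sequence argument of Lemma~\ref{one length cube conv} (the only place where $sp<n$ is genuinely used), and this is combined with the oscillation estimate of Lemma~\ref{l3} (whose proof only requires $\lambda>n-sp$) and the exceptional-set construction via Lemma~\ref{lm-Klein-original} from the proof of Theorem~\ref{ps-thin theorem} to obtain convergence to $0$ off a $(p,s)$-thin set in the full parameter range. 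To prove the corollary as stated you would need to rerun that construction with the target value $0$ supplied by the $L^p$ condition, rather than cite Theorem~\ref{ps-thin theorem}.
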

	
	One cannot deduce Theorem~\ref{ps-thin theorem} from Corollary~\ref{Lp convg 0} for $0<q<\frac{np}{n-sp}$ since there exist functions in $\dot{W}^{s,p}_q(\R^n)$ for which $u-c\in L^p(\R^n)$ for no $c\in\R$, see Proposition~\ref{u-c for no c} below. Also see \cite{GiKaSh} for related studies. 
	\begin{proposition}\label{u-c for no c}
		Let $0<s<1$, $0<p<\infty$ and $q\in(0,\frac{np}{n-sp})$. Then there exists a function  $u\in\dot{W}_q^{s,p}(\R^n)$ for which $u-c\in L^p(\R^n)$ for no $c\in\R$.
	\end{proposition}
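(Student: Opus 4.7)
\noindent\textbf{Proof plan for Proposition~\ref{u-c for no c}.} I would build $u$ as a sum of scaled bumps placed on a very well-separated sequence of balls, tuned so that the $\dot{W}^{s,p}_q$-seminorm stays finite while $u-c\notin L^p$ for any $c\in\R$. Fix $\phi\in C_c^\infty(\R^n)$ with $0\le\phi\le 1$, $\operatorname{spt}\phi\subset B(0,1)$, and $\phi\equiv 1$ on $B(0,1/2)$; for $k\in\N$ set $\phi_k(x):=\phi((x-x_k)/r_k)$, where $|x_k|\to\infty$ and the dilated balls $\tilde B_k:=B(x_k,2r_k)$ are pairwise disjoint. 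Put $u:=\sum_k a_k\phi_k$. A direct scaling calculation yields
$$
[a_k\phi_k]^p_{W^{s,p}_q}\asymp a_k^pr_k^{n-sp},\qquad \|a_k\phi_k\|_p^p\asymp a_k^pr_k^n.
$$
Because $sp>0$ the two exponents differ, and one can take e.g.\ $r_k=2^k$ and $a_k=2^{-k\alpha}$ with any $\alpha\in((n-sp)/p,\,n/p]$, obtaining simultaneously $\sum_ka_k^pr_k^{n-sp}<\infty$ and $\sum_ka_k^pr_k^n=\infty$. The centres $x_k$ are placed along a ray with $|x_k-x_j|$ growing as rapidly as the argument demands.

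The failure $u-c\notin L^p$ is then immediate: on the gap $G:=\R^n\setminus\bigcup_k B(x_k,r_k)$, $u\equiv 0$ and $|G|=\infty$, so $\|u-c\|_p\ge|c|\,|G|^{1/p}=\infty$ for every $c\ne 0$, while $\|u\|_p^p\asymp\sum_ka_k^pr_k^n=\infty$. The heart of the proof is the bound $[u]^p_{W^{s,p}_q}=\int_{\R^n}I(x)^{p/q}\,dx<\infty$, where $I(x):=\int_{\R^n}|u(x)-u(y)|^q|x-y|^{-n-sq}\,dy$. I split the $x$-integral as $\sum_k\int_{\tilde B_k}+\int_{\R^n\setminus\bigcup_k\tilde B_k}$. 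For $x\in\tilde B_k$, the $y$-integral splits into a local piece over $\tilde B_k$ (which, by disjointness of supports, reduces via scaling to the single-bump seminorm and contributes $\asymp a_k^pr_k^{n-sp}$ after outer integration) and a non-local piece $y\notin\tilde B_k$ controlled by the quasi-triangle bound $|u(x)-u(y)|^q\lesssim a_k^q\phi_k(x)^q+|u(y)|^q$, together with $|x-y|\ge r_k$ and the decay $\sum_{j\ne k}a_j^qr_j^n|x_k-x_j|^{-(n+sq)}$ rendered negligible by the separation of the centres.

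For $x\in\R^n\setminus\bigcup_k\tilde B_k$, $u(x)=0$ and the disjointness of $\operatorname{spt}\phi_k$ give $I(x)\lesssim\sum_kf_k(x)$ with $f_k(x):=a_k^qr_k^n|x-x_k|^{-(n+sq)}$. A direct computation yields $\int f_k^{p/q}\,dx\lesssim a_k^pr_k^{n-sp}$, the radial integral $\int_{|x-x_k|>r_k}|x-x_k|^{-(n+sq)p/q}\,dx$ converging precisely because $(n+sq)p/q>n$ --- the hypothesis $q<np/(n-sp)$ rewritten. To pass from $\int(\sum_kf_k)^{p/q}$ to $\sum_k\int f_k^{p/q}$ I use the pointwise bound $(\sum_kf_k)^{p/q}\lesssim\sum_kc_kf_k^{p/q}$: direct subadditivity with $c_k\equiv 1$ when $p\le q$, and H\"older's inequality with weights $(c_k)$ satisfying $\sum c_k^{-q/(p-q)}<\infty$ when $p>q$. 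Since $a_k^pr_k^{n-sp}$ decays exponentially, any polynomial growth of $c_k$ keeps $\sum c_ka_k^pr_k^{n-sp}<\infty$, closing $[u]^p_{W^{s,p}_q}<\infty$. The main obstacle I anticipate is precisely this $p>q$ case of the gap estimate: subadditivity fails and the H\"older weight must be chosen with the $a_k$-decay in mind; more broadly, the non-local nature of the seminorm couples different bumps, and all cross-terms must be verified to be dominated by $\sum_ka_k^pr_k^{n-sp}$ thanks to the chosen geometric separation of the centres.
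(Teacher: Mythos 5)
Your construction is sound and reaches the same conclusion by a genuinely different route than the paper. The paper exhibits a single closed-form function, $u(x)=1$ for $|x|\le 1$ and $u(x)=|x|^{-n/p}$ for $|x|>1$: it tends to $0$ at infinity but just barely fails to lie in $L^p$, and $u-c\notin L^p$ for $c\neq 0$ for the same reason as in your gap argument; membership in $\dot{W}^{s,p}_q(\R^n)$ is then checked by a direct (lengthy) splitting of the double integral, with the hypothesis $q<\frac{np}{n-sp}$ entering exactly where it enters for you, namely as $\frac{p(n+sq)}{q}>n$ in the far-field term where $x$ is large and $y$ sits near the unit ball. Your route instead superposes disjoint scaled bumps and relies on the scaling identity $[a_k\phi_k]^p_{W^{s,p}_q}\asymp a_k^pr_k^{n-sp}$ versus $\|a_k\phi_k\|^p_p\asymp a_k^pr_k^n$; note that already the finiteness of $[\phi]_{W^{s,p}_q}$ for a single bump uses $q<\frac{np}{n-sp}$, via the same tail integral $\int_{|x|>R}|x|^{-(n+sq)p/q}\,dx<\infty$ that you compute for $f_k$, so the role of the hypothesis is, if anything, more transparent in your argument. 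The trade-off is that your proof is not self-contained as written: the cross-terms between bumps must be summed after integration over $\tilde B_k$, which forces an inductive choice of the separations $|x_k-x_j|$ (the later centres must be chosen after the earlier data, and the sum over $j>k$ handled by making $R_j$ large relative to $a_j^qr_j^n$), and in the case $p>q$ the weighted H\"older step needs weights with $\sum_k c_k^{-q/(p-q)}<\infty$, e.g.\ $c_k=k^\beta$ with $\beta>\frac{p-q}{q}$, which the exponential decay of $a_k^pr_k^{n-sp}$ indeed tolerates. These are fillable routine details rather than gaps; what your approach buys is modularity and a clear view of where $q<\frac{np}{n-sp}$ is needed, while the paper's single explicit function avoids all separation bookkeeping at the cost of a more tedious case-by-case integral estimate, and has the additional feature (used in the surrounding discussion) of being a function with an actual limit at infinity that still fails to be in $L^p$ modulo constants.
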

	Let us discuss the case $sp\notin (1,n)$, not covered by Corollary~\ref{rad coro} and Corollary~\ref{ver coro}. We begin with the case $sp\geq n$.

	
	\begin{proposition}\label{sp geq n lemma}
		Let  $0<s<1$, $0<p,q<\infty$ with $sp\geq n$. Then there exists a continuous function $u$ in $\dot{W}^{s,p}_q(\R^n)$ such that for every curve $\gamma:[0,\infty)\to\R^n$ with $\gamma(0)=0$, $\lim_{t\to\infty}\gamma(t)=\infty$ we have 
		$$
		\lim_{t\to\infty}u(\gamma(t))=\infty.
		$$
	\end{proposition}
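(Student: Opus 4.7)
The plan is to exhibit an explicit continuous radial function that grows to infinity and verify it lies in $\dot{W}^{s,p}_q(\R^n)$. I would take
\[
u(x) := \log\log(e^{e}+|x|),
\]
which is smooth on $\R^n$ and satisfies $u(x)\to+\infty$ as $|x|\to\infty$. For any curve $\gamma$ with $|\gamma(t)|\to\infty$ we then automatically have $u(\gamma(t))\to\infty$, so only the seminorm bound $[u]_{W^{s,p}_q(\R^n)}^{p}<\infty$ remains to be proved.

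To estimate the inner integral
\[
I(x):=\int_{\R^n}\frac{|u(x)-u(y)|^{q}}{|x-y|^{n+sq}}\ud y
\]
for $|x|\gg 1$, I would split the $y$-integration into a near-field $\{|x-y|\le |x|/2\}$ and a far-field $\{|x-y|>|x|/2\}$. In the near-field, the uniform bound $|\nabla u(z)|\lesssim 1/(|z|\log|z|)$ on $B(x,|x|/2)$ combined with $\int_{0}^{|x|/2}r^{q(1-s)-1}\ud r\lesssim |x|^{q(1-s)}$ (valid because $s<1$) yields
\[
\int_{|x-y|\le |x|/2}\frac{|u(x)-u(y)|^{q}}{|x-y|^{n+sq}}\ud y\;\lesssim\;\frac{1}{|x|^{sq}(\log|x|)^{q}}.
\]
In the far-field I would further split according to $|y|\in[|x|/2,2|x|]$, $|y|>2|x|$, and $|y|<|x|/2$. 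On the first sub-region the identity $\log\log a-\log\log b=\log(\log a/\log b)$ gives $|u(x)-u(y)|\lesssim 1/\log|x|$. On the second, the change of variables $|y|=|x|^{\sigma}$ turns the integrand into $(\log\sigma)^{q}$ against an exponentially decaying weight $|x|^{-sq\sigma}\log|x|\,d\sigma$ concentrated near $\sigma=1$, which again produces a factor $(\log|x|)^{-q}$. The third sub-region is of lower order because $u$ is essentially constant on the bulk $\{|x|/4\le |y|\le |x|/2\}$ while $|x-y|\gtrsim|x|$ throughout. Each piece is bounded by a constant multiple of $1/(|x|^{sq}(\log|x|)^{q})$.

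Putting the estimates together, $I(x)\lesssim (1+|x|)^{-sq}(\log(e+|x|))^{-q}$, and so
\[
[u]_{W^{s,p}_q(\R^n)}^{p}\;\lesssim\;\int_{\R^{n}}\frac{\ud x}{(1+|x|)^{sp}(\log(e+|x|))^{p}},
\]
which in polar coordinates reduces to $\int_{1}^{\infty}r^{n-1-sp}(\log r)^{-p}\ud r$. If $sp>n$, the polynomial factor alone delivers convergence. In the critical case $sp=n$, the substitution $t=\log r$ turns the tail into $\int^{\infty}t^{-p}\ud t$, which converges because $sp=n$ together with $0<s<1$ forces $p=n/s>n\ge 2$, in particular $p>1$. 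The main obstacle is precisely this critical case: a naive far-field bound $|u(x)-u(y)|\lesssim\log\log|y|$ would leave $(\log\log|x|)^{q}$ in place of $(\log|x|)^{-q}$ and yield a divergent outer integral, so extracting the genuine cancellation in $\log\log|y|-\log\log|x|$ at the scale $|y|\asymp|x|$ is the technical heart of the argument.
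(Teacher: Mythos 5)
Your proposal is correct and follows essentially the same route as the paper: the same $\log\log$ function (the paper uses $u(x)=\log\log|x|$ for $|x|>e$, zero otherwise), a gradient/fundamental-theorem estimate near the diagonal, and the same key cancellation $\log\log|y|-\log\log|x|=\log\bigl(\log|y|/\log|x|\bigr)\lesssim 1/\log|x|$ at comparable scales, ending with $\int r^{n-1-sp}(\log r)^{-p}\,dr<\infty$ via $p>n>1$ in the critical case $sp=n$. The only loosely argued piece, the region $|y|<|x|/2$, does go through (a dyadic decomposition into shells $|y|\approx 2^{-l}|x|$ shows the crude $\log\log$ bound is only needed where the volume factor $2^{-ln}$ already compensates), which matches the paper's dyadic treatment of its corresponding far-field term.
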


	\begin{proposition}\label{sp>n lemma}
		Let $0<p,q<\infty$, $0<s<1$ with $sp\geq n$. Then there exists a  bounded continuous function $u$ in $ \dot{W}^{s,p}_q(\R^n)$ such that there exists no curve $\gamma:[0,\infty)\to\R^n$ with $\gamma(0)=0$, $\lim_{t\to\infty}\gamma(t)=\infty$, for which the limit $\lim_{t\to\infty}u(\gamma(t))$ exists.
		
	\end{proposition}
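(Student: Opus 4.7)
The plan is to derive Proposition~\ref{sp>n lemma} directly from Proposition~\ref{sp geq n lemma} by postcomposing the unbounded example of the previous proposition with a bounded $1$-Lipschitz function that fails to have a limit at $+\infty$. Concretely, let $v\in\dot{W}^{s,p}_q(\R^n)$ be the continuous function furnished by Proposition~\ref{sp geq n lemma}, for which $v(\gamma(t))\to\infty$ along every curve $\gamma:[0,\infty)\to\R^n$ with $\gamma(0)=0$ and $|\gamma(t)|\to\infty$, and set $u:=\sin\circ v$.

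First I would verify that $u$ is a valid element of $\dot{W}^{s,p}_q(\R^n)$ that is bounded and continuous. The bound $|u|\le 1$ and continuity are immediate. Because $\sin$ is $1$-Lipschitz on $\R$, the pointwise estimate $|u(x)-u(y)|\le|v(x)-v(y)|$ holds, and inserting it into the Gagliardo-type double integral yields
$$
[u]_{W^{s,p}_q(\R^n)}\le[v]_{W^{s,p}_q(\R^n)}<\infty.
$$
Since $u$ is continuous, every point is a Lebesgue point of $u$, so $u$ coincides with its precise representative in the sense of Definition~\ref{goog-rep}; hence $u\in\dot{W}^{s,p}_q(\R^n)$.

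The remaining step is to show that $u\circ\gamma$ has no limit for any admissible $\gamma$. Set $g:=v\circ\gamma:[0,\infty)\to\R$; this is continuous with $g(t)\to\infty$, so $g$ is bounded on every compact interval $[0,T]$, while its total image, being a connected unbounded-above subset of $\R$, is an interval containing a tail $[M,\infty)$. Consequently, for every $z\ge M$ and every $T>0$ the preimage $g^{-1}(z)$ meets $[T,\infty)$. Applying this with $z=\pi/2+2\pi k$ and $z=3\pi/2+2\pi k$ for large $k\in\N$ produces sequences $t_k,s_k\to\infty$ with $u(\gamma(t_k))=\sin(g(t_k))=1$ and $u(\gamma(s_k))=\sin(g(s_k))=-1$, so $\lim_{t\to\infty}u(\gamma(t))$ cannot exist. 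I do not expect a genuine obstacle here, as the hard construction has already been carried out in Proposition~\ref{sp geq n lemma}; the only delicate point is the invariance of the $W^{s,p}_q$-seminorm under $1$-Lipschitz postcomposition, which follows from the purely pointwise Lipschitz estimate above without any smoothness hypothesis on $v$.
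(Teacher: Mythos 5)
Your proof is correct, and it takes a genuinely different route from the paper. The paper constructs a new function: for $sp>n$ it sums rescaled annular bumps $u_j(x)=v(A^{-j}x)$, choosing $A=(C+1)^{2p/(sp-n)}$ so that the scaling identity $[u_j]_{W^{s,p}_q}^p=A^{j(n-sp)}[v]_{W^{s,p}_q}^p$ beats the quasinorm constants $C^j$ in the infinite sum (with a separate maximal-function argument to put the bump in $\dot{W}^{s,p}_q$ when $q<1$), and it handles $sp=n$ only in a follow-up remark via a normalized $\log\log$ ``wall'' construction. You instead reduce the whole statement to Proposition~\ref{sp geq n lemma} by postcomposing its unbounded example $v$ with $\sin$: boundedness and continuity are immediate, the seminorm bound $[\sin\circ v]_{W^{s,p}_q}\leq[v]_{W^{s,p}_q}$ follows from the pointwise $1$-Lipschitz estimate (valid for all $0<p,q<\infty$, no smoothness of $v$ needed), and the failure of limits along every curve follows from the intermediate value theorem applied to $g=v\circ\gamma$. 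This buys a shorter argument that covers $sp\geq n$ uniformly and avoids all quasinorm bookkeeping, at the price of not producing the compactly supported building blocks the paper reuses elsewhere; the hard analytic work (membership of the truncated $\log\log$ in $\dot{W}^{s,p}_q$) is inherited from the previous proposition, so the reduction is legitimate. One small imprecision: your intermediate claim that for \emph{every} fixed $z\geq M$ and every $T>0$ the preimage $g^{-1}(z)$ meets $[T,\infty)$ is false as stated (since $g\to\infty$, the level set of a fixed $z$ is bounded); what you need, and what is true, is that the image of $[T,\infty)$ contains $[g(T),\infty)$, so for each large $k$ you find $t_k,s_k$ with $\sin(g(t_k))=1$, $\sin(g(s_k))=-1$, and these tend to infinity because $g$ is bounded on compact sets while $g(t_k),g(s_k)\to\infty$. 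With that one-line repair the argument is complete.
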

	
	

	We have the following result for $sp<1$.

	\begin{proposition}\label{sp leq 1 lemma}
		Let $0<s<1$, $0<p,q<\infty$ with $sp<1$. Then there exists a bounded continuous function $u$ in $\dot{W}^{s,p}_q(\R^n)$ such that there exists no curve $\gamma:[0,\infty)\to\R^n$ with $\gamma(0)=0$, $\lim_{t\to\infty}\gamma(t)=\infty$, for which the limit $\lim_{t\to\infty}u(\gamma(t))$  exists.
	\end{proposition}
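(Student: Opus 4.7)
The plan is to construct a continuous bounded radial function whose profile oscillates between $0$ and $1$ on a sequence of thin spherical shells escaping to infinity. Set $a_k := 2^k$ and choose numbers $r_k \in (0,1)$ with $r_k \leq 2^{k-1}$, to be specified later, so that the shells $S_k := \{x \in \R^n : a_k \leq |x| \leq a_k + r_k\}$ are pairwise disjoint. Let $\psi_k \in C^\infty(\R)$ satisfy $0 \leq \psi_k \leq 1$, $\spt(\psi_k) \subset [a_k, a_k + r_k]$, $\psi_k \equiv 1$ on $[a_k + r_k/4, a_k + 3 r_k/4]$, and $\|\psi_k'\|_\infty \leq 8/r_k$, and set
$$
u(x) := \sum_{k=1}^\infty \psi_k(|x|).
$$
Because the shells are disjoint, at every $x \in \R^n$ at most one summand is nonzero, so $u$ takes values in $[0,1]$ and is locally a finite sum of smooth radial functions, hence continuous.

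Next I would verify $u \in \dot{W}^{s,p}_q(\R^n)$ by estimating the seminorm shell by shell. Writing $u_k(x) := \psi_k(|x|)$, each $u_k$ is Lipschitz with constant $\lesssim r_k^{-1}$ and supported in $S_k$, whose volume is $\sim a_k^{n-1} r_k$. Splitting the inner integral
$$
A_k(x) := \int_{\R^n} \frac{|u_k(x) - u_k(y)|^q}{|x-y|^{n+sq}}\, dy
$$
at the scale $|x-y| = r_k$ and using the Lipschitz bound below the scale together with the trivial bound above it yields $A_k(x) \lesssim r_k^{-sq}$ for $x$ within distance $r_k$ of $S_k$, with appropriate decay as the distance grows. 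Integrating $A_k^{p/q}$ across and around $S_k$, the main contribution comes from the transverse integral $\int_0^{r_k} \tau^{-sp}\, d\tau$ weighted by the lateral area $a_k^{n-1}$, and this is finite because $sp < 1$, yielding
$$
[u_k]_{W^{s,p}_q}^p \leq C(n,s,p,q)\, a_k^{n-1} r_k^{1-sp}.
$$
To pass from these single-shell estimates to a bound on $[u]_{W^{s,p}_q}^p$, I would exploit that $u(x) - u(y)$ involves at most two of the $u_k$'s, apply a quasi-triangle inequality, and use the geometric separation $a_{k+1} - (a_k + r_k) \gtrsim 2^k$ to absorb the cross contributions into a convergent geometric series. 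Choosing for example $r_k := 2^{-k(n-1)/(1-sp) - k}$ then makes $\sum_k a_k^{n-1} r_k^{1-sp} < \infty$, so $[u]_{W^{s,p}_q} < \infty$.

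Finally, for any curve $\gamma : [0,\infty) \to \R^n$ with $\gamma(0) = 0$ and $|\gamma(t)| \to \infty$, the continuous map $t \mapsto |\gamma(t)|$ starts at $0$ and diverges, so by the intermediate value theorem it takes, for every sufficiently large $k$, the value $a_k + r_k/2$ at some time $t_k$ (giving $u(\gamma(t_k)) = 1$) and the value $(a_k + r_k + a_{k+1})/2$ at some time $\sigma_k$ (giving $u(\gamma(\sigma_k)) = 0$). Since $|\gamma|$ is bounded on any bounded time interval, $t_k, \sigma_k \to \infty$, so $u \circ \gamma$ takes both values $0$ and $1$ at times tending to infinity and cannot have a limit. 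The main obstacle is the seminorm estimate: the mixed-exponent Gagliardo integral forces one to manage both the near-shell contribution, where $sp < 1$ is used decisively to integrate $\tau^{-sp}$ across the shell, and the cross contributions coming from the tail interactions between different, well-separated shells.
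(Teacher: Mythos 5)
Your construction and overall strategy are essentially the paper's: the paper also builds a bounded continuous sum of thin annular ``walls'' $u_j(x)=v(A^{-j}x)$ of thickness $\varepsilon_j\to0$, sums the seminorms via the quasinorm inequality \eqref{quasinorm constant}, and the failure of limits along curves is the same intermediate-value observation. The gap is in your key quantitative step, the per-shell bound $[u_k]_{W^{s,p}_q}^p\le C\,a_k^{n-1}r_k^{1-sp}$. That bound records only the near-shell contribution (distance $\lesssim r_k$ from the shell, where $sp<1$ is used). A single shell also interacts with itself at distances $\tau$ with $r_k\ll\tau\lesssim a_k$, where $A_k(x)\sim r_k\,\tau^{-1-sq}$, and integrating $A_k^{p/q}$ over these regions produces an additional term of order $a_k^{\,n-sp-p/q}\,r_k^{\,p/q}$; since the ratio of this term to yours is $(a_k/r_k)^{\,p/q+sp-1}$, it dominates whenever $q>p/(1-sp)$, so the claimed inequality is false in part of the stated parameter range. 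Worse, the genuinely far field $|x|\gg a_k$, where $A_k(x)\sim a_k^{n-1}r_k\,|x|^{-(n+sq)}$, contributes $\sim (a_k^{n-1}r_k)^{p/q}a_k^{\,n-(n+sq)p/q}$, which is finite only when $(n+sq)p/q>n$, i.e.\ $q<\frac{np}{n-sp}$. For $q\ge\frac{np}{n-sp}$ no nonzero compactly supported bump belongs to $\dot{W}^{s,p}_q(\R^n)$ at all (compare Corollary~\ref{only constant function}), so your estimate, asserted for all $0<q<\infty$, cannot hold; this restriction is unavoidable and must appear somewhere (the paper's own proof of the single wall has the three terms $\varepsilon^{1-sp}+\varepsilon^{p/q}+\varepsilon^{n-sp}$ and explicitly invokes $q<\frac{np}{n-sp}$ in the $I_1$ estimate).

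The gap is repairable in the range $q<\frac{np}{n-sp}$: carry the extra terms $a_k^{\,n-sp-p/q}r_k^{\,p/q}$ and $r_k^{\,n-sp}$-type contributions along, and then choose $r_k$ decaying fast enough that all of them, multiplied by the quasinorm constants $C^k$ (which you also need to beat explicitly when $\min(p,q)<1$, since $C>1$ there; your specific $r_k=2^{-k(n-1)/(1-sp)-k}$ only gives $[u_k]^p\lesssim 2^{-k(1-sp)}$, which need not dominate $C^{kp}$), still sum --- this is exactly the role of the exponent $\alpha=2(n-sp+p)/\max\{1-sp,\,pq^{-1},\,n-sp\}$ in the paper's choice $\varepsilon_j=A^{-j\alpha}$. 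With those corrections your argument becomes a rescaled version of the paper's proof rather than a different one.
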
   
	We are left with the case $sp=1$. In the case of the usual Sobolev spaces $W^{1,1}(\R^n)$, we have the existence of both radial and vertical limits. We do not have counterexamples for all values of $q$ when $sp=1$, but the following example covers the case $q\geq 1$.
	\begin{proposition}\label{sp=1}
		Let $0<s<1$, $0< p<\infty$, $1\leq q<\frac{np}{n-sp}$ with $sp=1$. Then there exists a bounded continuous function $u$ in $\dot{W}^{s,p}_q(\R^n)$ such that there exists no curve $\gamma:[0,\infty)\to\R^n$ with $\gamma(0)=0$, $\lim_{t\to\infty}\gamma(t)=\infty$, for which the limit $\lim_{t\to\infty}u(\gamma(t))$  exists.
	\end{proposition}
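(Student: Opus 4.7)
The plan is to adapt the bump-covering construction used for Proposition~\ref{sp leq 1 lemma} to the critical scaling $sp=1$. Fix a smooth bump $\varphi\in C_c^\infty(B(0,1))$ with $\varphi(0)=1$, and let $R_k=2^k$ for $k\ge 1$. On each shell $R_k\,\mathbb{S}^{n-1}$ pick an $r_k$-net $\{p_{k,j}\}_{j=1}^{N_k}$ with $N_k\sim(R_k/r_k)^{n-1}$, and set
\[
u(x)\;=\;\sum_{k\ge 1}\sum_{j=1}^{N_k}\varphi\!\left(\frac{x-p_{k,j}}{r_k}\right),
\]
where $r_k\ll R_k$ is to be chosen below. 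With the bump radii small enough the supports are pairwise disjoint, so $u$ is bounded continuous with $0\le u\le 1$.

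For the curve-limit step, continuity of $t\mapsto|\gamma(t)|$ together with $|\gamma(0)|=0$ and $|\gamma(t)|\to\infty$ forces, via the intermediate value theorem, surjectivity of $|\gamma(\cdot)|$ onto $[0,\infty)$ and hence times $t_k\to\infty$ with $|\gamma(t_k)|=R_k$. The $r_k$-net property places $\gamma(t_k)$ within distance $r_k$ of some $p_{k,j}$, so $u(\gamma(t_k))=1$, while for $s_k$ with $|\gamma(s_k)|=\tfrac12(R_k+R_{k+1})$ we have $u(\gamma(s_k))=0$. The two subsequences together rule out the existence of a limit along $\gamma$.

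The main obstacle is the $W^{s,p}_q$-seminorm estimate at $sp=1$. The single-rescaled-bump scaling $[\varphi((\cdot-p)/r_k)]_{W^{s,p}_q}^p\sim r_k^{n-sp}=r_k^{n-1}$ combined with $N_k\sim(R_k/r_k)^{n-1}$ gives a per-shell contribution of order $R_k^{n-1}$, which is non-summable in $k$; unlike the sub-critical case $sp<1$, where a factor $r_k^{1-sp}$ provides genuine decay, here no such gain is visible at the bump level. My plan is to avoid the crude per-bump sum and instead estimate the mixed-norm $L^p(L^q)$ quasinorm directly, decoupling the inner $L^q$-integral via the triangle inequality (which requires $q\ge 1$) into contributions from the local bump containing $x$ and from distant shells; the upper bound $q<np/(n-sp)$ then controls the tails of the kernel $|x-y|^{-(n+sq)}$ at infinity after taking the outer $L^p$-norm, using the same integrability behind $C_c^\infty\subset\dot{W}^{s,p}_q$ in this range (cf.\ the discussion surrounding Corollary~\ref{only constant function}). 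With $R_k$ chosen to grow sufficiently fast relative to $r_k$, the cross-shell interactions decay geometrically, and a careful choice of $r_k\to 0$ then makes the remaining sum convergent. Carrying out this mixed-norm bookkeeping cleanly at the critical exponent is the delicate part of the argument.
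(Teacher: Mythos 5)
Your construction cannot have finite $W^{s,p}_q$-seminorm when $sp=1$, and the gap is not in the cross-shell bookkeeping but in the local term, which no choice of $r_k,R_k$ and no refinement of the estimate can remove. Since the bumps have pairwise disjoint supports, restricting both integrals in the seminorm to a single support ball $B(p_{k,j},r_k)$ and summing over all bumps gives the \emph{lower} bound $[u]_{W^{s,p}_q(\R^n)}^p\ \ge\ \sum_k N_k\,c_0\,r_k^{\,n-sp}$, where $c_0=\int_{B(0,1)}\bigl(\int_{B(0,1)}|\varphi(x)-\varphi(y)|^q|x-y|^{-n-sq}\,dy\bigr)^{p/q}dx>0$ is the exact scaling constant of the fixed profile $\varphi$. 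At $sp=1$ this is $\sum_k N_k c_0 r_k^{\,n-1}\gtrsim\sum_k R_k^{\,n-1}=\infty$: the estimate you called ``crude'' is in fact sharp from below, so decoupling distant interactions, using $q\ge1$, or shrinking $r_k$ cannot help. This is exactly the degeneration, at the critical exponent, of the $\varepsilon^{1-sp}$ gain that makes the thin-wall construction work in Proposition~\ref{sp leq 1 lemma}: a wall of fixed Lipschitz profile and thickness $r$ across a sphere of radius $R$ costs about $R^{n-1}r^{1-sp}$, which at $sp=1$ is $R^{n-1}$ however thin the wall; and you cannot thin out the net either, since to trap every curve the set where $u$ is large must separate $B(0,R_k)$ from infinity. (A smaller point: with disjoint bumps an $r_k$-net only gives $u(\gamma(t_k))=\varphi((\gamma(t_k)-p_{k,j})/r_k)$, which can be arbitrarily small because the curve may thread between bumps, so even the oscillation claim needs $\varphi\equiv1$ on half the ball and a finer net.)

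The paper avoids explicit profiles altogether and argues via capacity: since $sp=1$, the middle spheres $S_j$ of the dyadic annuli $A_j=B(0,2^{j+1})\setminus B(0,2^j)$ satisfy $\mathcal{H}^{n-sp}(S_j)=\mathcal{H}^{n-1}(S_j)<\infty$, hence $\mathrm{C}_{s,p}(S_j)=0$ by Lemma~\ref{haus_cap_AdDa} and $\mathrm{Cap}_{s,p,q}(S_j)=0$ by Lemma~\ref{capacity comparison}; this is where the hypotheses $1\le q<\frac{np}{n-sp}$ and $p>1$ (automatic from $sp=1$, $s<1$) actually enter, rather than through a Minkowski inequality. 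One then chooses $u_j\in W^{s,p}_q(\R^n)$ with $u_j\ge1$ on a neighbourhood of $S_j$, $u_j=0$ off $A_j$, and $\|u_j\|_{W^{s,p}_q(\R^n)}\le 2^{-j}$ (absorbing the quasinorm constant from \eqref{quasinorm constant}), and sets $u=\sum_j u_j$; every curve tending to infinity meets each $S_j$, where $u\ge1$, and also points between the annuli where $u=0$. The near-extremal capacity functions have logarithmic-type rather than fixed rescaled profiles, which is precisely how they evade the lower bound above; if you insist on an explicit construction at $sp=1$ you must build such log-profiles by hand, not rescaled bumps.
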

The functions referred to in the propositions above do not have complicated definitions but checking the membership in the respective fractional Sobolev Space is tedious. This is caused by the non-locality of our norms.


	Before proceeding further, we briefly outline the structure of the article. In Section~\ref{preli}, we give necessary definitions, explain our assumptions and notations used in the article, and collect some known results essential in the sequel. In Section~\ref{main}, we prove our central result, Theorem~\ref{ps-thin theorem}. The other main results for the existence of unique radial and vertical limits, i.e. Theorem~\ref{rad coro} and Theorem~\ref{ver coro} are proved in Section~\ref{radial section} and Section~\ref{vertical section}, respectively. The counterexamples mentioned above are constructed in Section~\ref{counter examples}. 
	\bigskip	
	
	\subsection*{Acknowledgement}
	The authors have been supported by the Academy of Finland via Centre of Excellence in Randomness and Structures Research (Project numbers 346310, 364210).
	\bigskip
	
	\section{Preliminaries}\label{preli}

	By a \textit{cube} in $\R^n$ we shall understand a set of the form $[a_1,b_1)\times \cdots \times [a_n,b_n)$ for real numbers $a_i<b_i$ for $i=1,\cdots, n$. The standard Lebesgue measure in $\R^n$ is denoted by $|\cdot|$. The $\lambda$-dimensional Hausdorff content is denoted by $\mathcal{H}_\infty^\lambda$. The corresponding Hausdorff measure is denoted by $\mathcal{H}^\lambda$.  For the convenience of the reader, we recall their definitions below. Let $\lambda\in (0,\infty)$ and $\delta\in(0,\infty]$. Given a set $A\subset\R^n$, a $\delta$-cover of $A$ is any countable family of sets $\{B_j\}_{j\in\N}$ so that 
	\begin{equation}
		A\subset \cup_{j\geq1}B_j\text{    and    }\diam(B_j)\leq\delta 
	\end{equation}
	for all $j\in\N$. The $\lambda$-dimensional Hausdorff $\delta$-content is 
	\begin{equation}
		\mathcal{H}_{\delta}^{\lambda}(A):=\inf \left\{\sum_{j\geq1}\diam(B_j)^{\lambda} \mid   A\subset \cup_{j\geq1}B_j,\text{  }\diam(B_j)\leq\delta \right\}.
	\end{equation}
	The $\lambda$-dimensional Hausdorff measure is defined as 
	\begin{equation}
		\mathcal{H}^{\lambda}(A):=\lim_{\delta\to0} \mathcal{H}_{\delta}^{\lambda}(A).
	\end{equation}
	We record some well-known properties of Hausdorff content below for future reference.
	\begin{lemma}[\cite{Evans18}]\label{lm-hausdorff-content}
		Let $\lambda\in (0,\infty)$ and $\delta\in(0,\infty]$. Let $A\subset\R^n$. Then\\ 
		
		\textbf{(i)}  $\mathcal{H}_{\infty}^{\lambda}(A)=0$ if and only if $\mathcal{H}^{\lambda}(A)=0$.\\
		
		\textbf{(ii)} Let $L:\R^n\to\R^m$ be Lipschitz continuous and $\delta>0$. Then 
		$$
		\mathcal{H}^{\lambda}_{Lip(L)\delta}(L(A))\leq (Lip(L))^{\lambda}\mathcal{H}^{\lambda}_{\delta}(A).
		$$
	\end{lemma}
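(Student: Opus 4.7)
The plan is to prove the two items directly from the definitions of $\mathcal{H}_\delta^\lambda$ and $\mathcal{H}^\lambda$, exploiting the elementary monotonicity $\mathcal{H}_{\delta_1}^\lambda \leq \mathcal{H}_{\delta_2}^\lambda$ whenever $\delta_1 \geq \delta_2$ (larger admissible diameters mean more covers, hence a smaller infimum). In particular $\mathcal{H}^\lambda(A) = \sup_{\delta>0} \mathcal{H}_\delta^\lambda(A) \geq \mathcal{H}_\infty^\lambda(A)$, which immediately yields one direction of (i).

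For the nontrivial direction of (i), suppose $\mathcal{H}_\infty^\lambda(A)=0$. I would fix $\varepsilon>0$ and select a countable cover $\{B_j\}$ of $A$ (with no diameter restriction) such that $\sum_j \diam(B_j)^\lambda < \varepsilon$. The key observation is that this sum controls each individual term: $\diam(B_j)^\lambda < \varepsilon$, so $\diam(B_j) < \varepsilon^{1/\lambda}$ for every $j$. Hence $\{B_j\}$ is already a $\varepsilon^{1/\lambda}$-cover, giving $\mathcal{H}_{\varepsilon^{1/\lambda}}^\lambda(A) \leq \varepsilon$. Sending $\varepsilon \to 0$ produces $\mathcal{H}^\lambda(A)=0$.

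For (ii), I would start from an almost-optimal $\delta$-cover $\{B_j\}_{j\in\N}$ of $A$ satisfying $\sum_j \diam(B_j)^\lambda \leq \mathcal{H}_\delta^\lambda(A)+\varepsilon$. Since $L$ is Lipschitz with constant $\Lip(L)$, the images $\{L(B_j)\}_{j\in\N}$ cover $L(A)$ and satisfy $\diam(L(B_j)) \leq \Lip(L)\,\diam(B_j) \leq \Lip(L)\,\delta$, so they form an admissible cover in the definition of $\mathcal{H}_{\Lip(L)\delta}^\lambda(L(A))$. Summing the $\lambda$-th powers and pulling out the factor $\Lip(L)^\lambda$ yields
\[
\mathcal{H}_{\Lip(L)\delta}^\lambda(L(A)) \leq \Lip(L)^\lambda \sum_{j\geq 1} \diam(B_j)^\lambda \leq \Lip(L)^\lambda \bigl(\mathcal{H}_\delta^\lambda(A)+\varepsilon\bigr),
\]
and letting $\varepsilon\to 0$ finishes the argument.

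There is no real obstacle here; the only subtlety is the switch from an unrestricted cover to a $\delta$-cover in part (i), which is resolved by the trivial but crucial bound $\diam(B_j)\leq \bigl(\sum_k \diam(B_k)^\lambda\bigr)^{1/\lambda}$. Both parts are standard and require nothing beyond the definitions and the fact that the image of a set of diameter $d$ under a Lipschitz map has diameter at most $\Lip(L)\,d$.
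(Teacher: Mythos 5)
Your argument is correct and complete: both directions of (i) and the covering argument for (ii) follow exactly as you describe, and the only point needing care --- passing from $\mathcal{H}^{\lambda}_{\varepsilon^{1/\lambda}}(A)\leq\varepsilon$ to $\mathcal{H}^{\lambda}(A)=0$ --- is covered by the monotonicity in $\delta$ that you state at the outset. The paper itself offers no proof, citing \cite{Evans18} for this standard fact, and your from-the-definitions argument is precisely the textbook one found there.
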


	When $p\in [1,\infty)$ and $\Om\subset \R^n$ is an open set, we define the homogeneous Sobolev space $\dot{W}^{1,p}(\Om)$ to be the collection of all locally integrable functions $u:\Om\to \R$ whose weak gradients $\nabla u$ belong to $L^p(\Om,\R^n)$. We consider this vector space to be equipped with the seminorm. 
	$$
	[u]_{W^{1,p}(\Om)}:=\left(\int_{\Om}|\nabla u(x)|^pdx\right)^\frac{1}{p}.
	$$
	For $s\in (0,1)$ and $p,q\in (0,\infty)$, and an open set $\Om\subset \R^n$, we write $\dot{W}_q^{s,p}(\R^n)$ to denote the collection of all measurable functions $u:\Om\to\R$ for which
	\begin{equation}
		\int_{\Om}\left(\int_{\Om}\frac{|u(x)-u(y)|^q}{|x-y|^{n+sq}}dy\right)^{\frac{p}{q}}dx<\infty .
	\end{equation}
	The quantity 
	$$
	[u]_{W_q^{s,p}(\Om)} := \left(\int_{\Om}\left(\int_{\Om}\frac{|u(x)-u(y)|^q}{|x-y|^{n+sq}}dy\right)^{\frac{p}{q}}dx\right)^\frac{1}{p}
	$$
	gives a (quasi) seminorm for the above-mentioned range of $p,q,s$. By quasinorm we mean that for $u,v\in\dot{W}_q^{s,p}(\R^n)$ we have 
	\begin{equation}\label{quasinorm constant}
		[u+v]_{W_q^{s,p}(\Om)}\leq C ([u]_{W_q^{s,p}(\Om)}+[v]_{W_q^{s,p}(\Om)}) 
	\end{equation}  
	where $C>1$, depending on $p,q$ when either or both $p,q<1$, and $C=1$ when $p,q\geq1$. In the case $p=q$, we denote $\dot{W}^{s,p}_p(\Om)$ by $\dot{W}^{s,p}(\Om)$. The corresponding non-homogeneous spaces are $W^{1,p}(\Om):=\dot{W}^{1,p}(\Om)\cap L^p(\Om)$, $W^{s,p}(\Om):=\dot{W}^{s,p}(\Om)\cap L^p(\Om)$, and $W^{s,p}_q(\Om):=\dot{W}^{s,p}_q(\Om)\cap L^p(\Om)$. The norm of $u\in W^{s,p}_q(\Om)$ is
	\begin{equation}
		\|u\|_{W^{s,p}_q(\Om)}=\|u\|_{L^p(\Om)}+[u]_{W_q^{s,p}(\Om)}. 
	\end{equation}
	For an open set $A\subset \R^n$, the $(s,p,q)$-Capacity of $A$ for $0<s<1$, $1<p<\infty$ and $1<q<\infty$  is defined to be
	\begin{equation}
		\inf\{\|u\|_{W^{s,p}_q}^p\mid u\in W^{s,p}_q(\R^n), \ u\geq1 \text{ on a neighbourhood of $A$}\}
	\end{equation}
	and it is denoted by $\mathrm{Cap}_{s,p,q}(A)$. In the case $p=q$, we simplify notation by writing it as $\mathrm{Cap}_{s,p}(A)$.
	For future reference, we mention the following results:
	\begin{lemma}\cite[Theorem~5.1.9]{AdDa}\label{haus_cap_AdDa}
		Let $s\in (0,1)$, $p>1$, $sp<n$, and let $A\subset\R^n$. Then $\mathrm{C}_{s,p}(A)=0$, whenever $\mathcal{H}^{n-sp}(A)<\infty$. Here, $\mathrm{C}_{s,p}(A)$ denotes the Bessel capacity of the set $A$.
	\end{lemma}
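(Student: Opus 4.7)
The plan is to invoke the classical potential-theoretic relationship between Bessel capacities and Hausdorff measures at the critical exponent, which is the content of Theorem~5.1.9 in \cite{AdDa}. The argument rests on the Wolff potential characterization of zero Bessel capacity: for $1<p<\infty$ and $0<sp<n$, one has $\mathrm{C}_{s,p}(A)=0$ if and only if every positive finite Borel measure $\mu$ supported on $\overline A$ with $\mu(A)>0$ satisfies
\[
\int \left(\int_0^1 \left(\frac{\mu(B(x,r))}{r^{n-sp}}\right)^{1/(p-1)} \frac{dr}{r}\right) d\mu(x) = \infty.
\]

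Assume for contradiction that $\mathrm{C}_{s,p}(A)>0$. Then there exists such a measure $\mu$ whose Wolff energy above is finite, so that at $\mu$-a.e.\ $x$ the ratio $\mu(B(x,r))/r^{n-sp}$ must decay to zero along sufficiently many scales to make the inner radial integral finite. On the other hand, the hypothesis $\mathcal H^{n-sp}(A)<\infty$ triggers the density theorems of Besicovitch--Marstrand type: the upper $(n-sp)$-density $\limsup_{r\to 0^+}\mu(B(x,r))/r^{n-sp}$ is strictly positive at $\mathcal H^{n-sp}$-a.e.\ point of the support of $\mu$, and, after handling a possible singular part separately, at $\mu$-a.e.\ point as well. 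These two conclusions are incompatible, and the contradiction forces $\mathrm{C}_{s,p}(A)=0$.

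The main obstacle lies in coordinating the differentiation theorem for $\mathcal H^{n-sp}$ with the nonlinearity of the Wolff integrand at the precisely critical scaling $n-sp$; this delicate book-keeping is exactly what is carried out in \cite[Theorem~5.1.9]{AdDa}, and in the present paper it is enough to quote that result directly. An equivalent route, which avoids the Wolff formulation, constructs a Frostman-type measure $\mu$ on $A$ with $\mu(B(x,r))\lesssim r^{n-sp}$, estimates the Bessel potential $G_s\ast\mu$ via a dyadic annular decomposition, and extracts a divergent series $\sum_k 2^{ks(p-1)}$ that contradicts membership of $G_s\ast\mu$ in $L^{p'}(\R^n)$.
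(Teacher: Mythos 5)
The paper offers no proof of this lemma at all---it is quoted verbatim from \cite[Theorem~5.1.9]{AdDa}---and your proposal ultimately does exactly the same, deferring the delicate critical-exponent argument to that reference, so the two approaches coincide. Your accompanying sketch is essentially the standard Wolff-potential argument (finite Wolff energy forces $\limsup_{r\to0}\mu(B(x,r))/r^{n-sp}=0$ at $\mu$-a.e.\ point, while the Frostman-type comparison between $\mu$ and $\mathcal{H}^{n-sp}$ on a set of finite $\mathcal{H}^{n-sp}$-measure forces positive upper density $\mu$-a.e.\ on $A$), though note that the density step should be phrased via that comparison lemma rather than ``positive density at $\mathcal{H}^{n-sp}$-a.e.\ point of the support of $\mu$,'' and the suggested alternative route that builds a Frostman measure on $A$ with growth $r^{n-sp}$ would not by itself yield $\mathrm{C}_{s,p}(A)=0$; neither point affects the citation-based proof used here.
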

	\begin{lemma}\cite[Proposition~4.4.4]{AdDa}\label{capacity comparison}
		Let $1<p<\infty$ and $0<p\leq n/s$. Then there is a constant $M$ such that for all $E\subset\R^n$ and $q\in[1,\frac{np}{n-sp})$
		$$M^{-1}\mathrm{C}_{s,p}(E)\leq \mathrm{Cap}_{s,p,q}(E)\leq M \mathrm{C}_{s,p}(E).$$
	\end{lemma}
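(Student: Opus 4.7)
Since this result is stated as a citation from \cite{AdDa}, my plan would mirror the framework there and prove the two inequalities separately by passing through the Riesz $(s,p)$-capacity
$$
R_{s,p}(E) := \inf\{\|f\|_{L^p}^p : f \geq 0, \ I_s f \geq 1 \text{ on a neighborhood of } E\},
$$
where $I_s f(x) := \int_{\R^n} |x-y|^{s-n} f(y) \, dy$ is the Riesz potential. Standard potential theory gives $\mathrm{C}_{s,p}(E) \approx R_{s,p}(E)$ when $sp < n$ (see Chapter~2 of \cite{AdDa}), so the remaining task is to show $\mathrm{Cap}_{s,p,q}(E) \approx R_{s,p}(E)$.

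For the direction $\mathrm{Cap}_{s,p,q}(E) \lesssim R_{s,p}(E)$, I would fix an admissible $f$ for $R_{s,p}(E)$ and verify that $u := I_s f \in W^{s,p}_q(\R^n)$ with $\|u\|_{W^{s,p}_q} \lesssim \|f\|_{L^p}$. The $L^p$ bound on $u$ follows from Hardy--Littlewood--Sobolev (possibly after truncation) since $sp < n$. For the seminorm, the key step is a pointwise estimate
$$
\left(\int_{\R^n} \frac{|I_s f(x) - I_s f(y)|^q}{|x-y|^{n+sq}} \, dy \right)^{1/q} \lesssim M f(x),
$$
obtained by decomposing the $z$-integration defining $I_s f(x) - I_s f(y)$ into the near region $\{|z-x| \leq 2|x-y|\}$ and its complement, and applying a mean-value bound on the Riesz-kernel difference in the far region. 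The restriction $q < np/(n-sp)$ is precisely what makes the inner $y$-integral converge. Raising to the $p$-th power in $x$ and invoking the Hardy--Littlewood maximal inequality then yields the required bound.

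For the converse $R_{s,p}(E) \lesssim \mathrm{Cap}_{s,p,q}(E)$, I would use a Hajlasz-type pointwise characterization: each $u \in W^{s,p}_q(\R^n)$ admits a nonnegative $g \in L^p(\R^n)$ with $\|g\|_{L^p} \lesssim [u]_{W^{s,p}_q}$ satisfying $|u(x) - u(y)| \leq |x-y|^s(g(x) + g(y))$ almost everywhere. A Lebesgue-point integration argument then gives the pointwise bound $|u(x)| \lesssim I_s g(x) + \|u\|_{L^p}$, so that, after absorbing the lower-order term and rescaling, $g$ becomes admissible for $R_{s,p}(E)$. The main obstacle I would expect is establishing the Hajlasz-type estimate for general $q \in [1, np/(n-sp))$ with the correct $L^p$-bound on $g$; this requires a careful use of vector-valued maximal function inequalities, and the upper constraint on $q$ enters again through the admissible range for the Sobolev embedding of $F^s_{p,q}$.
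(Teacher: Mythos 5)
There is nothing to compare against inside the paper: Lemma~\ref{capacity comparison} is not proved there at all, it is quoted from \cite[Proposition~4.4.4]{AdDa}, used together with the identification of $W^{s,p}_q(\R^n)$ with the Triebel--Lizorkin space $F^s_{p,q}(\R^n)$, which is valid exactly for $q<\frac{np}{n-sp}$ (this identification, \cite[Theorem~1.2]{PrEe}, is the true source of the restriction on $q$, not the convergence of an inner integral). So the only question is whether your sketch would prove the statement, and as written it would not.

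The most serious gap is the detour through the Riesz capacity $R_{s,p}$: the Bessel capacity $\mathrm{C}_{s,p}$ and $R_{s,p}$ are comparable only for sets of bounded diameter, whereas the lemma asserts a single constant $M$ for all $E\subset\R^n$. For a ball $B_r$ with $r\to\infty$ one has $R_{s,p}(B_r)\approx r^{n-sp}$ by scaling, while $\mathrm{C}_{s,p}(B_r)\approx r^n$ and also $\mathrm{Cap}_{s,p,q}(B_r)\gtrsim r^n$ because the defining norm contains $\|u\|_{L^p}$; thus both the link $R_{s,p}\approx\mathrm{C}_{s,p}$ and the inequality $\mathrm{Cap}_{s,p,q}\lesssim R_{s,p}$ fail globally. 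Related to this, the $L^p$ bound for $u=I_sf$ is not available: Hardy--Littlewood--Sobolev maps $L^p$ to $L^{np/(n-sp)}$, not to $L^p$, and truncation at height one does not cure the slow decay at infinity; the inhomogeneous theory works with the Bessel kernel $G_s$ precisely because $\|G_s*f\|_{L^p}\le\|f\|_{L^p}$ by Young's inequality. The claimed pointwise bound $\bigl(\int_{\R^n}|I_sf(x)-I_sf(y)|^q|x-y|^{-n-sq}\,dy\bigr)^{1/q}\lesssim Mf(x)$ is also false in part of the stated range: testing with $f=\chi_{B(z_0,1)}$ and $|x-z_0|=R$ large gives a left-hand side of order $R^{-\left(\frac{n}{q}+s\right)}$ against $Mf(x)\approx R^{-n}$, so it fails whenever $q>\frac{n}{n-s}$, and such $q$ lie below $\frac{np}{n-sp}$ for every $p>1$; moreover an unrestricted norm estimate $[I_sf]_{W^{s,p}_q}\lesssim\|f\|_{L^p}$ would essentially amount to $L^p\hookrightarrow\dot F^0_{p,q}$, which fails for $q<2$, so positivity of the admissible $f$ must be used in an essential way. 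Finally, in the converse direction the additive term $\|u\|_{L^p}$ in $|u(x)|\lesssim I_sg(x)+\|u\|_{L^p}$ cannot simply be ``absorbed'': one must split according to which term is $\ge\tfrac12$ on the set where $u\ge1$ and control the maximal-function alternative by a capacitary weak-type estimate. The actual mechanism in \cite{AdDa} is different in kind: capacities are computed through potentials of nonnegative measures, and for nonnegative measures the dual (negative-smoothness) Besov/Triebel--Lizorkin norms are comparable for all fine indices, which gives the $q$-independence of the $F^s_{p,q}$-capacities; if you want a self-contained proof, that is the route to take, or at least your scheme must be rerun with $G_s$ in place of $I_s$ and with positivity exploited throughout.
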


			For any measurable function $u:\R^n\to\R$ with $u$ finite a.e. and a measurable subset $E\subset\R^n$ of finite positive measure, \textit{the median} of $u$, over the set $E$, is
			$$
			m_u(E):=\sup \left\{ M\in\R \ | \ |\{x\in E\ |\ u(x)< M\}| \geq \frac{|E|}{2}\right\}.
			$$
			For a fixed set $E$, we define
			\begin{equation}\label{partition on med}
				E^-:= \left\{ x\in E\ |\ u(x)\leq m_u(E) \right\}
				\quad 
				\text{and}
				\quad
				E^+:= \left\{ x\in E\ |\ u(x)\geq m_u(E) \right\}.
			\end{equation}
			Note that $2|E^+|\geq |E|$ and $2|E^-|\geq |E|$. We record two observations, which we shall use in the sequel.
			\begin{lemma}\label{l2}
				Let $E, F$ be sets of finite positive measure in $\R^n$. Let $u$ be a real-valued measurable function a.e. finite on $\R^n$. Then there exist $\tilde{E}\subset E, \tilde{F}\subset F$ such that $|\tilde{E}|\geq\frac{|E|}{2},|\tilde{F}|\geq\frac{|F|}{2}$ and 
				$$|m_u(E)-m_u(F)|\leq|u(x)-u(y)|$$for all $x\in\tilde{E}, y\in\tilde{F}$.
			\end{lemma}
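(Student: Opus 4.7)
The statement is a simple and clean consequence of how the median partitions the set, so the proof plan is short and the main task is just to pick the correct halves on each side.

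Without loss of generality I will assume $m_u(E)\leq m_u(F)$ (otherwise swap the roles of $E$ and $F$, which only swaps $\tilde E$ and $\tilde F$ in the conclusion and does not affect the absolute value on the left-hand side). The plan is then to take
$$
\tilde E := E^- = \{x\in E\mid u(x)\leq m_u(E)\},\qquad \tilde F := F^+ = \{y\in F\mid u(y)\geq m_u(F)\}.
$$
By the measure estimates recorded right after the definition of the median (namely $2|E^-|\geq |E|$ and $2|F^+|\geq |F|$), these subsets satisfy the required measure lower bounds $|\tilde E|\geq |E|/2$ and $|\tilde F|\geq |F|/2$.

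For the pointwise inequality, for any $x\in\tilde E$ and $y\in\tilde F$ I would just chain the two median bounds: $u(x)\leq m_u(E)\leq m_u(F)\leq u(y)$, hence
$$
u(y)-u(x)\geq m_u(F)-m_u(E) = |m_u(E)-m_u(F)|,
$$
so $|u(x)-u(y)|\geq |m_u(E)-m_u(F)|$, which is the claim.

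There is essentially no obstacle; the only thing to be slightly careful about is the case distinction on the sign of $m_u(E)-m_u(F)$, which is handled automatically by the symmetric way the two medians enter the inequality. Note also that the measurability of $\tilde E$ and $\tilde F$ is immediate since $u$ is measurable and $m_u(E), m_u(F)$ are real numbers (using that $u$ is finite a.e., so the medians are finite).
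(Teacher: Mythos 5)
Your proof is correct and follows essentially the same route as the paper: choose $\tilde E=E^-$ and $\tilde F=F^+$ when $m_u(E)\leq m_u(F)$ (and swap in the other case, which you handle by symmetry rather than by writing out the second case), then chain $u(x)\leq m_u(E)\leq m_u(F)\leq u(y)$ together with the measure bounds $2|E^-|\geq|E|$, $2|F^+|\geq|F|$. No issues.
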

			\begin{proof}
				Let $E, F$ be two subsets of finite measure in $\mathbb{R}^n$. If $$m_u(E)\leq m_u(F),$$ set $\tilde{E}=E^-$ and $\tilde{F}=F^+$.
				Then for all $x\in\tilde{E}$ and $y\in\tilde{F}$ we get 
				\begin{align}
					\label{1.1.1}
					0 \leq m_u(F)-  m_u(E)
					\leq u(y)-u(x)
					= |u(x)-u(y)|.
				\end{align}
				On the other hand if $$m_u(F)\leq m_u(E),$$ set $\tilde{E}=E^+$ and $\tilde{F}=F^-$. Then for all $x\in\tilde{E}$ and $y\in\tilde{F}$
				\begin{align}
					\label{1.1.2}
					0 \leq m_u(E)-  m_u(F)|
					= u(x)-u(y)
					\leq |u(x)-u(y)|.
				\end{align}
				The claim follows from inequalities $\eqref{1.1.1}$ and $\eqref{1.1.2}$.\end{proof}
			\begin{lemma}\label{med_another lemma}
				Let $E$ be a set of finite positive measure in $\R^n$. Let $u$ be a real-valued measurable function a.e. finite on $\R^n$. Let $c\in\R$. Then there exists a set $\tilde{E}\subset E$ such that $|\tilde{E}|\geq\frac{|E|}{2}$ and 
				$$|m_u(E)-c|\leq|u(x)-c|$$for all $x\in\tilde{E}$.
			\end{lemma}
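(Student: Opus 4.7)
The plan is to mimic exactly the two-case dichotomy used in the proof of Lemma~\ref{l2}, but now comparing the median $m_u(E)$ against the fixed constant $c$ instead of against a second median $m_u(F)$. The roles of $E^+$ and $E^-$ from the partition defined in \eqref{partition on med} should suffice, since by construction both have measure at least $|E|/2$; the choice between them will be dictated by the sign of $c - m_u(E)$.

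Specifically, I would split into the cases $c \geq m_u(E)$ and $c \leq m_u(E)$. In the first case, set $\tilde{E} = E^-$; then every $x \in \tilde{E}$ satisfies $u(x) \leq m_u(E) \leq c$, so
\[
|u(x) - c| = c - u(x) \geq c - m_u(E) = |m_u(E) - c|.
\]
In the second case, set $\tilde{E} = E^+$; then every $x \in \tilde{E}$ satisfies $u(x) \geq m_u(E) \geq c$, so
\[
|u(x) - c| = u(x) - c \geq m_u(E) - c = |m_u(E) - c|.
\]
In both cases the measure condition $|\tilde{E}| \geq |E|/2$ is immediate from the observation recorded immediately after \eqref{partition on med}.

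There is essentially no obstacle here: the statement is a one-sided variant of Lemma~\ref{l2}, and the argument reduces to checking the two sign cases. The only mildly subtle point is making sure the inequality is stated with the correct orientation so that $|u(x)-c|$ dominates $|m_u(E)-c|$ (rather than the reverse), which is why one picks $E^-$ when $c$ is above the median and $E^+$ when $c$ is below it.
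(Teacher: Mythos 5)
Your proposal is correct and follows the same argument as the paper: both split on the sign of $c-m_u(E)$ and take $\tilde{E}=E^-$ when $c\geq m_u(E)$ and $\tilde{E}=E^+$ when $c\leq m_u(E)$, using the measure bounds recorded after \eqref{partition on med}. Nothing is missing.
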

			\begin{proof}
				Fix $c\in\R$. Let $E\subset \R^n$ be a set of finite positive measure. Suppose, $m_u(E)\geq c$. Choose $x\in E^{+}$ where $E^{+}$ is as in \eqref{partition on med}, then
				\begin{equation}
					0\leq m_u(E)-c \leq u(x) -c\leq |u(x)-c|.
				\end{equation}
				If $m_u(E)\leq c$ then for $x\in E^-$  where $E^{-}$ is as in \eqref{partition on med}, we have 
				\begin{equation}
					0\leq c-m_u(E)\leq c-u(x)\leq |u(x)-c|.
				\end{equation} Therefore we get that 
				\begin{equation}
					|m_u(E)-c|\leq |u(x)-c|
				\end{equation}
				for a set $\tilde{E}\subset E$ with $|\tilde{E}|\geq \frac{|E|}{2}$. 
			\end{proof}

			\begin{lemma}\label{l1}
				Let $0<p,q<\infty$. Let $E, F$ be subsets of finite measure in $\R^n$ and let $u$ be a real-valued measurable function a.e. finite on $\R^n$. Then we have
				\begin{align}\label{2.1.1}
					|m_u(E)-m_u(F)|^p\leq 2^{1+\frac{p}{q}}\intav_E\left(\intav_F|u(x)-u(y)|^qdy\right)^{\frac{p}{q}}dx.
				\end{align}
			\end{lemma}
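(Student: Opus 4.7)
The plan is to use Lemma~\ref{l2} to reduce the bound on $|m_u(E)-m_u(F)|^p$ to an integral inequality, and then keep careful track of the constants produced by passing from the subsets $\tilde{E}\subset E$, $\tilde{F}\subset F$ back to $E$ and $F$.

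First, I apply Lemma~\ref{l2} to obtain $\tilde{E}\subset E$ and $\tilde{F}\subset F$ with $|\tilde E|\geq |E|/2$, $|\tilde F|\geq |F|/2$, and
$$
|m_u(E)-m_u(F)|\leq |u(x)-u(y)|\quad\text{for all } x\in\tilde E,\ y\in\tilde F.
$$
For any fixed $x\in\tilde E$, raising to the $q$-th power and averaging over $y\in\tilde F$ gives
$$
|m_u(E)-m_u(F)|^q\leq \frac{1}{|\tilde F|}\int_{\tilde F}|u(x)-u(y)|^q\,dy\leq \frac{2}{|F|}\int_{F}|u(x)-u(y)|^q\,dy,
$$
where in the last step I used $|\tilde F|\geq |F|/2$ and $\tilde F\subset F$ (together with nonnegativity of the integrand). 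Raising to the power $p/q$ yields, for every $x\in\tilde E$,
$$
|m_u(E)-m_u(F)|^p\leq 2^{p/q}\left(\intav_F|u(x)-u(y)|^q\,dy\right)^{p/q}.
$$

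Next, I average the resulting inequality over $x\in\tilde E$ and again trade $\tilde E$ for $E$ using $|\tilde E|\geq |E|/2$:
$$
|m_u(E)-m_u(F)|^p\leq \frac{2^{p/q}}{|\tilde E|}\int_{\tilde E}\left(\intav_F|u(x)-u(y)|^q\,dy\right)^{p/q}dx\leq 2^{1+p/q}\intav_E\left(\intav_F|u(x)-u(y)|^q\,dy\right)^{p/q}dx,
$$
which is the desired bound. The only subtle point is bookkeeping the two factors of $2$ arising from the two set inclusions, and noting that the argument is valid for all $0<p,q<\infty$ since it uses only pointwise inequalities and monotonicity of $t\mapsto t^{p/q}$ on $[0,\infty)$; no integrability of $u$ is assumed beyond almost-everywhere finiteness, which is enough for the medians to be well defined. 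I expect no real obstacle here: the content of the lemma is essentially the content of Lemma~\ref{l2}, and the work is purely arithmetic.
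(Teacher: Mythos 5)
Your argument is correct and is essentially the paper's own proof: both apply Lemma~\ref{l2} to get the pointwise inequality on $\tilde E\times\tilde F$, average in $y$ over $\tilde F$, raise to the power $p/q$, average in $x$ over $\tilde E$, and then enlarge $\tilde E,\tilde F$ to $E,F$ at the cost of the two factors of $2$ giving $2^{1+p/q}$. The only difference is the order in which the factors of $2$ are collected, which is immaterial.
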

			
			\begin{proof}
				Using Lemma \ref{l2} we find subsets $\tilde{E}\subset E, \tilde{F}\subset F$ of finite measure such that $|\tilde{E}|\geq\frac{|E|}{2},|\tilde{F}|\geq\frac{|F|}{2}$ and 
				$$|m_u(E)-m_u(F)|\leq|u(x)-u(y)|$$
				for all $x\in\tilde{E}, y\in\tilde{F}$. Then
				$$ |m_u(E)-m_u(F)|^q
				\leq\intav_{\tilde{F}}|u(x)-u(y)|^qdy.$$
				Hence, it follows that
				\begin{align}
					|m_u(E)-m_u(F)|^p&
					\leq\left(\intav_{\tilde{F}}|u(x)-u(y)|^qdy\right)^\frac{p}{q}\\
					&\leq\intav_{\tilde{E}}\left(\intav_{\tilde{F}}|u(x)-u(y)|^qdy\right)^\frac{p}{q}dx\\
					&\leq\frac{1}{|\tilde{E}|}\int_{\tilde{E}}\left(\frac{1}{|\tilde{F}|}\int_{\tilde{F}}|u(x)-u(y)|^qdy\right)^\frac{p}{q}dx\\
					&\leq\frac{2}{|E|}\int_{E}\left(\frac{2}{|F|}\int_{F}|u(x)-u(y)|^qdy\right)^\frac{p}{q}dx\\
					&\leq2^{1+\frac{p}{q}}\intav_{E}\left(\intav_{F}|u(x)-u(y)|^qdy\right)^\frac{p}{q}dx.
				\end{align}
			\end{proof}

			\begin{definition}\label{goog-rep}
				Let $0<s<1$, $0<p,q<\infty$ and $u\in\dot{W}^{s,p}_q(\R^n)$. We define the \textit{precise representative} $u^*$ of $u$  to be the function given by
				\begin{equation}
					u^*(x):=\limsup_{\underset{x\in Q}{|Q|\to0}}m_u(Q),
				\end{equation}
				for all $x\in \R^n$, where $Q$ is required to be a cube in $\R^n$. Note that $u^*(x)=u(x)$ a.e. in $\R^n$ by \cite[Lemma~2.2]{Fujii}. Hence $u^*\in\dot{W}^{s,p}_q(\R^n)$. With slight abuse of notation, we will assume that each $u\in\dot{W}^{s,p}_q(\R^n)$ is itself a precise representative in what follows. 
			\end{definition}

			Now let us formally define the term ``\textit{limit at infinity}'' of a function $u\in W^{s,p}_q(\R^n)$.
			\begin{definition}[Limit at infinity]\label{limit-at-infty}
				Let $P^k\subset \R^n$ be a $k$-dimensional affine subspace of $\R^n$ and $u\in \dot{W}^{s,p}_q(\R^n)$. We say that $u$ has a limit $c\in \R$ at infinity along $P^k$, if for all $\varepsilon>0$, there is some $R>0$ such that $|u(x)-c|<\varepsilon$ whenever $|x|>R$ and $x\in P^k$. 
			\end{definition}
			
			Recalling our convention from Definition~\ref{goog-rep}, we thus require that the precise representative of $u$ has the limit $c$. Let $M \subset \R^n$ be any $(n-k)$-dimensional submanifold of $\R^n$. At each $x\in M$ the tangent space $T_x M$ is a vector space of dimension $(n-k)$. Then $x+ (T_x M)^\perp$ is the unique $k$-dimensional affine normal subspace of $M$ inside $\R^n$ at $x$. We denote it by $P(x,M)$. Consider the collection $\mathcal{P}M:=\{P(x,M)\}_{x\in M}$. 
			\begin{definition}\label{a.e.limit}
				We say that a property holds for $\mathcal{H}^{n-k}$-almost every $k$-plane perpendicular to a $(n-k)$-dimensional submanifold $M \subset \R^n$, if there is some subset $E\subset M$ with $\mathcal{H}^{n-k}(E)=0$ and the property holds for any $P(x,M) \in \mathcal{P}M$ whenever $x\in M\setminus E$. 
			\end{definition} 
			
			\begin{remark}
				We define a canonical measure in $\mathcal{P}M$ by the push forward of the $(n-k)$-dimensional Hausdorff measure on $M$ via the bijection $x\mapsto P(x,M)$. With a slight abuse of notation, we continue to denote this newly defined measure by $\mathcal{H}^{n-k}$.
			\end{remark}

			\begin{definition}\label{ps-thin}
				Let $n\geq2$, $0<s<1$ and $0<p<\infty$. A subset $E\in\R^n$ is said to be \textit{$(p,s)-$ thin at infinity} if 
				\begin{equation}
					\lim_{m\to\infty}\mathcal{H}^{\lambda}_{\infty}(E\cap B(0,m)^c)=0\text{           for any $\lambda>n-sp$}.
				\end{equation}
			\end{definition}

			\begin{lemma}\cite[Lemma 3.1]{KlKo}\label{lm-Klein-original}
				Let $0< p<+\infty$. If $\{b_l\}_{l\in\N}$ is a sequence with $b_l\geq0$ and such that $\sum_{l\in\N}b_l<\infty$, then there exists a non-increasing sequence $\{a_l\}_{l\in\N}$ with $a_l>0$
				and $\lim_{l\to +\infty}a_l=0$, such that
				\begin{equation}
					\sum_{l\in\N}b_l a_l^{-p}<+\infty.
			\end{equation}\end{lemma}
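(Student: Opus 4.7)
The plan is to construct $\{a_l\}$ as a piecewise constant, non-increasing, positive sequence whose rate of decay is calibrated to the decay of the tail sums $B_k:=\sum_{l\geq k}b_l$. Summability of $\sum_l b_l$ forces $B_k\to 0$, so I can inductively pick an increasing sequence of indices $k_0<k_1<k_2<\cdots$ with $k_0$ equal to the first index of the series and $B_{k_j}\leq 4^{-j}$ for every $j\geq 1$. Then, on each block $k_j\leq l<k_{j+1}$, I would set $a_l:=2^{-j/p}$.

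By construction $\{a_l\}$ is strictly positive, non-increasing, and converges to $0$, so the qualitative requirements of the lemma are automatic. To verify the summability condition I would compute, using that $\sum_{l=k_j}^{k_{j+1}-1}b_l=B_{k_j}-B_{k_{j+1}}$,
$$\sum_{l\in\N}b_l a_l^{-p}=\sum_{j\geq 0}2^{j}\sum_{l=k_j}^{k_{j+1}-1}b_l=\sum_{j\geq 0}2^{j}(B_{k_j}-B_{k_{j+1}})\leq\sum_{j\geq 0}2^{j}B_{k_j}\leq B_{k_0}+\sum_{j\geq 1}2^{j}\cdot 4^{-j}<\infty,$$
which is exactly the claimed conclusion.

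There is no genuine obstacle here: the statement is essentially a classical "weighted convergence" trick and the argument is purely combinatorial. The only point requiring mild care is the calibration between the inverse weight $2^{j}$ and the tail bound $4^{-j}$, chosen precisely so that their product is geometrically summable; any pair of bases $A>1$ and $C>A$ would serve equally well and could be used if one wanted to prescribe a slower rate of decay for $\{a_l\}$. Finally, I would observe that the argument is independent of whether the indexing of the series starts at $0$ or $1$, and that the exponent $p\in(0,\infty)$ enters only through the harmless rescaling $2^{-j}\mapsto 2^{-j/p}$.
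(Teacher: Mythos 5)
Your proof is correct: the tail sums $B_k=\sum_{l\geq k}b_l$ tend to zero, the block choice $B_{k_j}\leq 4^{-j}$ is available, and the computation $\sum_l b_l a_l^{-p}=\sum_j 2^j(B_{k_j}-B_{k_{j+1}})\leq B_{k_0}+\sum_{j\geq1}2^{-j}$ is valid, with $\{a_l\}$ positive, non-increasing and tending to $0$ as required. Note that the paper itself gives no proof (the lemma is quoted from the cited reference), and your tail-calibration argument is essentially the standard one for this statement — the usual variant takes $a_l$ to be a suitable root of the tail $\sum_{m\geq l}b_m$ and telescopes, which your block construction simply discretizes — so there is nothing further to check.
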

			
			We will use the following inequalities in the sequel:
			\subsection*{Inequality}If $a_i\geq0$ for all $i$ and $0<p\leq1$, then
			\begin{equation} \label{ineq-p<1}
				\sum_{i\in\Z}a_i\leq \left(\sum_{i\in\Z}a_i^{p}\right)^{\frac{1}{p}}.
			\end{equation}

			\subsection*{Dyadic Decomposition}\label{1.2}
			Let $Q_0\subset\R^n$ be a cube. The dyadic decomposition $\mathcal{D}(Q_0)$ of $Q_0$ is defined as 
			\begin{equation}
				\mathcal{D}(Q_0)=\bigcup_{j=0}^\infty\mathcal{D}_j(Q_0),
			\end{equation} 
			where each $\mathcal{D}_j(Q_0)$ consists of $2^{jn}$ pairwise disjoint half open cubes $Q$ with edge length $\ell(Q)=2^{-j}\ell(Q_0)$ such that 
			\begin{equation}
				Q_0=\bigcup_{Q\in\mathcal{D_j}(Q_0)}Q
			\end{equation} 
			for every $j\in\N_0$. For a more detailed discussion about dyadic decomposition, we refer to \cite{Javier}.
			We continue with a useful lemma.
			\begin{lemma}\cite[Lemma 8.6]{KiLe}\label{lm-antti}
				Let $Q_0\subset\R^n$ be a cube. Assume that $u\in L^1(Q_0)$ is a non-negative function and let 
				\begin{equation}
					E_t=\left\{x\in Q_0\mid\sup_{x\in Q}\intav_{Q}|u(y)|dy>t\right\}
				\end{equation}
				for $t>0$. For every $t\geq u_{Q_0}$, there exists a collection $\mathcal{D}_t$ of pairwise disjoint dyadic cubes $Q\in\mathcal{D}(Q_0)$ such that
				\begin{equation}
					E_t=\bigcup_{Q\in\mathcal{D}_t}Q
				\end{equation}
				and \begin{equation}
					t<\intav_{Q}u(x)dx\leq2^nt 
				\end{equation}for every $Q\in\mathcal{D}_t$.
			\end{lemma}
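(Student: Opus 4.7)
The plan is to run the standard Calder\'on--Zygmund stopping-time argument on the dyadic tree $\mathcal{D}(Q_0)$. Consider the collection of \emph{bad} cubes
$$
\mathcal{B} := \Bigl\{Q \in \mathcal{D}(Q_0) : \intav_{Q} u(y)\,dy > t\Bigr\}.
$$
The hypothesis $t \geq u_{Q_0} = \intav_{Q_0} u(y)\,dy$ forces $Q_0 \notin \mathcal{B}$, so every cube in $\mathcal{B}$ is strictly contained in $Q_0$ and possesses a dyadic parent inside $\mathcal{D}(Q_0)$.

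For each $Q \in \mathcal{B}$, the ancestor chain of $Q$ inside $\mathcal{D}(Q_0)$ is finite and terminates at $Q_0 \notin \mathcal{B}$, so $Q$ is contained in a unique maximal (with respect to inclusion) element of $\mathcal{B}$. Let $\mathcal{D}_t$ denote the collection of all such maximal bad cubes. Any two distinct dyadic cubes in $\mathcal{D}(Q_0)$ are either disjoint or nested, and nestedness is ruled out by maximality, so the cubes in $\mathcal{D}_t$ are pairwise disjoint. Membership in $\mathcal{B}$ immediately gives the lower bound $\intav_Q u(y)\,dy > t$ for each $Q \in \mathcal{D}_t$.

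For the upper bound, fix $Q \in \mathcal{D}_t$ and let $\widehat{Q}$ denote its dyadic parent, so $|\widehat{Q}| = 2^n|Q|$. Maximality of $Q$ gives $\widehat{Q} \notin \mathcal{B}$, and hence
$$
\intav_Q u(y)\,dy = \frac{1}{|Q|}\int_Q u(y)\,dy \leq \frac{2^n}{|\widehat{Q}|}\int_{\widehat{Q}} u(y)\,dy \leq 2^n t.
$$
It remains to verify $E_t = \bigcup_{Q \in \mathcal{D}_t} Q$: the inclusion $\supseteq$ is immediate from $\intav_Q u(y)\,dy > t$ for each $Q \in \mathcal{D}_t$, while if $x \in E_t$ then some dyadic cube containing $x$ belongs to $\mathcal{B}$, and this cube sits inside the (unique) maximal element of $\mathcal{B}$ that dominates it, yielding $\subseteq$.

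The argument contains no serious obstacle; the only point needing explicit justification is the existence of maximal bad cubes, and that is provided precisely by the assumption $t \geq u_{Q_0}$, which guarantees that the ancestor chain of any bad cube leaves $\mathcal{B}$ before reaching the root $Q_0$.
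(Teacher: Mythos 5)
Your proposal is correct: it is the standard dyadic Calder\'on--Zygmund stopping-time argument, and since the paper does not prove this lemma but only cites it as \cite[Lemma 8.6]{KiLe}, your argument is essentially the proof underlying the cited result (maximal bad cubes exist because $t\geq u_{Q_0}$ excludes the root, disjointness and the bounds $t<\intav_Q u\leq 2^n t$ follow from maximality, the parent cube, and $u\geq0$). The only reading you implicitly fix, correctly, is that the supremum defining $E_t$ runs over dyadic subcubes $Q\in\mathcal{D}(Q_0)$ containing $x$, which is what makes the exact equality $E_t=\bigcup_{Q\in\mathcal{D}_t}Q$ possible and is the intended meaning of the statement.
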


			\bigskip	
			
			\section{Counterexamples}\label{counter examples}
			\begin{proof}[\textbf{Proof of Proposition~\ref{u-c for no c}}]
				Write $\lambda=\frac{n}{p}$. Define a function $u:\R^n\to\R$ by setting
				\begin{equation}
					u(x) = \begin{cases}
						1, \qquad &\text{if } |x|\leq1,\\
						|x|^{-\lambda}, \qquad &\text{if } |x|>1.
					\end{cases}
				\end{equation}
				Then
				\begin{equation}
					|\nabla u(x)| = \begin{cases}
						0, \qquad &\text{if } |x|\leq1,\\
						\lambda|x|^{-\lambda-1}, \qquad &\text{if } |x|>1.
					\end{cases}
				\end{equation}

				We will show that $u\in\dot{W}_q^{s,p}(\R^n)$. 
				By the fundamental theorem of calculus, we have 
				\begin{equation}\label{Case_x in R^n}
					\begin{split}
						\int_{ \R^n}\left(\int_{ B(0,2^{-1})} \frac{|u(x+h)-u(x)|^q}{|h|^{n+sq}}dh\right)^{\frac{p}{q}}dx
						&= \int_{ \R^n}\left(\int_{ B(0,2^{-1})} \frac{\left|\int_{0}^1 \nabla u (x+rh)\cdot h dr \right |^q}{|h|^{n+sq}}dh\right)^{\frac{p}{q}}dx\\
						&\leq \int_{ \R^n}(\sup_{y\in B(0,2^{-1})}|\nabla u (x+y)|)^p\left(\int_{ B(0,1)} \frac{1}{|h|^{n+sq-q}}dh\right)^{\frac{p}{q}}dx\\
						&\leq  C(n)\left(\frac{1}{q-sq}\right)^\frac{p}{q}\int_{ \R^n}(\sup_{z\in B(x,2^{-1})}|\nabla u (z)|)^pdx.
					\end{split}
				\end{equation}
				Thus we get
				\begin{equation}
					\begin{split}
						\int_{ \R^n}&\left(\int_{ B(0,2^{-1})} \frac{|u(x+h)-u(x)|^q}{|h|^{n+sq}}dh\right)^{\frac{p}{q}}dx\\
						& \qquad\leq  C(n,p,q,s)\left(\int_{B(0,1)}(\sup_{z\in B(x,2^{-1})}|\nabla u (z)|)^pdx + \int_{ B(0,1)^c}(\sup_{z\in B(x,2^{-1})}|\nabla u (z)|)^pdx\right)\\
						&\qquad\leq C(n,p,q,s,\lambda)\int_{ B(0,1)^c}\frac{1}{|x|^{p(\lambda+1)}}dx\\
						&\qquad\leq C(n,p,q,s,\lambda)\int_{1}^\infty \frac{t^{n-1}dt}{t^{(\lambda+1)p}}
						< \infty,
					\end{split}
				\end{equation}
				since $\lambda+1>\frac{n}{p}$.
				We are left to estimate
				\begin{equation}\label{mod x_eq1}
					\begin{split}
						\int_{ \R^n}\left(\int_{ B(0,2^{-1})^c} \frac{|u(x+h)-u(x)|^q}{|h|^{n+sq}}dh\right)^{\frac{p}{q}}dx
						&\leq \int_{B(0,2)}\left(\int_{B(0,2^{-1})^c} \frac{|u(x+h)-u(x)|^q}{|h|^{n+sq}}dh\right)^{\frac{p}{q}}dx\\
						&\qquad+\int_{ B(0,2)^c}\left(\int_{ B(0,2^{-1})^c} \frac{|u(x+h)-u(x)|^q}{|h|^{n+sq}}dh\right)^{\frac{p}{q}}dx.
					\end{split}
				\end{equation}
				First, let us deal with the first part of the right-hand side of the above inequality. We have
				\begin{equation}
					\begin{split}
						\int_{ B(0,2)}\left(\int_{ B(0,2^{-1})^c} \frac{|u(x+h)-u(x)|^q}{|h|^{n+sq}}dh\right)^{\frac{p}{q}}dx&\leq
						\int_{B(0,2)}\left(\sum_{j=0}^\infty \int_{ B(0,2^{j}) \setminus B(0,2^{j-1})} \frac{1}{|h|^{n+sq}}dh\right)^\frac{p}{q}dx\\
						&\leq C(n,p,q,s) \int_{ B(0,2)}\left(\sum_{j=0}^\infty  2^{-jsq} \right)^\frac{p}{q} dx<\infty.
					\end{split}
				\end{equation}
				We are left to show that the second part of the right-hand side of \eqref{mod x_eq1} is finite. We split the integral into three parts:
				\begin{equation}
					\begin{split}
						\int_{ B(0,2)^c}&\left(\int_{ B(0,2^{-1})^c} \frac{|u(x+h)-u(x)|^q}{|h|^{n+sq}}dh\right)^{\frac{p}{q}}dx\\
						&\leq\int_{ B(0,2)^c}\left(\int_{ B(0,2^{-1})^c\cap [\{|h|\geq 2|x|\}\setminus B(-x,1)}] \frac{|u(x+h)-u(x)|^q}{|h|^{n+sq}}dh\right)^{\frac{p}{q}}dx\\
						&\qquad\qquad+\int_{ B(0,2)^c}\left(\int_{ B(0,2^{-1})^c\cap [\{|h|<2|x|\}\setminus B(-x,1)}] \frac{|u(x+h)-u(x)|^q}{|h|^{n+sq}}dh\right)^{\frac{p}{q}}dx\\
						&\qquad\qquad+\int_{ B(0,2)^c}\left(\int_{ B(-x,1)} \frac{|u(x+h)-u(x)|^q}{|h|^{n+sq}}dh\right)^{\frac{p}{q}}dx\\
						&:=I_1+I_2+I_3. 
					\end{split}
				\end{equation}
				For $I_1$, we have
				\begin{equation}
					\begin{split}
						I_1&=\int_{ B(0,2)^c}\left(\int_{ B(0,2^{-1})^c\cap [\{|h|\geq 2|x|\}\setminus B(-x,1)]} \frac{|u(x+h)-u(x)|^q}{|h|^{n+sq}}dh\right)^{\frac{p}{q}}dx\\
						&\leq \int_{ B(0,2)^c}\left(\int_{B(0,2^{-1})^c\cap \{|h|\geq 2|x|\}} \frac{|\frac{1}{|x+h|^\lambda}-\frac{1}{|x|^\lambda}|^q}{|h|^{n+sq}}dh\right)^{\frac{p}{q}}dx\\
						&\leq\int_{ B(0,2)^c}\left(\int_{\{|h|\geq 2|x|\}} \frac{1}{|x|^{q\lambda}|h|^{n+sq}}dh\right)^{\frac{p}{q}}dx\\
						&\leq\int_{ B(0,2)^c}\frac{1}{|x|^{p\lambda}}\left(\int_{ 2|x|}^\infty \frac{t^{n-1}}{t^{n+sq}}dt\right)^{\frac{p}{q}}dx\\
						&\leq C(s,p,q)\int_{x\in B(0,2)^c}\frac{1}{|x|^{p\lambda+sp}}<\infty
					\end{split}
				\end{equation}
				since $|x+h|\geq ||h|-|x||\geq |h|-|x|\geq |x|$ and $\lambda+s>\frac{n}{p}$. Now for $I_3$ we have 
				\begin{equation}
					\begin{split}
						I_3&=\int_{ B(0,2)^c}\left(\int_{ B(-x,1)} \frac{|u(x+h)-u(x)|^q}{|h|^{n+sq}}dh\right)^{\frac{p}{q}}dx\\ 
						&=\int_{ B(0,2)^c}\left(\int_{ B(-x,1)} \frac{|1-u(x)|^q}{|h|^{n+sq}}dh\right)^{\frac{p}{q}}dx\\
						& \leq\int_{ B(0,2)^c}\left(\int_{B(-x,1)}\frac{1}{|h|^{n+sq}}dh\right)^{\frac{p}{q}}dx\\
						&\leq\int_{ B(0,2)^c}\left(\int_{ B(-x,1)}\frac{1}{(|x|-1)^{n+sq}}dh\right)^{\frac{p}{q}}dx\\
						&\leq C(n,p,q)\int_{ B(0,2)^c}\frac{1}{(|x|-1)^{p(n+sq)q^{-1}}}dx<\infty
					\end{split}
				\end{equation}
				since $q<\frac{np}{n-sp}$ gives $p(n+sq)q^{-1}>n$. For $I_2$, we have
				\begin{equation}
					\begin{split}
						I_2&=\int_{ B(0,2)^c}\left(\int_{ B(0,2^{-1})^c\cap [\{|h|<2|x| \}\setminus B(-x,1)]} \frac{|u(x+h)-u(x)|^q}{|h|^{n+sq}}dh\right)^{\frac{p}{q}}dx\\
						&\leq \int_{B(0,2)^c}\left(\int_{ B(0,2^{-1})^c\cap [\{B(-x,3|x|)\}\setminus B(-x,1)]}\frac{|u(x+h)-u(x)|^q}{|h|^{n+sq}}dh\right)^{\frac{p}{q}}dx\\
						&\leq \int_{B(0,2)^c}\left(\int_{ B(0,2^{-1})^c\cap [B(-x,\frac{|x|}{2})\setminus B(-x,1)]} \frac{|u(x+h)-u(x)|^q}{|h|^{n+sq}}dh\right)^{\frac{p}{q}}dx\\
						&+\int_{ B(0,2)^c}\left(\int_{ B(0,2^{-1})^c\cap [\{B(-x,3|x|)\setminus B(-x,\frac{|x|}{2})\}\setminus B(-x,1)]} \frac{|u(x+h)-u(x)|^q}{|h|^{n+sq}}dh\right)^{\frac{p}{q}}dx\\
						&:=I_{2,1}+I_{2,2}.
					\end{split}
				\end{equation}
				Above we have $h\in B(-x,3|x|)$ since $|h|<2|x|$ gives $|x+h|\leq|x|+|h|\leq 3|x|$. Now, we first evaluate $I_{2,1}$ as
				\begin{equation}
					\begin{split}
						I_{2,1}&=\int_{ B(0,2)^c}\left(\int_{ B(0,2^{-1})^c\cap [B(-x,\frac{|x|}{2})\setminus B(-x,1)]} \frac{|u(x+h)-u(x)|^q}{|h|^{n+sq}}dh\right)^{\frac{p}{q}}dx\\
						&\leq C(p)\int_{ B(0,2)^c}\left(\int_{  B(-x,\frac{|x|}{2})\setminus B(-x,1)} \frac{1}{|x+h|^{q\lambda}|h|^{n+sq}}dh\right)^{\frac{p}{q}}dx\\
						&\leq C(p,s,q,n)\int_{B(0,2)^c}\left(\int_{B(0,\frac{|x|}{2})\setminus B(0,1)} \frac{1}{|y|^{q\lambda}|x|^{n+sq}}dy\right)^{\frac{p}{q}}dx\end{split}\end{equation}
				since  $h\in  B(-x,\frac{|x|}{2})\setminus B(-x,1)$ gives $|h|>\frac{|x|}{2}$. Hence we obtain that
				\begin{equation}
					I_{2,1}\leq C(q,p,n,s) \int_{B(0,2)^c} \frac{1}{|x|^{sp+p\lambda}}dx<\infty,
				\end{equation}
				since $\lambda+s>\frac{n}{p}$ and  $|x|\geq 2$ gives $\left(\frac{|x|}{2}\right)^{n-q\lambda}\geq 1$. Lastly for $I_{2,2}$ we have
				\begin{equation}
					\begin{split}
						\int_{ B(0,2)^c}&\left(\int_{B(0,2^{-1})^c\cap [\{B(-x,3|x|)\setminus B(-x,\frac{|x|}{2})\}\setminus B(-x,1)]} \frac{|u(x+h)-u(x)|^q}{|h|^{n+sq}}dh\right)^{\frac{p}{q}}dx\\
						&\leq C(p)\int_{B(0,2)^c}\left(\int_{B(0,2^{-1})^c\cap [\{B(-x,3|x|)\setminus B(-x,\frac{|x|}{2})\}\setminus B(-x,1)]} \frac{1}{|x+h|^{q\lambda}|h|^{n+sq}}dh\right)^{\frac{p}{q}}dx\\
						&\leq C(p) \int_{ B(0,2)^c}\left(\int_{ B(0,2^{-1})^c\cap [\{B(-x,3|x|)\setminus B(-x,\frac{|x|}{2})\}\setminus B(-x,1)]} \frac{1}{|x|^{q\lambda}|h|^{n+sq}}dh\right)^{\frac{p}{q}}dx\\
						&\leq C(p)\int_{B(0,2)^c}\left(\int^{3|x|}_\frac{|x|}{2} \frac{t^{n-1-n-sq}}{|x|^{q\lambda}}dt\right)^{\frac{p}{q}}dx\\
						&\leq C(p,s,q,n)\int_{B(0,2)^c}\frac{1}{|x|^{p(\lambda+s)}} dx<\infty,
					\end{split}
				\end{equation}
				since $\lambda+s>\frac{n}{p}$ and $h\in B(-x,3|x|)\setminus B(-x,\frac{|x|}{2})$ gives $|x+h|>\frac{|x|}{2}$.
				Hence $u\in \dot{W}_q^{s,p}(\R^n)$ and clearly $u-c\notin L^p(\R^n)$ for any $p>0$ since otherwise $u$ must converge to $c$ which is not true. Also for $c=0$, $u\notin L^p(\R^n)$.
			\end{proof}	

				\begin{proof}[\textbf{Proof of Proposition~\ref{sp geq n lemma}}]
					Define the function $u:\R^n\to \R$ as
					\begin{equation}
						u(x)=\begin{cases}
							\log\log|x|, & \text{   if $|x|>e$},\\
							0, & \text{  if $|x|\leq e$}.
						\end{cases}
					\end{equation}
					Clearly, $\lim_{t\to \infty}u\circ \gamma (t)$ does not exist for any curve $\gamma:[0,\infty)\to\R^n$ with $\lim_{t\to\infty}\gamma(t)=\infty$.
					Note that
					\begin{equation}\label{nabla eqn}
						|\nabla u(x)| = \begin{cases}
							\frac{1}{|x|\log |x|}, \qquad &\text{if } |x|>e,\\
							0, \qquad &\text{if } |x|\leq e.
						\end{cases}
					\end{equation}
					As at the beginning of the proof of Proposition~\ref{u-c for no c}, the fundamental theorem of calculus gives  
					\begin{equation}\label{Case_x in R^n}
						\begin{split}
							\int_{ \R^n}\left(\int_{ B(0,1)} \frac{|u(x+h)-u(x)|^q}{|h|^{n+sq}}dh\right)^{\frac{p}{q}}dx
							&\leq  \int_{ \R^n}(\sup_{y\in B(0,1)}|\nabla u (x+y)|)^pdx\left(\int_{ B(0,1)}  \frac{1}{|h|^{n+sq-q}}dh\right)^{\frac{p}{q}}\\
							&\leq C(n,p,q,s) \int_{ \R^n}(\sup_{z\in B(x,1)}|\nabla u (z)|)^pdx.
						\end{split}
					\end{equation}
					Hence we get
					\begin{equation}
						\begin{split}
							\int_{ \R^n}&\left(\int_{B(0,1)} \frac{|u(x+h)-u(x)|^q}{|h|^{n+sq}}dh\right)^{\frac{p}{q}}dx\\
							& \qquad\leq  C(n,p,q,s)\left(\int_{B(0,e+1)}(\sup_{z\in B(x,1)}|\nabla u (z)|)^pdx + \int_{B(0,e+1)^c}(\sup_{z\in B(x,1)}|\nabla u (z)|)^pdx\right)\\
							&\qquad\leq C(n,p,q,s)\left(\frac{1}{e}+\int_{ B(0,e+1)^c}\frac{1}{(|x|-1)^p\log^p(|x|-1)}dx\right)\\
							&\qquad\leq C(n,p,q,s)\left( \frac{1}{e}+\int_{e+1}^\infty \frac{t^{n-p-1}dt}{(\log t)^p}\right)
							< \infty,
						\end{split}
					\end{equation}
					since $sp\geq n$ gives $p>n>1$. Next, 
					\begin{equation}\label{Case |x|<e}
						\begin{split}
							\int_{B(0,e)}&\left(\int_{h\in \R^n\setminus B(0,1)} \frac{|u(x+h)-u(x)|^q}{|h|^{n+sq}}dh \right)^\frac{p}{q}dx\\
							&= \int_{B(0,e)}\left(\sum_{j=1}^\infty \int_{ B(0,2^{j}) \setminus B(0,2^{j-1})} \frac{|u(x+h)|^q}{|h|^{n+sq}}dh\right)^\frac{p}{q}dx\\
							&\leq \int_{B(0,e)}\left(\sum_{j=1}^\infty \int_{ B(0,2^{j}) \setminus B(0,2^{j-1})} 2^{-jsq} \left| \log\log (2^{j}+|x|)\right|^q 2^{-jn}dh\right)^\frac{p}{q}dx\\
							&\leq C(n) \int_{B(0,e)}\left(\sum_{j=1}^\infty  2^{-jsq} \left| \log\log (2^{j}+|x|)\right|^q\right)^\frac{p}{q} dx.
						\end{split}
					\end{equation}
					Now, since $\log\log$ is an increasing function, we have by \eqref{Case |x|<e}
					\begin{equation}
						\begin{split}
							\int_{B(0,e)}\left(\int_{ \R^n\setminus B(0,1)} \frac{|u(x+h)-u(x)|^q}{|h|^{n+sq}}dh \right)^\frac{p}{q}dx
							&\leq \left(\sum_{j=1}^\infty  2^{-jsq} \left| \log\log (2^{j}+e)\right|^q\right)^\frac{p}{q}\\
							&\leq \left(\sum_{j=1}^\infty  2^{-jsq} \left| \log\log 2^{j+3}\right|^q\right)^\frac{p}{q} \\
							&\leq \left(\sum_{j=1}^\infty 2^{-jsq}\log^q 2(j+3)\right)^\frac{p}{q}<\infty.
						\end{split}
					\end{equation}
					
					For the last case, we have
                    \begin{equation}\label{p<q}
						\begin{split}
							\int_{B(0,e)^c}&\left(\int_{ \R^n\setminus B(0,1)} \frac{|u(x+h)-u(x)|^q}{|h|^{n+sq}}dh\right)^{\frac{p}{q}}dx\\
							&\leq C(p,q)\left( \int_{B(0,e)^c}\left(\int_{ B(0,|x|) \setminus B(0,1)}
                            \frac{|u(x+h)-u(x)|^q}{|h|^{n+sq}}dh\right)^{\frac{p}{q}}dx\right.\\
                            &\qquad\qquad+\left.\int_{B(0,e)^c}\left(\int_{ \R^n\setminus B(0,|x|) }
                            \frac{|u(x+h)-u(x)|^q}{|h|^{n+sq}}dh\right)^{\frac{p}{q}}dx\right)\\
                            &:= C(p,q)(I_1+I_2)
\end{split}\end{equation}
Let us first estimate $I_1$:
\begin{equation}
    \begin{split}
I_1&=\int_{B(0,e)^c}\left(\int_{ B(0,|x|) \setminus B(0,1)}
                            \frac{|u(x+h)-u(x)|^q}{|h|^{n+sq}}dh\right)^{\frac{p}{q}}dx\\
    &=   \int_{B(0,e)^c}    \left(\int_{ B(0,|x|) \setminus B(0,1)}
                           \left| \log\frac{\log(|x|+|h|)}{\log|x|}\right|^q\frac{dh}{|h|^{n+sq}}\right)^{\frac{p}{q}}dx \\
                            &=   \int_{B(0,e)^c}    \left(\int_{ B(0,|x|) \setminus B(0,1)}
                           \left| \log\frac{\log(|x|+|h|)-\log|x|+\log|x|}{\log|x|}\right|^q\frac{dh}{|h|^{n+sq}}\right)^{\frac{p}{q}}dx\\
                            &\leq   \int_{B(0,e)^c}    \left(\int_{ B(0,|x|) \setminus B(0,1)}
                           \left| \frac{\log\left(\frac{|x|+|h|}{|x|}\right)}{\log|x|}\right|^q\frac{dh}{|h|^{n+sq}}\right)^{\frac{p}{q}}dx\\
                           &\leq   \int_{B(0,e)^c}    \left(\int_{ B(0,|x|) \setminus B(0,1)}
                            \frac{|h|^{q-sq-n}}{|x|^q\log^q|x|}dh\right)^{\frac{p}{q}}dx\\
                            &\leq C(n,p,q)  \int_{B(0,e)^c}    \left(\int_{ 1}^{|x|}
                            \frac{t^{q-sq-1}}{|x|^q\log^q|x|}dt\right)^{\frac{p}{q}}dx\\
                             &\leq  C(n,p,q)  \int_{B(0,e)^c}    \frac{dx}{|x|^{sp}\log^p|x|}<\infty
                          \end{split}
\end{equation}
since $q-sq>0$ and $sp\geq n$. For $I_2$ we have
\begin{equation}
    \begin{split}
I_2&=\int_{B(0,e)^c}\left(\int_{\R^n\setminus B(0,|x|)}
                            \frac{|u(x+h)-u(x)|^q}{|h|^{n+sq}}dh\right)^{\frac{p}{q}}dx\\
                          &\leq C(n,p,q)   \int_{B(0,e)^c}    \left(\int_{|x|}^\infty
                           t^{-sq-1}\left| \log\frac{\log(|x|+t)}{\log|x|}\right|^qdt\right)^{\frac{p}{q}}dx \\
                             &\leq  C(n,p,q)  \int_{B(0,e)^c}    \left(\sum_{l=1}^\infty\int_{2^{l-1}|x|}^{2^l|x|}
                           t^{-sq-1}\left| \log\frac{\log(|x|+t)}{\log|x|}\right|^qdt\right)^{\frac{p}{q}}dx \\
                             &\leq C(n,s,p,q)   \int_{B(0,e)^c}    \left(\sum_{l=1}^\infty
                           (2^l|x|)^{-sq}\left| \log\frac{\log(|x|+2^l|x|)}{\log|x|}\right|^q\right)^{\frac{p}{q}}dx \\
 &C(n,s,p,q) \leq   \int_{B(0,e)^c}    \left(\sum_{l=1}^\infty
                           (2^l|x|)^{-sq}\left| \frac{\log2^{l+1}}{\log|x|}\right|^q\right)^{\frac{p}{q}}dx \\
                            &C(n,s,p,q) \leq   \int_{B(0,e)^c}\frac{dx}{|x|^{sp}\log^p|x|}    \left(\sum_{l=1}^\infty
                           l^q2^{-lsq}\right)^{\frac{p}{q}} <\infty
 \end{split}
\end{equation}
since $sp\geq n$ and $\sum_{l=1}^\infty
                           l^q2^{-lsq}<\infty.$ Therefore, \eqref{p<q} is finite, and we conclude that $u\in \dot{W}^{s,p}_q(\R^n)$.

			\end{proof}

			\begin{proof}[\textbf{Proof of Proposition~\ref{sp>n lemma}}]
				Take a function $v\in C_c^{\infty}(\R^n)$ such that
				\begin{equation}
					v(x)=\begin{cases}
						1, & \text{   on $S^{n-1}$},\\
						0, & \text{  on $B(0,2^{-1})\cup B(0,2)^c$}
					\end{cases}
				\end{equation}
				and 
				\begin{equation}
					|\nabla v| \leq 2.    
				\end{equation}
				Then $|\nabla v|=0$ on $B(0,2^{-1})\cup B(0,2)^c$. We first show that $v\in \dot{W}^{s,p}_q(\R^n)$. We will only deal with the case $q<1$, since for $q\geq1$, $C_c^\infty(\R^n)\subseteq W^{s,p}_q(\R^n)$ when $sp\geq n$ \cite[Remark~3.8]{PrEe}. First note that again
				
				\begin{equation}
					\begin{split}
						\int_{\R^n}\left(\int_{ B(0,2)} \frac{|v(x+h)-v(x)|^q}{|h|^{n+sq}}dh\right)^\frac{p}{q}dx
						&\leq \int_{ \R^n}(\max_{y\in B(0,2)}|\nabla v (x+y)|)^pdx\left(\int_{ B(0,1)} \frac{1}{|h|^{n+sq-q}}dh\right)^{\frac{p}{q}}\\
						&\leq C(n,p,q,s) \int_{ B(0,4)}(\sup_{z\in B(x,2)}|\nabla v (z)|)^pdx<\infty,
					\end{split}
				\end{equation}
				since $|\nabla v| \leq 2$. Also

				\begin{equation}
					\begin{split}
						\int_{ B(0,2)}\left(\int_{B(0,2)^c} \frac{|v(x+h)-v(x)|^q}{|h|^{n+sq}}dh\right)^\frac{p}{q}dx&=\int_{ B(0,2)}\left(\int_{ B(0,2)^c} \frac{|v(x)|^q}{|h|^{n+sq}}dh\right)^\frac{p}{q}dx\\
						&\leq\int_{B(0,2)}|v(x)|^pdx\left(\int_{ B(0,2)^c} \frac{1}{|h|^{n+sq}}dh\right)^\frac{p}{q}<\infty.
				\end{split}\end{equation}
				and for $q<1$ we have
				\begin{equation}
					\begin{split}
						\int_{B(0,2)^c}\left(\int_{ B(0,2)^c} \frac{|v(x+h)-v(x)|^q}{|h|^{n+sq}}dh\right)^\frac{p}{q}dx&=\int_{ B(0,2)^c}\left(\sum_{j=1}^{\infty}\int_{B(0,2^{j+1})\setminus B(0,2^j)}\frac{|v(x+h)|^q}{2^{j(n+sq)}}dh\right)^\frac{p}{q}dx\\
						&=\int_{ B(0,2)^c}\left(\sum_{j=1}^{\infty}2^{-jsq}\intav_{B(0,2^{j+1})\setminus B(0,2^j)}|v(x+h)|^qdh\right)^\frac{p}{q}dx\\
						&\leq \int_{x\in B(0,2)^c}\left(\sum_{j=1}^{\infty}2^{-jsq}\left|\intav_{B(x,2^{j+1})}{v}(y)dy\right|^q\right)^\frac{p}{q}dx
					\end{split}
				\end{equation}		
				Now	by the strong type estimate for the Hardy-Littlewood maximal operator, we get that
				\begin{equation}
					\begin{split}
						\int_{B(0,2)^c}\left(\int_{ B(0,2)^c} \frac{|v(x+h)-v(x)|^q}{|h|^{n+sq}}dh\right)^\frac{p}{q}dx&  \leq \int_{B(0,2)^c}\left(\sum_{j=1}^{\infty}2^{-jsq}|M{v}(x)|^q\right)^\frac{p}{q}dx\\
						&\leq \int_{ B(0,2)^c}|M{v}(x)|^pdx\\
						&\leq \int_{ \R^n}|M{v}(x)|^pdx\\
						&\leq  \int_{ \R^n}|v(x)|^pdx<\infty.
					\end{split}
				\end{equation}
				
				Let us first consider the case $sp>n$. Define $u_j(x)=v(A^{-j}x)$ where
				$A=(C+1)^{\frac{2p}{sp-n}}$ and 
				$C>1$ is as in \eqref{quasinorm constant}. Define 		
				\begin{equation}
					u=\sum_{j=1}^\infty u_j(x).
				\end{equation}
				Then clearly, $\lim_{t\to \infty}u\circ \gamma (t)$ does not exist for any curve $\gamma:[0,\infty)\to\R^n$ with $\lim_{t\to\infty}\gamma(t)=\infty$. Now we will show that $u\in\dot{W}^{s,p}_q(\R^n)$. We have

				\begin{equation}\label{uj estimate}
					\begin{split}
						[u_j]_{W^{s,p}_q(\R^n)}^p&=\int_{\R^n}\left(\int_{\R^n}\frac{|u_j(x)-u_j(y)|^q}{|x-y|^{n+sq}}dx\right)^\frac{p}{q}dy\\
						&\leq \int_{\R^n}\left(\int_{\R^n}\frac{|(v(A^{-j}x)-v(A^{-j}y)|^q}{|x-y|^{n+sq}}dx\right)^\frac{p}{q}dy\\
						&\leq A^{j(n-sp)} \int_{\R^n}\left(\int_{\R^n}\frac{|v(x)-v(y)|^q}{|x-y|^{n+sq}} dx\right)^\frac{p}{q}dy.
					\end{split}
				\end{equation}
				Now, since $sp> n$,
				\begin{equation}
					\begin{split}
						[u]_{W^{s,p}_q(\R^n)}
						\leq \sum_{j=1}^\infty C^j [u_j]_{W^{s,p}_q(\R^n)}
						\leq \sum_{j=1}^\infty (C+1)^{j}[u_j]_{W^{s,p}_q(\R^n)}
						\leq  C(n,p,s)\sum_{j=1}^\infty (C+1)^{-j}
						<\infty.
					\end{split}
				\end{equation}
				For the case $sp=n$, see the remark below.
			\end{proof}
			\begin{remark}
				The function 
				\begin{equation}
					u(x)=\begin{cases}
						\log\log|x|, & \text{   if $|x|>e$},\\
						0, & \text{  if $|x|\leq e$}.
					\end{cases}
				\end{equation}
				belongs to $\dot{W}^{s,p}_q(\R^n)$ for $sp\geq n$ and satisfies $u(x)\to\infty$ as $x\to\infty$. We can also construct a bounded continuous function for the case $sp\geq n$ so that there is no curve going to infinity along which the function has a limit. We construct the wall function as follows. First, let us define
				$u_0:\R^n\to \R$ as
				\begin{equation}
					u_0(x)=\begin{cases}
						\log\log|x|-\log\log R_0e, & \text{   if $R_0e<|x|\leq R_1e$},\\
						\log\log R_1e-\log\log(R_0e+|x|-R_1e), & \text{   if $R_1e<|x|\leq (2R_1-R_0)e$},\\
						0, & \text{   otherwise}.
					\end{cases}
				\end{equation}
				Then $\mathrm{supp}(u)\subset B(0,2R_1e)\setminus B(0,R_0e)$. Now define for each $j\in\N_0$, $u_{j+1}(x)=u_0(R_j^{-1}x)$ where $R_j=(C+1)^{(C+1)^{2j}} $ with $C>1$ is as in \eqref{quasinorm constant}. Then
				$$[u_{j+1}]^p_{W^{s,p}_q(\R^n)}=R_j^{n-sp}[u_0]^p_{W^{s,p}_q(\R^n)}.$$
				Now take 
				\begin{equation}
					v_{j+1}(x)=\frac{u_{j+1}(x)}{\log\log eR_{j+2}}
				\end{equation}
				then 
				$$[v_{j+1}]^p_{W^{s,p}_q(\R^n)}=\frac{[u_{j+1}]^p_{W^{s,p}_q(\R^n)}}{(\log\log eR_{j+2})^p}=R_j^{n-sp}\frac{[u_0]^p_{W^{s,p}_q(\R^n)}}{(\log\log eR_{j+2})^p}.$$
				Therefore, for $v(x)=\sum_{j=0}^\infty v_{j+1}(x)$ we have $$[v]_{W^{s,p}_q(\R^n)}\leq\sum_{j=0}^\infty (C+1)^j[v_j]_{W^{s,p}_q(\R^n)}\leq \sum_{j=0}^\infty(C+1)^jR_j^{(\frac{n}{p}-s)}\frac{[u_0]_{W^{s,p}_q(\R^n)}}{(\log\log R_{j+2}e)}<\infty.$$
				Hence $v\in\dot{W}^{s,p}_q(\R^n)$.
			\end{remark}

			\begin{proof}[\textbf{Proof of Proposition~\ref{sp leq 1 lemma}}]
				Let $sp<1$. Fix a function $v\in C_c^{\infty}(\R^n)$ with
				\begin{equation}
					v(x)=\begin{cases}
						1, & \text{   on $1-\varepsilon\leq|x|\leq 1+\varepsilon$},\\
						0, & \text{  on $B(0,1-2\varepsilon)\cup B(0,1+2\varepsilon)^c$}
					\end{cases}
				\end{equation}
				and so that
				\begin{equation}
					|\nabla v| \leq \frac{2}{\varepsilon}.    
				\end{equation}
				
				Now we will show that $v\in\dot{W}^{s,p}_q(\R^n)$. First, as before
				\begin{equation}
					\begin{split}
						\int_{ \R^n}\left(\int_{B(0,4\varepsilon)} \frac{|v(x+h)-v(x)|^q}{|h|^{n+sq}}dh\right)^\frac{p}{q}dx
						&\leq \int_{ \R^n}(\sup_{y\in B(0,4\varepsilon)}|\nabla v (x+y)|)^pdx\left(\int_{ B(0,4\varepsilon)} \frac{1}{|h|^{n+sq-q}}dh\right)^{\frac{p}{q}}\\
						&\leq  \int_{ \R^n}(\sup_{z\in B(x,4\varepsilon)}|\nabla v (z)|)^pdx\left(\int_{0}^{4\varepsilon}t^{n-1-(n+sq-q)}dt\right)^{\frac{p}{q}}\\
						&\leq \frac{1}{\varepsilon^{sp-p}} \int_{B(0,1+6\varepsilon)\setminus B(0,1-6\varepsilon)}(\sup_{z\in B(x,2\varepsilon)}|\nabla v (z)|)^pdx
						\\& \leq C(n,s,p,q)\varepsilon^{1-sp}.
					\end{split}
				\end{equation}
				Next, we have 
				\begin{equation}
					\begin{split}
						\int_{  B(0,1+2\varepsilon)\setminus B(0,1-2\varepsilon)}&\left(\int_{B(0,4\varepsilon)^c} \frac{|v(x+h)-v(x)|^q}{|h|^{n+sq}}dh\right)^\frac{p}{q}dx\\
						&=\int_{ B(0,1+2\varepsilon)\setminus B(0,1-2\varepsilon))}\left(\int_{ B(0,4\varepsilon)^c} \frac{|v(x)|^q}{|h|^{n+sq}}dh\right)^\frac{p}{q}dx\\
						&\leq\int_{ B(0,1+2\varepsilon)\setminus B(0,1-2\varepsilon)}|v(x)|^pdx\left(\int_{ B(0,4\varepsilon)^c} \frac{1}{|h|^{n+sq}}dh\right)^\frac{p}{q}\leq  C(n,s,p,q)\varepsilon^{1-sp}.
				\end{split}\end{equation}
				
				Lastly, we have,
				\begin{equation}
					\begin{split}
						\int_{ (B(0,1+2\varepsilon)\setminus B(0,1-2\varepsilon))^c}&\left(\int_{ B(0,4\varepsilon)^c} \frac{|v(x+h)-v(x)|^q}{|h|^{n+sq}}dh\right)^\frac{p}{q}dx\\&=\int_{ (B(0,1+2\varepsilon)\setminus B(0,1-2\varepsilon))^c}\left(\int_{ B(0,4\varepsilon)^c}\frac{|v(x+h)|^q}{|h|^{n+sq}}dh\right)^\frac{p}{q}dx\\
						&\leq \int_{(B(0,1-2\varepsilon)\setminus B(0,\varepsilon))\cup B(0,1+2\varepsilon)^c}\left(\int_{ B(-x,1+2\varepsilon)\setminus B(-x,1-2\varepsilon)}\frac{|v(x+h)|^q}{|h|^{n+sq}}dh\right)^\frac{p}{q}dx\\
						&\qquad\qquad\qquad+ \int_{ B(0,\varepsilon)}\left(\int_{ B(-x,1+2\varepsilon)\setminus B(-x,1-2\varepsilon)}\frac{|v(x+h)|^q}{|h|^{n+sq}}dh\right)^\frac{p}{q}dx\\
						&:=I_1+I_2
				\end{split}\end{equation}
				since for any $x\in B(0,1-2\varepsilon)\cup B(0,1+2\varepsilon)^c$, $\mathrm{supp}(v(x+h))\subset B(-x,1+2\varepsilon)\setminus B(-x,1-2\varepsilon)$. Now for $I_1$, we estimate $h\in B(-x,1+2\varepsilon)\setminus B(-x,1-2\varepsilon)$ gives $|h|\approx|x|$. Therefore we have
				\begin{equation}
					\begin{split}
						\int_{(B(0,1-2\varepsilon)\setminus B(0,\varepsilon))\cup B(0,1+2\varepsilon)^c}&\left(\int_{ B(0,4\varepsilon)^c} \frac{|v(x+h)-v(x)|^q}{|h|^{n+sq}}dh\right)^\frac{p}{q}dx\\&\approx \int_{ (B(0,1-2\varepsilon)\setminus B(0,\varepsilon))\cup B(0,1+2\varepsilon)^c}\left(\int_{ B(-x,1+2\varepsilon)\setminus B(-x,1-2\varepsilon)}\frac{1}{|x|^{n+sq}}dh\right)^\frac{p}{q}dx\\
						&\approx C(n,p,q)\varepsilon^{\frac{p}{q}}\int_{ (B(0,1-2\varepsilon)\setminus B(0,\varepsilon))\cup B(0,1+2\varepsilon)^c}\frac{1}{|x|^{\frac{np}{q}+sp}}dx\\
						&\approx C(n,p,q,s)\varepsilon^{\frac{p}{q}},
					\end{split}
				\end{equation}
				since $q<\frac{np}{n-sp}$ gives $\frac{np}{q}+sp>n$, integral over $x$ is finite. Now for $I_2$ we have
				\begin{align}
					\int_{ B(0,\varepsilon)}&\left(\int_{ B(-x,1+2\varepsilon)\setminus B(-x,1-2\varepsilon)}\frac{|v(x+h)|^q}{|h|^{n+sq}}dh\right)^\frac{p}{q}dx\\
					&\leq \varepsilon^n \left(\int_{ B(-x,1+2\varepsilon)\setminus B(-x,1-2\varepsilon)}\frac{1}{|h|^{n+sq}}dh\right)^\frac{p}{q}dx\\
					&\leq C(n,s,p,q) \varepsilon^{n-sp}.
				\end{align}
				Thus we have $v\in\dot{W}^{s,p}_q(\R^n)$. Now let $A=C+1$ where $C>1$ is as in \eqref{quasinorm constant}. Define for each $j\in\N$, $u_j:\R^n\to\R$ as $u_j(x)=v(A^{-j}x)$ and take $\varepsilon=\varepsilon_j$ for $u_j$ where $\varepsilon_j=A^{-j\alpha}$ and $\alpha=2\frac{(n-sp+p)}{\max\{1-sp,pq^{-1},n-sp\}}$. Then $\mathrm{supp}(u_j)=B(0,A^j+2\varepsilon_j)\setminus B(0,A^j-2\varepsilon_j)$. Then  
				\begin{equation}
					\begin{split}
						[u_j]_{W^{s,p}_q(\R^n)}^p&=\int_{\R^n}\left(\int_{\R^n}\frac{|u_j(x)-u_j(y)|^q}{|x-y|^{n+sq}}dx\right)^\frac{p}{q}dy\\
						&\leq \int_{\R^n}\left(\int_{\R^n}\frac{|(v(A^{-j}x)-v(A^{-j}y)|^q}{|x-y|^{n+sq}}dx\right)^\frac{p}{q}dy\\
						&\leq A^{j(n-sp)} \int_{\R^n}\left(\int_{\R^n}\frac{|v(x)-v(y)|^q}{|x-y|^{n+sq}} dx\right)^\frac{p}{q}dy\\
						&\leq C(n,p,q,s)A^{j(n-sp)} (\varepsilon_j^{1-sp}+\varepsilon_j^{\frac{p}{q}}+\varepsilon_j^{n-sp}).
					\end{split}
				\end{equation}
				Define
				$$u(x)=\sum_{j=1}^\infty u_j(x).$$ Then
				\begin{equation}
					\begin{split}
						[u]_{W^{s,p}_q(\R^n)}
						\leq \sum_{j=1}^\infty C^j [u_j]_{W^{s,p}_q(\R^n)}
						\leq\sum_{j=1}^\infty (C+1)^{-j}<\infty.
					\end{split}
				\end{equation}     
				Clearly, $\lim_{t\to \infty}u\circ \gamma (t)$ does not exist for any curve $\gamma:[0,\infty)\to\R^n$ with $\lim_{t\to\infty}\gamma(t)=\infty$.

				\end{proof}	
				\begin{proof}[\textbf{Proof of Proposition~\ref{sp=1}}]
					Denote $A_j= B(0,2^{j+1})\setminus B(0,2^j)$ and let $S_j$ be the middle $(n-1)$-dimensional sphere in $A_j$. Since $\mathcal{H}^{n-1}(S_j)<\infty$, especially, $\mathcal{H}^{n-sp}(S_j)<\infty$ since $sp=1$. Therefore, by Lemma~\ref{haus_cap_AdDa} and Lemma~\ref{capacity comparison}, we have $\mathrm{C}_{s,p}(S_j)=0$ so $\mathrm{Cap}_{s,p,q}(S_j)=0$ as well. 
					Moreover, we have that
					$$ [u]_{W^{s,p}_q(\R^n)}
					\leq\sum_{j=1}^{\infty} C^j[u]_{W^{s,p}_q(A_j)}$$
					where $C>1$ is the constant obtained by the property of quasinorm.
					Thus we can choose a function  $u_j\in {W}_q^{s,p}(\R^n)$ for every $j\in\N$ such that $u_j\geq1$ on a neighbourhood of $S_j$, and $u_j=0$ for $\R^n\setminus A_j$ with $\|u_j\|_{W^{s,p}_q(\R^n)}\leq 2^{-j}$. Consider $u=\sum_{j=1}^{\infty}u_j$. Then 
					\begin{equation}
						\|u\|_{W^{s,p}_q(\R^n)}
						\leq\sum_{j=1}^{\infty}  \|u\|_{W^{s,p}_q(A_j)}
						\leq\sum_{j=1}^{\infty}2^{-j}<\infty.
					\end{equation}
					Clearly, $\lim_{t\to \infty}u\circ \gamma (t)$ does not exist for any curve $\gamma:[0,\infty)\to\R^n$ with $\lim_{t\to\infty}\gamma(t)=\infty$.
				\end{proof}

				\bigskip
				
				\section{Proof of Theorem~\ref{ps-thin theorem}}\label{main}

				\begin{lemma}\label{one length cube conv}
					Let $0<s<1$ and $0<p,q<\infty$ with $sp<n$. Let $\varepsilon>0$ be given. Then, for every $u\in\dot{W}^{s,p}_q(\R^n)$, there exists $M\in\N$ and a unique $K_u\in\R$ so that 
					\begin{equation}\label{uniform conv cube}
						|m_u(Q)-K_u|<\varepsilon
					\end{equation}
					whenever $Q\subset\R^n$ is a cube with $Q\cap B^n(0,M)=\emptyset$ and $\ell(Q)=1$.
				\end{lemma}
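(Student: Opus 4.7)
The plan is to use a telescoping chaining argument based on Lemma~\ref{l1}, exploiting the decay coming from $sp < n$. Set
\[
g(x) := \left(\int_{\R^n} \frac{|u(x)-u(y)|^q}{|x-y|^{n+sq}} \ud y\right)^{p/q},
\]
so that $\|g\|_{L^1(\R^n)} = [u]_{W^{s,p}_q(\R^n)}^p < \infty$. The basic comparison — extracted from Lemma~\ref{l1} by pulling a diameter factor out of the inner Gagliardo integral — reads: for any two cubes $E, F$ contained in a common cube of side $\ell$,
\[
|m_u(E) - m_u(F)|^p \leq C_{n,s,p,q}\, \ell^{sp-n} \int_{E} g(x)\, \ud x.
\]

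First I would show that the candidate limit $K_u := \lim_{k \to \infty} m_u(\mathcal{Q}_{2^k})$ exists, where $\mathcal{Q}_R := [-R/2, R/2]^n$. Applying the basic comparison with $E = \mathcal{Q}_{2^k}$, $F = \mathcal{Q}_{2^{k+1}}$ bounds the telescoping differences $|m_u(\mathcal{Q}_{2^k}) - m_u(\mathcal{Q}_{2^{k+1}})|^p$ by $C\cdot 2^{k(sp-n)} \|g\|_{L^1}$, a summable geometric series since $sp < n$; hence the sequence is Cauchy.

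Next, for a unit cube $Q$ with $\mathrm{dist}(Q, 0) \geq M$, I would build a chain $Q, 2Q, 4Q, \dots, 2^{k_*} Q$ of concentric dilates (each $2^j Q$ having side $2^j$ around the same center $c_Q$), with $k_* \sim \log_2 M$, followed by a single cross-over step comparing $2^{k_*} Q$ with a centered cube $\mathcal{Q}_{2^{k_*+C_1}}$ of comparable side containing it, and finally the tail of the centered telescope to reach $K_u$. Each link satisfies
\[
|m_u(2^j Q) - m_u(2^{j+1} Q)|^p \leq C\, 2^{j(sp-n)} \int_{2^{j+1} Q} g.
\]
I would split the summation at the threshold $J^* \sim \log_2(M/\sqrt{n})$: for $j \leq J^*$ the dilate $2^{j+1}Q$ remains inside $B(0, M/4)^c$, so $\int_{2^{j+1}Q} g \leq \int_{B(0, M/4)^c} g =: \eta_M$ which tends to $0$ as $M\to\infty$; for $j > J^*$ the prefactor $2^{j(sp-n)}$ is already of order $M^{sp-n}$, so crudely bounding by $\|g\|_{L^1}$ still gives a vanishing contribution. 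Combining the links via the triangle inequality when $p \geq 1$, or via the elementary bound $(\sum_j a_j)^p \leq \sum_j a_j^p$ when $p < 1$, then yields $|m_u(Q) - K_u| < \varepsilon$ uniformly over all such $Q$ for $M$ sufficiently large; uniqueness of $K_u$ is automatic since it is defined by an explicit limit.

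The main obstacle I anticipate is the careful bookkeeping of the chain rather than any conceptual difficulty. In particular, the cross-over step, comparing a shifted large cube with a centered one of comparable side, needs a small variant of the basic comparison for two cubes sitting inside a common container (both having side comparable to that container), which follows from Lemma~\ref{l1} by the same diameter trick. The parallel treatment of $p \geq 1$ and $p < 1$ (and likewise $q < 1$, where the quasinorm constant from \eqref{quasinorm constant} must be tracked) forces the summation to be carried on $p$-th powers when $p<1$, but the geometric decay from $sp < n$ absorbs the loss without essential trouble.
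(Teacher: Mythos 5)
Your chaining strategy (comparing medians of comparable cubes via Lemma~\ref{l1}, with geometric decay coming from $sp<n$) is viable, but there is one concrete defect in the execution: the cross-over step, as written, does not exist for most of the cubes the lemma is about. You take $k_*\sim\log_2 M$, so the last dilate $2^{k_*}Q$ has side about $M$ and is still centred at $c_Q$, while the lemma requires uniformity over \emph{all} unit cubes with $Q\cap B^n(0,M)=\emptyset$; such a $Q$ may lie at distance $D\gg M$ from the origin. For those cubes no centred cube of side comparable to $2^{k_*}\sim M$ can contain $2^{k_*}Q$, and if you enlarge the centred cube until it does (side $\sim D$), your basic comparison degenerates: the constant extracted from Lemma~\ref{l1} scales like $\ell^{(n+sq)p/q}\,|E|^{-1}|F|^{-p/q}$, which is $\approx\ell^{sp-n}$ only when \emph{both} cubes have side comparable to the container $\ell$ -- exactly what fails here (with $|E|,|F|\sim M^n$ and $\ell\sim D$ the factor blows up like $D^{np/q+sp}$). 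So the argument as written proves the estimate only for cubes at distance comparable to $M$, not for all cubes outside $B^n(0,M)$.

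The gap is repairable inside your own framework: set $D=\dist(Q,0)\ge M$ and run the concentric chain up to $k_*\sim\log_2 D$ (not $\log_2 M$), splitting at $J^*\sim\log_2(D/\sqrt{n})$. For $j\le J^*$ the dilates $2^{j+1}Q$ stay in $B(0,D/2)^c\subset B(0,M/2)^c$, so those links are controlled by $\int_{B(0,M/2)^c}g$, which is small for large $M$; for the boundedly many remaining links, for the cross-over at scale $\sim D$, and for the centred telescope tail from scale $\sim D$, the prefactor is at most $C\,D^{sp-n}\le C\,M^{sp-n}$, so all contributions vanish as $M\to\infty$, uniformly in $Q$. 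With this correction your proof is complete and follows a genuinely different route from the paper's: the paper chains along pairwise disjoint cubes of doubling size marching \emph{away} from the origin, so the whole chain stays in $B^n(0,t)^c$ and $|m_u(Q)-K_u|$ is bounded directly by the tail seminorm $\bigl(\int_{B^n(0,t)^c}\bigl(\int_{B^n(0,t)^c}\cdots\bigr)^{p/q}dy\bigr)^{1/p}$, whereas you chain inward to a fixed centred exhaustion and trade that tail seminorm for the $L^1$-tail of $g$ plus the decay $M^{sp-n}$; both mechanisms rest only on Lemma~\ref{l1} and $sp<n$. (Minor point: the median chaining never invokes the quasinorm constant of \eqref{quasinorm constant}, so no extra care is needed for $q<1$ beyond the $p<1$ summation you already mention.)
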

				\begin{proof}
					It suffices to establish the claim for cubes $Q$ with $\ell(Q)=1$	and  $Q\cap B^n(0,M)=\emptyset$. Recall that our cubes are required to have edges parallel to the coordinate axes. Fix $M\geq1$ and such a $Q$. Then there is $j\in\{1,2,\cdots,n\}$ so that $|x_j|> M$ holds for each $x=(x_1,x_2,\cdots,x_n)\in Q$. For simplicity, assume that $j=n$ and that $|x_n|>M$ for each $x\in Q$. Define $\tilde{Q}$ to be the orthogonal projection of $Q$ in $\R^{n-1}\times \{0\}$. Then $Q=\tilde{Q}\times [t,t+\ell(Q))$ for some $t>M$ by our convention that cubes are of the form $[a_1,b_1)\times \cdots \times [a_n,b_n)$. Define $Q_j=2^j\tilde{Q}\times (t+2^jl({Q}),t+2^{j+1}l({Q}))$ for $j\geq1$. Then $Q_0=Q$, the cubes $Q_j$ are pairwise disjoint and $Q_j\cap B^n(0,M)=\emptyset$ for all $j\in\N_0$, in fact $Q_j\cap B^n(0,t)=\emptyset$. 
					Now let $j>i$. Then we have by H\"{o}lder's inequality and Lemma \ref{l1} that
					\begin{align}\label{1.3.1}
						|m_u({Q}_j)-m_u({Q}_i)|
						&\leq\sum_{k=i}^{j-1}|m_u({Q}_{k+1})-m_u({Q}_k)|\\
						&\leq C(p,q)\sum_{k=i}^{j-1}\left(\intav_{{Q}_{k+1}}\left(\intav_{{Q}_{k}}|u(x)-u(y)|^qdx\right)^{\frac{p}{q}}dy\right)^\frac{1}{p}\\
						&\leq C(p,q)\sum_{k=i}^{j-1}\diam({Q}_k\cup {Q}_{k+1})^{\frac{n+sq}{q}}\left(\intav_{{Q}_{k+1}}\left(\intav_{{Q}_{k}}\frac{|u(x)-u(y)|^q}{|x-y|^{n+sq}}dx\right)^{\frac{p}{q}}dy\right)^\frac{1}{p}\\
						&\leq C(p,q,s,n)\sum_{k=i}^{\infty}\frac{2^{k\left(\frac{n+sq}{q}\right)}}{2^{kn\left(\frac{1}{p}+\frac{1}{q}\right)}} \left(\int_{{Q}_{k+1}}\left(\int_{{Q}_{k}}\frac{|u(x)-u(y)|^q}{|x-y|^{n+sq}}dx\right)^{\frac{p}{q}}dy\right)^\frac{1}{p}\\
						&\leq  C(p,q,s,n)\left(\int_{B^n(0,t)^c}\left(\int_{B^n(0,t)^c}\frac{|u(x)-u(y)|^q}{|x-y|^{n+sq}}dx\right)^{\frac{p}{q}}dy\right)^\frac{1}{p}\sum_{k=i}^{\infty}2^{k\left(s-\frac{n}{p}\right)}.
					\end{align}
					Let $\varepsilon>0$. By $sp<n$ and inequality \eqref{1.3.1} we conclude that $$|m_u({Q}_j)-m_u({Q}_i)|<\varepsilon$$
					when $i<j$ and $i$ is sufficiently large. Hence $\{m_u({Q}_j)\}_{j\in\N}$ is a Cauchy sequence and therefore there exists a constant $K_u^{({Q})}\in\R$ such that $\lim_{j\to\infty}m_u({Q}_j)=K_u^{({Q})}$. We show that this limit is independent of $Q$. Consider two distinct cubes $Q, R$ of edge length 1 in $\R^n$ and define a sequence $\{{R}_j\}_{j\in\N_0}$ in a similar manner to $\{{Q}_j\}_{j\in\N_0}$ so that  
					$$\lim_{j\to\infty}m_u({Q}_j)=K_u^{({Q})}, \hspace{5pt}\lim_{j\to\infty}m_u({R}_j)=K_u^{({R})}.$$
					Then by Lemma \ref{l1} we have
					\begin{align}\label{1.3.2}
						|m_u({Q}_j)-m_u({R}_j)|&\leq C(p,q)\left(\intav_{{Q}_{j}}\left(\intav_{{R}_j}|u(x)-u(y)|^qdx\right)^{\frac{p}{q}}dy\right)^\frac{1}{p}\\
						&\leq C(p,q)\frac{(\mathrm{diam}(Q,R)2^j)^{\left(\frac{n+sq}{q}\right)}}{2^{jn\left(\frac{1}{p}+\frac{1}{q}\right)}}\left(\int_{{Q}_{j}}\left(\int_{{R}_j}\frac{|u(x)-u(y)|^q}{|x-y|^{n+sq}}dx\right)^{\frac{p}{q}}dy\right)^\frac{1}{p}\\
						&\leq C(p,q,s,n)2^{j\left(s-\frac{n}{p}\right)}\left(\int_{\R^n}\left(\int_{\R^n}\frac{|u(x)-u(y)|^q}{|x-y|^{n+sq}}dx\right)^{\frac{p}{q}}dy\right)^\frac{1}{p}.\end{align}
					Thus by inequality \eqref{1.3.2}, since $sp<n$ we conclude that
					$$\lim_{j\to\infty}|m_u({Q}_j)-m_u({R}_j)|=0.$$
					Hence, the limit is unique and we will denote it $K_u$. Now we have by \eqref{1.3.1},
					\begin{align}\label{limit-cube-p>1}
						|m_u(Q)-K_u|&=\lim_{j\to\infty}|m_u(Q)-m_u({Q}_j)|\\
						&=\lim_{j\to\infty}|m_u({Q}_0)-m_u({Q}_j)|\\
						&\leq C(p,q,s,n)\left(\int_{B^n(0,t)^c}\left(\int_{B^n(0,t)^c}\frac{|u(x)-u(y)|^q}{|x-y|^{n+sq}}dx\right)^{\frac{p}{q}}dy\right)^\frac{1}{p}
					\end{align}
					where $t$ is as chosen at the beginning of the proof. Now, since $u\in\dot{W}^{s,p}_q(\R^n)$, for any $\varepsilon>0$, there exists $M\in\N$ so that, for $t\geq M$, 
					\begin{equation} \label{ball complement estimate}
						C(p,q,s,n) \left(\int_{B^n(0,t)^c}\left(\int_{B^n(0,t)^c}\frac{|u(x)-u(y)|^q}{|x-y|^{n+sq}}dx\right)^{\frac{p}{q}}dy\right)^\frac{1}{p}<\varepsilon
					\end{equation}
					where $C$ is the constant in \eqref{limit-cube-p>1}. We combine \eqref{limit-cube-p>1} and \eqref{ball complement estimate} to conclude that
					\begin{equation}
						|m_u(Q)-K_u|<\varepsilon.
					\end{equation}
					
					%
				\end{proof}
				\begin{remark}
					The claim of the above lemma actually holds for any cube $Q\subset B^n(0,M)^c\subset \R^n$ not necessarily having edges parallel to coordinate axes. For our purposes, it is enough to consider cubes having edges parallel to coordinate axes.
				\end{remark}

				\begin{lemma}\label{l3}
					Let $0<s<1$ and $0<p,q<\infty$ with $sp<n$. Let $Q$ be a cube in $\R^n$ with $\ell(Q)=1$. For any $\lambda>n-sp$ there is $C=C(n,s,p,q,\lambda)>0$ such that for all $u\in \dot{W}^{s,p}_q(\R^n)$, we have
					\begin{equation}\label{haus-cube}
						\sup_{t>0}t^p\mathcal{H}^{\lambda}_{\infty}(\{x\in Q \ | \ |u(x)-m_u(Q)|>t\})\leq 
						C\int_{Q}\left(\int_{Q}\frac{|u(z)-u(y)|^q}{|z-y|^{n+sq}}dz\right)^{\frac{p}{q}}dy.
					\end{equation}
				\end{lemma}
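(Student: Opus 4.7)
Set $g(y):=\left(\int_Q \frac{|u(z)-u(y)|^q}{|z-y|^{n+sq}}\,dz\right)^{p/q}$, so that the right-hand side of \eqref{haus-cube} equals $\int_Q g$. My plan is to telescope $u(x)-m_u(Q)$ along the nested dyadic subcubes of $Q$ containing $x$, isolate the layer on which each difference is large via a stopping-time family, and convert that family into an efficient covering of the super-level set $E_t:=\{x\in Q:|u(x)-m_u(Q)|>t\}$. For $x\in Q$ and $j\in\N_0$ write $Q^{(j)}(x)$ for the unique cube of $\mathcal{D}_j(Q)$ containing $x$. Applying Lemma~\ref{l1} with $E=Q^{(j+1)}(x)\subset F=Q^{(j)}(x)$ and inserting the kernel $|z-y|^{-(n+sq)}$ via the trivial bound $|z-y|\le \diam Q^{(j)}(x)\le C_n 2^{-j}$ yields, after routine arithmetic, the key pointwise estimate
\begin{equation}\label{plan-dyadic-bound}
|m_u(Q^{(j+1)}(x))-m_u(Q^{(j)}(x))|^p \le C(n,p,q,s)\,2^{j(n-sp)}\int_{Q^{(j+1)}(x)} g(y)\,dy.
\end{equation}

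Fix $\beta\in\bigl(0,\tfrac{\lambda+sp-n}{p}\bigr)$, a nonempty interval since $\lambda>n-sp$, and set $\alpha_j:=(1-2^{-\beta})2^{-j\beta}$ so that $\sum_{j\ge 0}\alpha_j=1$. By a Lebesgue-type differentiation theorem for medians, for a.e.\ $x\in Q$ the sequence $m_u(Q^{(j)}(x))$ converges to $u(x)$; denote by $G\subset Q$ this full-measure set. For $x\in G\cap E_t$, telescoping along the dyadic chain together with the pigeonhole principle (if every layer contributes at most $t\alpha_j$, the total is at most $t$) forces some $j=j(x)$ with $|m_u(Q^{(j+1)}(x))-m_u(Q^{(j)}(x))|>t\alpha_j$. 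Collect these cubes into the stopping family
\begin{equation}
\mathcal{B}_j:=\{P\in\mathcal{D}_{j+1}(Q)\,:\,|m_u(P)-m_u(\hat P)|>t\alpha_j\},
\end{equation}
where $\hat P$ is the parent of $P$ in $\mathcal{D}_j(Q)$. Then $G\cap E_t\subset\bigcup_{j\ge 0}\bigcup_{P\in\mathcal{B}_j}P$, and combining \eqref{plan-dyadic-bound} with the pairwise disjointness of cubes at level $j+1$ inside $Q$ yields $\#\mathcal{B}_j\cdot t^p\alpha_j^p \le C(n,p,q,s)\,2^{j(n-sp)}\int_Q g$.

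Estimating $\mathcal{H}^\lambda_\infty(G\cap E_t)$ with this explicit cover, using $\diam P\le C_n 2^{-(j+1)}$ for $P\in\mathcal{B}_j$, gives
\begin{equation}
\mathcal{H}^\lambda_\infty(G\cap E_t) \le C_n\sum_{j\ge 0}\#\mathcal{B}_j\cdot 2^{-(j+1)\lambda} \le C\,t^{-p}\int_Q g\cdot\sum_{j\ge 0}2^{j(n-sp-\lambda)}\alpha_j^{-p}.
\end{equation}
Since $\alpha_j^{-p}$ grows like $2^{jp\beta}$, the series reduces to $\sum_{j}2^{j(n-sp-\lambda+p\beta)}$, convergent geometrically by the choice $\beta<(\lambda+sp-n)/p$. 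This establishes \eqref{haus-cube} on $G\cap E_t$.

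The main obstacle I anticipate is the treatment of the exceptional set $N:=Q\setminus G$: although $|N|=0$, the content $\mathcal{H}^\lambda_\infty(N)$ need not a priori vanish. To conclude I will argue that the set of $x$ where the dyadic medians of a function in $\dot W^{s,p}_q(\R^n)$ fail to converge to the precise representative has zero $(s,p,q)$-capacity, and then invoke Lemma~\ref{haus_cap_AdDa} together with Lemma~\ref{capacity comparison} to convert zero capacity into zero $\mathcal{H}^\lambda$-content for every $\lambda>n-sp$. Apart from this capacity-theoretic reduction, all remaining steps amount to elementary dyadic bookkeeping and iteration of the median estimate from Lemma~\ref{l1}.
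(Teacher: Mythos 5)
Your core argument is essentially the paper's proof of Lemma~\ref{l3}: telescope the median differences along the nested dyadic subcubes, apply Lemma~\ref{l1} together with the trivial kernel insertion to get the pointwise bound $|m_u(\text{child})-m_u(\text{parent})|^p\lesssim 2^{j(n-sp)}\int_{\text{child}}g$, run a weighted pigeonhole to produce a stopping cube for each point of the superlevel set, and sum. The only structural difference is bookkeeping: the paper extracts a pairwise disjoint maximal subfamily of the stopping cubes (via Lemma~\ref{lm-antti}) and sums over it, whereas you keep all bad cubes, count them level by level using within-level disjointness, and sum the resulting geometric series; both are fine, and your exponent $\beta\in\bigl(0,\tfrac{\lambda+sp-n}{p}\bigr)$ is in fact the cleaner normalization of the paper's choice of $\varepsilon$ when $p>1$. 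Up to the full-measure set $G$, your estimate is correct.

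The genuine gap is exactly the point you flag, and your proposed patch does not work as stated. First, Lemma~\ref{haus_cap_AdDa} goes in the opposite direction: it converts finiteness of $\mathcal{H}^{n-sp}$ into vanishing of the Bessel capacity, so it cannot be invoked to convert zero capacity into zero $\mathcal{H}^{\lambda}$-content; the implication you need is a different theorem, which you neither cite correctly nor prove. Second, even granting that implication, the capacity machinery (Lemma~\ref{capacity comparison}) is available only for $p>1$ and $1\le q<\frac{np}{n-sp}$, while Lemma~\ref{l3} is asserted for all $0<p,q<\infty$ (the paper explicitly remarks that capacity results for $q<1$ are not known), so your route cannot cover the stated parameter range; and the claim that the set of non-convergence of dyadic medians has zero $(s,p,q)$-capacity is itself left as an unproved assertion. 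The paper avoids the exceptional set altogether: by the convention of Definition~\ref{goog-rep}, $u(x)=\limsup_{|R|\to0,\ x\in R}m_u(R)$, so for every $x\in E_t$ one has $t<|u(x)-m_u(Q)|\le\limsup_{i}|m_u(A_i)-m_u(Q)|\le\sum_{l}|m_u(A_{l+1})-m_u(A_l)|$ along the dyadic chain, and the pigeonhole then applies to every point of $E_t$, with no null set left over. Replacing your a.e.\ differentiation step by this limsup characterization of the precise representative (if one wants to be punctilious, comparing an arbitrary small cube containing $x$ with the dyadic cube of comparable scale by one more application of Lemma~\ref{l1}) removes any need for capacity input and restores the full range of $p$ and $q$.
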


				\begin{proof}
					Fix $t>0$. Define 
					\begin{equation}E_t:=\{x\in Q \ | \ |u(x)-m_u(Q)|>t\}.\end{equation}
					Let $x\in E_t$ and $\{A_i\}_{i\in\N}$ be a sequence of cubes obtained by dyadically decomposing $Q$ so that  $A_1=Q, A_i\subset Q$ for all $i\in \N,$ $ l(A_i)=2^{-i+1}l(Q)$ and $x\in A_i$ for all $i$.
					We use Lemma~\ref{l1} on $A_l$ and $A_{l+1}$ so that we have
					\begin{align}
						t<|u(x)-m_u(Q)|&=\limsup_{i\to\infty}|m_u(A_i)-m_u(Q)|\\
						&=\limsup_{i\to\infty}|m_u(A_i)-m_u(A_1)|\\
						&\leq\sum_{l=1}^{\infty}|m_u(A_{l+1})-m_u(A_l)|\\
						&\leq C(p,q)\sum_{l=1}^{\infty}\left(\intav_{{A}_{l+1}}\left(\intav_{{A}_{l}}|u(z)-u(y)|^qdz\right)^{\frac{p}{q}}dy\right)^\frac{1}{p}\\
						&\leq C(p,q)\sum_{l=1}^{\infty}\left(\intav_{A_{l+1}}\left(\intav_{A_{l}}\diam(A_l\cup A_{l+1})^{n+sq}\frac{|u(z)-u(y)|^q}{|z-y|^{n+sq}}dz\right)^{\frac{p}{q}}dy\right)^\frac{1}{p}\\
						&\leq C(p,q,s,n)\sum_{l=1}^{\infty}\frac{2^{-l\left(\frac{n+sq}{q}\right)}}{2^{-ln\left(\frac{1}{p}+\frac{1}{q}\right)}}\left(\int_{A_{l+1}}\left(\int_{A_{l}}\frac{|u(z)-u(y)|^q}{|z-y|^{n+sq}}dz\right)^{\frac{p}{q}}dy\right)^\frac{1}{p}\\
						&\leq C(p,q,s,n)\sum_{l=1}^{\infty}2^{-l\left(s-\frac{n}{p}\right)}\left(\int_{A_{l}}\left(\int_{A_{l}}\frac{|u(z)-u(y)|^q}{|z-y|^{n+sq}}dz\right)^{\frac{p}{q}}dy\right)^\frac{1}{p}.\label{1.2.2}
					\end{align}
					Set $\varepsilon=\lambda-n+sp$. Then by inequality \eqref{1.2.2} we have that
					$$t\sum_{l\geq1}2^{-l\varepsilon}\leq C(p,q,s,n) \sum_{l\geq 1}2^{-l\left(s-\frac{n}{p}\right)}\left(\int_{A_{l}}\left(\int_{A_{l}}\frac{|u(z)-u(y)|^q}{|z-y|^{n+sq}}dz\right)^{\frac{p}{q}}dy\right)^\frac{1}{p}.$$
					Hence, for every $x$ in $E_t$,
					we can find $l_x\in\N$ such that $A_{l_x}\subset Q$, $x\in A_{l_x}$ and
					\begin{equation}t2^{-l_x\varepsilon}\leq C(p,q,s,n) 2^{-l_x\left(s-\frac{n}{p}\right)}\left(\int_{A_{l_x}}\left(\int_{A_{l_x}}\frac{|u(z)-u(y)|^q}{|z-y|^{n+sq}}dz\right)^{\frac{p}{q}}dy\right)^\frac{1}{p}.
					\end{equation}
					This gives
					\begin{equation}\label{1.2.3}t^p2^{-l_x\lambda}\leq C(p,q,s,n)\int_{A_{l_x}}\left(\int_{A_{l_x}}\frac{|u(z)-u(y)|^q}{|z-y|^{n+sq}}dz\right)^{\frac{p}{q}}dy.\end{equation}
					Now by considering Lemma \ref{lm-antti}, one finds a countable family $\mathcal{D}_t$ of pairwise disjoint $\{A_{l_x}\}_{x\in E_t}$ such that the cubes in $\mathcal{D}_t$ 
					cover $E_t$.
					Hence we may write $E_t\subset\cup_{A_{l_x}\in\mathcal{D}_t}A_{l_x}=\cup_{l=1}^{\infty}A_l$. Thus, by \eqref{1.2.3}, we conclude that
					\begin{align}
						\mathcal{H}^{\lambda}_{\infty}(E_t)
						&\leq\sum_{l\geq1}\diam(A_l)^{\lambda}\\
						&\leq C(s,p,q,n,\lambda)\sum_{l\geq1}(2^{-l+1}\ell(Q))^{\lambda}\\
						&\leq C(s,p,q,n,\lambda)\frac{1}{t^p}\sum_{l\geq 1}\int_{A_l}\left(\int_{A_l}\frac{|u(z)-u(y)|^q}{|z-y|^{n+sq}}dz\right)^{\frac{p}{q}}dy\\
						&\leq C(s,p,q,n,\lambda)\frac{1}{t^p}\int_{Q}\left(\int_{Q}\frac{|u(z)-u(y)|^q}{|z-y|^{n+sq}}dz\right)^{\frac{p}{q}}dy.
					\end{align}
				\end{proof}

				\begin{proof}[Proof of Theorem~\ref{ps-thin theorem}]
					Let $\{Q_j\}_{j\in \N}$ be the enumeration of all cubes in $\R^n$ having edge-length one, and with vertices in $\Z^n$. Define a sequence $\{a_{Q_j}\}_j$, where $a_{Q_j}:=a_j$, obtained by Lemma~\ref{lm-Klein-original}, is such that
					\begin{equation}\label{new result aj estimate}
						a_j>0,\ \lim_{j\to\infty}a_j=0,\ \sum_{j\in\N}\frac{1}{a_j^p}\int_{Q_j}\left(\int_{\R^n}\frac{|u(x)-u(y)|^q}{|x-y|^{n+sq}}dx\right)^{\frac{p}{q}}dy<\infty.   
					\end{equation}Define, for all $j\in\N$,
					$$E_j:=\{x\in Q_j\mid |u(x)-m_u(Q_j)|>a_j\}.
					$$
					Set $E:=\bigcup_{j\in\N}E_j$. Now
					\begin{align}\label{haus_a_j_spherical_thin}
						\lim_{m\to\infty}\mathcal{H}^{\lambda}_{\infty}(E\cap B(0,m)^c)&=
						\lim_{m\to\infty} \mathcal{H}^{\lambda}_{\infty}(\cup_{j\in\N}E_j \cap B(0,m)^c).
					\end{align}
					Since $Q_{j}\to\infty$ as $j\to\infty$,  for every $m>0$ we find a $i_m\in\N$ so that $Q_{j}\subset B(0,m)^c$ whenever $j>i_m$. Therefore for $\lambda>n-sp$ we have by Lemma~\ref{l3}, \eqref{haus_a_j_spherical_thin} and \eqref{new result aj estimate} that
					\begin{align}
						\lim_{m\to\infty}\mathcal{H}^{\lambda}_{\infty}(E\cap B(0,m)^c)&=\lim_{m\to\infty} \mathcal{H}^{\lambda}_{\infty}(\cup_{j>i_m}E_{j})\\
						&\leq \limsup_{m\to\infty}\sum_{j>i_m}\mathcal{H}^{\lambda}_{\infty}(E_{j})\\
						&\leq \limsup_{m\to\infty}\sum_{j>i_m}\frac{1}{a_{j}^p}\int_{Q_{j}}\left(\int_{\R^n}\frac{|u(x)-u(y)|^q}{|x-y|^{n+sq}}dx\right)^{\frac{p}{q}}dy=0.
					\end{align}
					Let $x\in \R^n\setminus E$. Choosing $Q$ such that $x\in Q$, we get
					\begin{align}\label{conv}
						|u(x)-K_u|\leq |u(x)-m_u(Q)|+|m_u(Q)-K_u|<a_{Q}+|m_u(Q)-K_u|. 
					\end{align}
					Let $\varepsilon>0$ be given. Then by \eqref{new result aj estimate} and Lemma~\ref{one length cube conv}, we choose $M\in\N$ sufficiently large so that the above gives $|u(x)-K_u|<\varepsilon$, whenever $x\in B(0, M)^c\setminus E$.
				\end{proof}
				
				\begin{remark}\label{bad_set and a_j} 
					We observe from the above proof that $u\in\dot{W}^{s,p}_q(\R^n)$ will approach $K_u$ at infinity along every curve in $\R^n$ which tends to infinity and hits the set $E$ only finitely many times. Also, since for a non-increasing sequence $a_j\to0$, the set $E= ~\bigcup_{j\in\N}\{x \in Q_j \ | \ |u(x)-m_u(Q_j)|>a_j\}$ has a very small $\mathcal{H}^{n-1}$ measure at infinity when $1<sp<n$, we have plenty of curves along which $u$ has a limit. 
				\end{remark}	
				
				\begin{lemma}\label{median convg Lp}
					Let $0<r<\infty$ and suppose $u-c\in L^r(\R^n)$ for a given $c\in\R$. Then, for every $\varepsilon>0$, there exists $M\in\N$ so that 
					\begin{equation}\label{uniform conv cube}
						|m_u(Q)-c|<\varepsilon
					\end{equation}
					whenever $Q$ is a cube with $Q\cap B^n(0,M)=\emptyset$ and $\ell(Q)=1$.
				\end{lemma}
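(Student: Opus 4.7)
The plan is to reduce the claim to the classical fact that an $L^r$ function has vanishing tail integrals, using the median comparison Lemma~\ref{med_another lemma}. Specifically, for any cube $Q$ with $\ell(Q)=1$, so that $|Q|=1$, Lemma~\ref{med_another lemma} produces a measurable subset $\tilde{Q}\subset Q$ with $|\tilde{Q}|\geq 1/2$ on which the pointwise bound $|m_u(Q)-c|\leq |u(x)-c|$ holds.

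Next I would raise both sides to the power $r$, integrate over $\tilde{Q}$, and use $|\tilde{Q}|\geq 1/2$ together with $\tilde{Q}\subset Q$ to obtain
\begin{equation}
|m_u(Q)-c|^r \;\leq\; \frac{1}{|\tilde{Q}|}\int_{\tilde{Q}}|u(x)-c|^r\,dx \;\leq\; 2\int_{Q}|u(x)-c|^r\,dx.
\end{equation}
Given $\varepsilon>0$, since $u-c\in L^r(\R^n)$ the absolute continuity of the Lebesgue integral yields an $M\in\N$ with
\begin{equation}
\int_{B^n(0,M)^c}|u(x)-c|^r\,dx \;<\; \frac{\varepsilon^r}{2}.
\end{equation}
For any cube $Q$ with $Q\cap B^n(0,M)=\emptyset$ we have $Q\subset B^n(0,M)^c$, so combining the two displays gives $|m_u(Q)-c|^r<\varepsilon^r$, hence $|m_u(Q)-c|<\varepsilon$ as desired.

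There is no serious obstacle here; the whole argument is essentially a one-line consequence of Lemma~\ref{med_another lemma} plus $L^r$ tail decay. The only minor point worth highlighting is that no regularity of $u$ beyond the $L^r$ assumption on $u-c$ is needed, and in particular the hypotheses on $s$, $p$, $q$ from the rest of the paper play no role in this lemma, which is why it is stated for arbitrary $r\in(0,\infty)$.
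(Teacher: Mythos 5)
Your proposal is correct and follows essentially the same route as the paper: apply Lemma~\ref{med_another lemma} to get the subset $\tilde{Q}$ with $|\tilde{Q}|\geq\frac{|Q|}{2}$, integrate the $r$-th power over $\tilde{Q}$ to obtain $|m_u(Q)-c|^r\leq 2\int_Q|u-c|^r\,dx$, and finish by choosing $M$ so that the tail integral $\int_{B^n(0,M)^c}|u-c|^r\,dx$ is below $\varepsilon^r/2$. The only cosmetic difference is your invocation of ``absolute continuity of the integral''; the relevant fact is simply that the tail integral of an $L^r$ function vanishes as $M\to\infty$, which is what the paper uses.
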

				\begin{proof}
					Fix $c\in\R$ with $u-c\in L^r(\R^n)$. Let $Q$ be a cube with $\ell(Q)=1$. Then by Lemma~\ref{med_another lemma} we get that 
					\begin{equation}
						|m_u(Q)-c|\leq |u(x)-c|
					\end{equation}
					for a set $\tilde{Q}\subset Q$ with $|\tilde{Q}|\geq \frac{|Q|}{2}$. Therefore we deduce that
					\begin{equation}\label{med and c}
						\int_{\tilde Q}|m_u(Q)-c|^r dx\leq \int_{\tilde Q}|u(x)-c|^rdx
						\leq \int_{Q}|u(x)-c|^rdx.
					\end{equation}
					Let $\varepsilon>0$ be given. Now, since $u-c\in L^r(\R^n)$, then for $\delta=\frac{\varepsilon^r}{2}$ there is $M\in\N$ so that 
					\begin{equation}\label{small norm}
						\int_{B(0,M)^c}|u-c|^r<\delta.
					\end{equation}
					Suppose $Q\subset B^n(0,M)^c$. Then, we obtain from \eqref{med and c} and \eqref{small norm} that \begin{equation}
						|m_u(Q)-c|\leq \left(2\int_{B^n(0,M)^c}|u(x)-c|^rdx\right)^\frac{1}{r}<\varepsilon.
					\end{equation}

				\end{proof}
				\begin{proof}[Proof of Corollary~\ref{Lp convg 0}]
					Lemma~\ref{median convg Lp} combined with Lemma~\ref{l3} gives the claim of Corollary~\ref{Lp convg 0}.  
				\end{proof}
				Now we will prove Corollary~\ref{only constant function}.
				\begin{proof}[Proof of Corollary~\ref{only constant function}]
					Let $u\in\dot{W}^{s,p}_q(\R^n)$ with $0<s<1$, $0<p<\infty$, $q\geq\frac{np}{n-sp}$ and $sp<n$. Let $K_{u^*}$ and $E$ be as in Theorem~\ref{ps-thin theorem}. Contrary to the claim, let us suppose that $u(x)\neq K_{u^*}$ on a set of positive volume. Then there exists a bounded set $A\subset\R^n$ such that $|A|>0$ and $|u-K_{u^*}|\geq\varepsilon$ in $A$.
					Choose $j_1,j_2\in\N $ such that $A\subset B(0,2^{j_1})$ and $|u-K_{u^*}|<\frac{\varepsilon}{2}$ in $B(0,2^{j_2})^c\setminus E$ respectively. Here the set $E$ is as in Theorem~\ref{ps-thin theorem}. Define $A_j=B(0,2^{j+1})\setminus B(0,2^{j})$. Since $\mathcal{H}^{\lambda}(E)<\infty$, especially $|E|<\infty$, we may choose a $j_3\in\N$ such that $|A_j|\geq2|E|$ for all $j\geq j_3$. Let $j_0=\max\{j_1,j_2,j_3\}$. Then we have 
					\begin{align}
						\int_{\R^n}\left(\int_{\R^n}\frac{|u(x)-u(y)|^q}{|x-y|^{n+sq}}dy\right)^\frac{p}{q}dx&\geq\int_{B(0,2^{j_0})^c\setminus E}\left(\int_{A}\frac{|u(x)-u(y)|^q}{|x-y|^{n+sq}}dy\right)^\frac{p}{q}dx\\
						&\geq\frac{\varepsilon^p}{2^p}\int_{B(0,2^{j_0})^c\setminus E}\left(\int_{A}\frac{1}{|x-y|^{n+sq}}dy\right)^\frac{p}{q}dx\\
					\end{align}
					Now since $|y|\leq |x|$ gives $|x-y|\leq|x|+|y|\leq2|x|$, we have
					\begin{align}
						\int_{\R^n}\left(\int_{\R^n}\frac{|u(x)-u(y)|^q}{|x-y|^{n+sq}}dy\right)^\frac{p}{q}dx&
						\geq\frac{\varepsilon^p}{2^{(n+sq+q)\frac{p}{q}}}\int_{B(0,2^{j_0})^c\setminus E}\left(\int_{A}\frac{1}{|x|^{n+sq}}dy\right)^\frac{p}{q}dx\\
						&\geq \frac{\varepsilon^p|A|^{\frac{p}{q}}}{2^{(n+sq+q)\frac{p}{q}}}\sum_{j=j_0}^\infty\int_{A_j\setminus E}\frac{1}{|x|^{(n+sq)\frac{p}{q}}}dx\\
						&\geq \frac{\varepsilon^p|A|^{\frac{p}{q}}}{2^{(n+sq+q)\frac{p}{q}}}\sum_{j=j_0}^\infty\frac{|A_j\setminus E|}{2^{j(n+sq+q)\frac{p}{q}}}\\
						&\geq \frac{\varepsilon^p|A|^{\frac{p}{q}}}{2^{(n+sq+q)\frac{p}{q}+1}}\sum_{j=j_0}^\infty \frac{|A_j|}{2^{j(n+sq)\frac{p}{q}}}\\
						&\geq \frac{\varepsilon^p|A|^{\frac{p}{q}}}{2^{(n+sq+q)\frac{p}{q}+1}}\sum_{j=j_0}^\infty 2^{j(n-(n+sq)\frac{p}{q})}=\infty
					\end{align}
					since $q\geq \frac{np}{n-sp}$. This is not possible since $u$ was assumed to be in the space $\dot{W}^{s,p}_q(\R^n)$. Hence the claim follows. 
				\end{proof}

				\begin{lemma}\label{haus-cap}
					Let $0<s<1$, $1<p<\infty$, $1\leq q<\frac{np}{n-sp}$ and $sp<n$. If $E\subset\R^n$ is a compact set with $\mathcal{H}^{n-sp}(E)<\infty$ and $\varepsilon>0$, there is $u\in W^{s,p}_q(\R^n)$ such that $u\geq1$ on $E$ and $\|u\|_{W^{s,p}_q(\R^n)}<\varepsilon$.
				\end{lemma}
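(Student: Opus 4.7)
The plan is to invoke the two capacity lemmas recorded just before the statement and read off the conclusion from the definition of $(s,p,q)$-Capacity. Since $E$ is compact and $\mathcal{H}^{n-sp}(E)<\infty$, Lemma~\ref{haus_cap_AdDa} (applied with $sp<n$ and $p>1$) gives that the Bessel capacity satisfies $\mathrm{C}_{s,p}(E)=0$. The hypotheses $1<p<\infty$, $sp<n$, and $1\leq q<\frac{np}{n-sp}$ are exactly those needed to apply Lemma~\ref{capacity comparison}, which yields
\begin{equation*}
\mathrm{Cap}_{s,p,q}(E)\leq M\,\mathrm{C}_{s,p}(E)=0.
\end{equation*}

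Given this, the conclusion is immediate from the definition of the $(s,p,q)$-Capacity. Recall that
\begin{equation*}
\mathrm{Cap}_{s,p,q}(E)=\inf\{\|v\|_{W^{s,p}_q(\R^n)}^{p}\mid v\in W^{s,p}_q(\R^n),\ v\geq 1\text{ on a neighbourhood of }E\}.
\end{equation*}
Since this infimum equals zero, for the prescribed $\varepsilon>0$ I can pick a competitor $u\in W^{s,p}_q(\R^n)$ with $u\geq 1$ on some open set containing $E$ and $\|u\|_{W^{s,p}_q(\R^n)}^{p}<\varepsilon^{p}$. In particular $u\geq 1$ on $E$ itself and $\|u\|_{W^{s,p}_q(\R^n)}<\varepsilon$, which is exactly the claim.

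There is essentially no obstacle in the proof; the content is entirely packed into the two quoted results from \cite{AdDa}. The only thing worth a brief comment is that the definition of $\mathrm{Cap}_{s,p,q}$ is stated for open sets in the preliminaries, so I would remark that for a compact $E$ the capacity is understood via approximation by open neighbourhoods (or, equivalently, the definition is applied to an arbitrary open set $U\supset E$ and then one takes the infimum), which is standard and consistent with the statement of Lemma~\ref{capacity comparison}. After that short remark, the argument is a one-line extraction from the definition of the infimum.
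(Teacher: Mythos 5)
Your proposal is correct and follows exactly the paper's argument: Lemma~\ref{haus_cap_AdDa} gives $\mathrm{C}_{s,p}(E)=0$, Lemma~\ref{capacity comparison} then gives $\mathrm{Cap}_{s,p,q}(E)=0$, and the test function is extracted from the infimum defining the capacity. Your added remarks (the $\varepsilon^p$ versus $\varepsilon$ adjustment and the extension of the capacity definition from open to compact sets) are minor points the paper passes over silently, and they do not change the substance.
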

				\begin{proof}
					Let $E$ be a compact set in $\R^n$ so that $\mathcal{H}^{n-sp}(E)<\infty$. Then by Lemma~\ref{haus_cap_AdDa}, we have $\mathrm{C}_{s,p}(E)=0$ and hence $\mathrm{Cap}_{s,p,q}(E)=0$ by Lemma~\ref{capacity comparison}. Thus for any given $\varepsilon>0$ there exist a $u_{\varepsilon}\in W^{s,p}_q(\R^n)$ such that $u_{\varepsilon}\geq1$ on a neighbourhood of $E$ and $\|u_{\varepsilon}\|_{W^{s,p}_q(\R^n)}<\varepsilon$.
				\end{proof}
				
				Next, we show that the Hausdorff estimate in Lemma~\ref{l3} is optimal, in the sense that we cannot control $\mathcal{H}^\lambda_\infty(E_t)$ for $\lambda=n-sp$. This is formally stated in Remark~\ref{rem.sharpness} below.
				\begin{proposition}\label{counterex-sp-1,n}
					Let  $0<s<1$, $1<p<\infty$, $1\leq q<\frac{np}{n-sp}$ and $sp<n$. Let $Q$ be a cube of edge length $\mathrm{1}$ in $\R^n$. Then for every $\varepsilon>0$ there exists a function $u\in W^{s,p}_q(\R^n)$ so that $\|u\|_{W^{s,p}_q(\R^n)}<\varepsilon$ and  $\sup_{t>0}t^p\mathcal{H}^{n-sp}_{\infty}(E_{t})\geq C$ for some fixed constant $C>0$, where $E_{t}=\{x\in Q \mid |u(x)-m_u(Q)|>t\}$.   
				\end{proposition}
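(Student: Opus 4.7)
The plan is to combine Lemma~\ref{haus-cap} with a Chebyshev estimate: a function forced to equal $1$ on a compact set $E$ of positive $\mathcal{H}^{n-sp}$-measure but having a small $W^{s,p}_q$-norm must automatically produce a super-level set of large Hausdorff content.

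I would first fix, once and for all, a compact $E\subset Q$ with $0<\mathcal{H}^{n-sp}(E)<\infty$, e.g.\ a self-similar Cantor set of dimension $n-sp$ sitting inside $Q$. By Lemma~\ref{lm-hausdorff-content}(i) the positivity of $\mathcal{H}^{n-sp}(E)$ upgrades to $c_0:=\mathcal{H}^{n-sp}_\infty(E)>0$, and this fixed quantity will determine the absolute constant $C$ appearing in the conclusion.

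Given $\varepsilon>0$, apply Lemma~\ref{haus-cap} with an auxiliary small parameter $\delta\leq\varepsilon$ to produce $u\in W^{s,p}_q(\R^n)$ with $u\geq 1$ on an open neighborhood $U$ of $E$ and $\|u\|_{W^{s,p}_q(\R^n)}<\delta$. I then truncate $u$ into $[0,1]$: since the truncation is a $1$-Lipschitz contraction one has $|\tilde u(x)-\tilde u(y)|\leq|u(x)-u(y)|$ pointwise, so neither the $L^p$-norm nor the $W^{s,p}_q$-seminorm grows, and $\tilde u\geq 1$ on $U$ persists. Using the median-based formula of Definition~\ref{goog-rep} on cubes $Q'\ni x\in E$ small enough to lie in $U$ gives $m_{\tilde u}(Q')\geq 1$, so the precise representative satisfies $\tilde u^*\geq 1$ at every point of $E$, not merely a.e.

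The remainder is a one-line Chebyshev argument. Since $\|\tilde u\|_{L^p(Q)}\leq\delta$,
\[
\bigl|\{x\in Q:\tilde u(x)\geq 1/2\}\bigr|\leq 2^p\delta^p,
\]
and choosing $\delta<2^{-1-1/p}$ makes this strictly less than $|Q|/2=1/2$, which forces $m_{\tilde u}(Q)\leq 1/2$; since $\tilde u\geq 0$ also $m_{\tilde u}(Q)\geq 0$. For $t=1/4$ we then get $E\subset E_t$, because each $x\in E$ satisfies $|\tilde u(x)-m_{\tilde u}(Q)|\geq 1-1/2>t$. Monotonicity of Hausdorff content yields
\[
\sup_{t>0}t^p\,\mathcal{H}^{n-sp}_\infty(E_t)\geq (1/4)^p\,c_0=:C,
\]
independent of $\delta$ and hence of $\varepsilon$. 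The only delicate bookkeeping point is confirming $\tilde u^*\geq 1$ pointwise on the null set $E$, which is exactly why the truncation is performed before passing to the precise representative; I do not expect a genuine obstacle beyond this, as everything else is a standard combination of capacity theory, truncation compatibility with fractional seminorms, and Chebyshev.
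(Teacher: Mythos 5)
Your argument is correct, but it takes a genuinely different route from the paper's. The paper fixes an $(n-sp)$-Ahlfors regular compact set $E\subset\frac12 Q$, takes a cutoff $\psi\in C_c^\infty(Q)$ with $\psi=1$ on $\frac12 Q$ and support in $\frac23 Q$, and sets $v=u_\varepsilon\psi$ with $u_\varepsilon$ from Lemma~\ref{haus-cap}; then $v$ vanishes on $Q\setminus\frac23 Q$ (content $\geq\frac13$) and equals $1$ on $E$ (content bounded below via Ahlfors regularity and a content-versus-measure comparison from Kinnunen--Lehrb\"ack), so \emph{wherever} the median $m_v(Q)$ sits, one of these two large-content sets lies at distance at least $\frac13$ from it --- no control of the median itself is needed. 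You instead pin the median down: the smallness of the $L^p$ part of $\|u\|_{W^{s,p}_q}$ plus Chebyshev forces $m_{\tilde u}(Q)\leq\frac12$ (this follows from the property $2|\{x\in Q:\tilde u\geq m_{\tilde u}(Q)\}|\geq|Q|$ recorded after the median definition), while truncation to $[0,1]$ keeps the norm small, preserves $\tilde u\equiv 1$ on a neighbourhood of $E$, and makes the precise representative equal to $1$ pointwise on $E$, so $E\subset E_{1/4}$. Your route is more elementary: it needs only the positivity of $\mathcal{H}^{n-sp}_\infty(E)$ for a fixed set of positive finite $\mathcal{H}^{n-sp}$-measure (via Lemma~\ref{lm-hausdorff-content}(i)), and dispenses with the cutoff, the Ahlfors regularity, and the auxiliary lemma from \cite{KiLe}; what it gives up is independence from the $L^p$ term --- the paper's two-set argument would survive even if only the seminorm were small, which is in the spirit of Remark~\ref{rem.sharpness} about the homogeneous estimate of Lemma~\ref{l3}, whereas your Chebyshev step genuinely uses $\|u\|_{L^p}<\delta$. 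Both proofs produce a constant $C$ independent of $\varepsilon$, so your proposal establishes the statement as formulated.
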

				\begin{proof}
					Let $E\subset \frac{1}{2}Q$ be compact and $(n-sp)$-Ahlfors regular with $\mathcal{H}^{n-sp}(E)\equiv\frac{1}{2^{n-sp}}$. For construction of such sets see \cite{Mattila}. Let $\psi\in C_c^{\infty}(Q)$ be such that $\psi=1$ on $\frac{1}{2}Q$ and $\mathrm{supp}(\psi)\subset \frac{2}{3}Q$. Then, $\mathcal{H}_{\infty}^{n-sp}(Q\setminus\text{supp}(\psi))\geq \mathcal{H}^{n-sp}_\infty(Q\setminus\frac{2}{3}Q)
					\geq \frac{1}{3}$. Let $\varepsilon>0$ be given. Then by Lemma~\ref{haus-cap} we find $u_{\varepsilon}\in W^{s,p}_q(\R^n)$ such that $u_{\varepsilon}\geq1$ on a neighbourhood of $E$ and $\|u_{\varepsilon}\|_{W^{s,p}_q(\R^n)}<\frac{\varepsilon}{\|\psi\|_{W^{s,p}_q(\R^n)}}$. Thus, for $v=u_{\varepsilon}\psi$, we have
					\begin{align}\label{4.2.1}
						\mathcal{H}^{n-sp}_{\infty}(\{x\in Q\mid v(x)=0\})&\geq\mathcal{H}^{n-sp}_{\infty}(Q\setminus\text{supp}(u_{\varepsilon}\psi))
						\geq \frac{1}{3}
					\end{align}
					and by \cite[Lemma~7.8]{KiLe} we have
					\begin{align}\label{4.2.2}
						\mathcal{H}^{n-sp}_{\infty}(\{x\in Q\mid v(x)=1\})&\geq\mathcal{H}^{n-sp}_{\infty}(E)\geq C(C_A,n,s,p)\frac{1}{2^{n-sp}}.
					\end{align}
					where $C_1$ is the Ahlfors-regularity constant of $E$. Therefore, by the above two estimates, we have
					\begin{align}
						\frac{1}{3^p}\mathcal{H}^{n-sp}_{\infty}(\{x\in Q\mid |v(x)-m_v(Q)|>\frac{1}{3}\})&\geq C
					\end{align}
					where $C=\min\{C(C_A,n,s,p)\frac{1}{2^{n-sp}},C(p)\frac{1}{3}\}$. Hence the claim follows.
				\end{proof}
				\begin{remark}\label{rem.sharpness} If we let $\lambda=n-sp$ in Lemma~\ref{l3}, Proposition \ref{counterex-sp-1,n} implies that there exists $v\in\dot{W}^{s,p}_q(\R^n)$ such that the left-hand side of \eqref{haus-cube} remains bounded away from zero while the right-hand side becomes arbitrarily small.
				\end{remark}
				\begin{remark}
					The above lemma does not cover the case $q<1<p$ as the capacity results for $q<1$ are not known. For $p\leq1$ we have $sp<1$, and then the case $q<1$ is contained in Proposition~\ref{sp leq 1 lemma}.
				\end{remark}
				
				\bigskip
				
				\section{The case of radial limits}\label{radial section}

				
				\begin{proof}[\textbf{Proof of Corollary~\ref{rad coro}}]
					Without loss of generality, we assume $$
					\mathbb{S}^{n-k}= \{x\in \R^{n-k}\times \{0\}^k \mid |x|=1\}.
					$$

					\textbf{Step-1:} Construction of \textit{the set $F$ of bad parameters}.\\
					
					Consider $E\subset \R^n$, mentioned in Remark~\ref{bad_set and a_j}. Set $E_j:=E\setminus B(0,j)$ for all $j\in \N$.

					Consider the orthogonal projection
					$$
					\pi: \R^n\to \mathbb{S}^{n-k}
					$$
					given by 
					$$
					\pi((x_1,\cdots,x_n))= \frac{(x_1,\cdots,x_{n-k+1},0,\cdots,0)}{|(x_1,\cdots,x_{n-k+1},0,\cdots,0)|}.
					$$
					Note that $\pi(P_z^k)=\{z\}$. Define
					$$
					F:=\bigcap_{j=1}^\infty \pi(E_j).
					$$
					
					\smallskip
					
					\textbf{Step-2:} Analysis of the complement of $F$ inside $\mathbb{S}^{n-k}$.\\
					
					Let $z\in \mathbb{S}^{n-k} \setminus F$ and $\varepsilon>0$ be chosen arbitrarily. Since $a_i\to 0$, let $i_0\in \N$ be such that $i>i_0$ implies $a_i<\varepsilon$. Now, we have
					$$
					z\in \bigcup_{j=1}^\infty \left(\mathbb{S}^{n-k} \setminus \pi(E_j)\right).
					$$
					That is, there is some $j_0\in \N$ such that $z\in \mathbb{S}^{n-k} \setminus \pi(E_{j_0})$, consequently, for every $j\geq j_0$, 
					$$
					\left(P_z^k\setminus B(0,j)\right)\cap E=P_z^k\cap E_{j}=\emptyset.
					$$
					
					\begin{itemize}
						\item Choose $M_1>0$ such that $B(z,M_1) \supset B(0,j_0)$.
						
						\item By Lemma~\ref{one length cube conv}, choose $M_2>0$ such that  and $Q\subset \R^n \setminus B(z,M_2)$ with $\ell(Q)=1$ implies
						\begin{equation}\label{eq1}
							|m_u(Q)-K_u|<\frac{\varepsilon}{2}.
						\end{equation}
						
						\item Choose $M_3>0$ such that $Q_i\subset \R^n\setminus B(0,M_3)$ implies $i>i_0$. Consequently, $Q_i\subset \R^n\setminus B(0,M_3)$ implies $a_i<\varepsilon$.
						
						\item Choose $M=2 \max\{ M_1,M_2,M_3 \}$.
					\end{itemize}
					Choose $\xi\in P_z^k\setminus B(z,M) \subset P_z^k\setminus B(0,j_0)\subset \R^n\setminus E$. Then, for some $i(\xi)\in \N$, $\xi\in Q_{i(\xi)}\subset \R^n\setminus B(0,M_2)$. Also note that $a_{i(\xi)}<\frac{\varepsilon}{2}$. Hence, we have
					$$
					|u(\xi)- K_u|
					\leq |m_u(Q_{i(\xi)})-K_u|+|u(\xi)-m_u(Q_{i(\xi)})|<
					|m_u(Q_{i(\xi)})- K_u|+a_{i(\xi)}<\varepsilon.
					$$

					\textbf{Step-3:} The Hausdorff content estimate of $E$.\\

					Let $\lambda>n-sp$. Theorem~\ref{ps-thin theorem} together with Definition~\ref{ps-thin}, gives
					\begin{align}
						\mathcal{H}^{\lambda}_{\infty}(F)
						\leq\limsup_{j\to\infty}\mathcal{H}^{\lambda}_{\infty}(\pi(E_j))
						\leq \limsup_{j\to\infty} \mathcal{H}^{\lambda}_{\infty}(E_j)\leq
						\limsup_{j\to\infty} \mathcal{H}^{\lambda}_{\infty}(E\setminus B(0,j))
						=0.
					\end{align}
					Now, since $\lambda>n-sp$ and $sp>k$, we have $\mathcal{H}^{n-k}(F)=0$.
				\end{proof}	
				\bigskip
				\section{The case of vertical limits}\label{vertical section}


				\begin{proof}[\textbf{Proof of Corollary~\ref{ver coro}}]
					
					Without loss of generality let us assume that 
					$$
					V^{n-k}=\R^{n-k}\times\{0\}^k.
					$$
					\smallskip
					
					\textbf{Step-1:} Construction of \textit{the set $\mathcal{S}$ of bad parameters}.\\
					
					Consider $E\subset \R^n$, mentioned in Remark~\ref{bad_set and a_j}. Set $E_j:=E\setminus B(0,j)$ for all $j\in \N$.
					Consider the orthogonal projection 
					$$
					\pi: \R^n\to \R^{n-k}
					$$
					defined by $\pi((x_1,\cdots,x_n))= (x_1,\cdots,x_{n-k})$. Note that $\pi(P_z^k)=\{z\}$. Define
					$$
					\mathcal{S}:= \left(\bigcap_{j\geq1} \pi(E_{j})\right)\times\{0\}^k.
					$$
					\smallskip
					
					\textbf{Step-2:} Analysis of the complement of $\mathcal{S}$ inside $\R^{n-k}\times \{0\}^k$.\\
					
					Let, for some $x_1\in \R^{n-k}$,  $(x_1,0)\in (\R^{n-k}\times \{0\}^k) \setminus\mathcal{S}$.  Then
					$$
					x_1\in \R^{n-k}\setminus \bigcap_{j\geq1} \pi(E_{j})
					=\bigcup_{j\geq1}  \left(\R^{n-k}\setminus \pi(E_j)\right).
					$$
					That is, there is some $j_0\in \N$ such that for every $j\geq j_0$, we have $x_1\in \left(\R^{n-k}\setminus \pi(E_j)\right)$. In other words, there is some $j_0\in \N$ such that for every $j>j_0$, and every $x_2\in \R^k$, $x:=(x_1,x_2)\notin E_j$.

					Let $\varepsilon>0$ be chosen arbitrarily. 
					
					Since $a_j\to 0$, we may choose $j_1> j_0$ such that $a_j<\frac{\varepsilon}{2}$ for $j>j_1$. Choose $L>0$ be so large that $Q_j\cap \left(\R^n \setminus B((x_1,0),L)\right) \neq \emptyset$ implies $|m_u(Q_j)- K_u|<\frac{\varepsilon}{2}$ (follows from Lemma~\ref{one length cube conv}) and $j>j_1$.
					
					Choose an arbitrary point $(x_1,y)$ of $P_{x_1}^k$ with $|y|>L$. Assume $(x_1,y)\in Q_j$; so that $j>j_1>j_0$, consequently $(x_1,y)\notin E_j$. So, we have shown that $|y|>L$ and $(x_1,y)\in Q_j$ implies 
					$$
					|u(x_1,y)-m_u(Q_j)|<a_j.
					$$

					We use the previous estimates to conclude that
					$$
					|u(x_1,y)-K_u|
					\leq |m_u(Q_j)-K_u|+|u(x_1,y)-m_u(Q_j)|<
					|m_u(Q_j)-K_u|+a_j<\varepsilon
					$$
					whenever $|y|>L$. 
					\smallskip

					\textbf{Step-3:} The Hausdorff content estimate of $E$.\\

					Let $\lambda>n-sp$. Theorem~\ref{ps-thin theorem}, together with Definition~\ref{ps-thin}, gives
					\begin{align}
						\mathcal{H}^{\lambda}_{\infty}(\mathcal{S})\leq\limsup_{j\to\infty}\mathcal{H}^{\lambda}_{\infty}(\pi(E_{j}))
						\leq \limsup_{j\to\infty}\mathcal{H}^{\lambda}_{\infty}(E_j)=0.
					\end{align}
					Thus for $\lambda>n-sp$ and $sp>k$, we have $\mathcal{H}^{n-k}(\mathcal{S})=0$.
					
					
				\end{proof}

			\bibliographystyle{plain}
			\bibliography{1st.bib}
			
			\setlength{\parindent}{0pt}

		\end{document}